   \providecommand{\fg}{\ifdim\lastskip>\z@\unskip\fi~\frqq}%
\providecommand{\tabularnewline}{\\}
\numberwithin{equation}{section}
\numberwithin{figure}{section}
\numberwithin{table}{section}
  \theoremstyle{plain}
  \newtheorem{thm}{\protect\theoremname}[section]
  \theoremstyle{plain}
  \newtheorem{lem}{\protect\lemmaname}[section]
  \theoremstyle{plain}
  \newtheorem{prop}{\protect\propositionname}[section]
\def\esssup_#1{\underset{#1}{\mathrm{ess\,sup\, }}}
\let\myTOC\tableofcontents
\renewcommand\tableofcontents{%
  \frontmatter
  \pdfbookmark[1]{\contentsname}{}
  \myTOC
  \mainmatter }
\def\LyX{\texorpdfstring{%
  L\kern-.1667em\lower.25em\hbox{Y}\kern-.125emX\@}
  {LyX}}
\setlist{leftmargin=*, topsep=0.5em, parsep=0pt, itemsep=1em, labelindent=0pt, align=left}
  \providecommand{\lemmaname}{Lemma}
  \providecommand{\propositionname}{Proposition}
\providecommand{\theoremname}{Theorem}
\begin{document}

\title{A numerical algorithm for fully nonlinear HJB equations:\\
an approach by control randomization}


\author{Idris Kharroubi\footnote{The research of the author benefited from the support of the French ANR research grant LIQUIRISK (ANR-11-JS01-0007).}\\ \footnotesize{CEREMADE, CNRS UMR 7534}, \\\footnotesize{Universit\'e Paris Dauphine}\\ \footnotesize{and CREST,} \\
\footnotesize{ \texttt{kharroubi at  ceremade.dauphine.fr}} 
  \and Nicolas Langren\'e\\ \footnotesize{Laboratoire de Probabilit\'es et Mod\`eles Al\'eatoires,}\\ \footnotesize{Universit\'e Paris Diderot}\\ \footnotesize{ and EDF R\&D}\\\footnotesize{\texttt{langrene at math.univ-paris-diderot.fr}}
   \and Huy\^en Pham\\ \footnotesize{Laboratoire de Probabilit\'es et Mod\`eles Al\'eatoires,}\\ \footnotesize{Universit\'e Paris Diderot}\\ \footnotesize{and CREST-ENSAE}\\\footnotesize{\texttt{pham  at math.univ-paris-diderot.fr}}
}

\maketitle
\begin{abstract}
We propose a probabilistic numerical algorithm  to solve Backward Stochastic
Differential Equations (BSDEs) with nonnegative jumps, a class of BSDEs introduced in \cite{Kharroubi12} for representing fully nonlinear
HJB equations. In particular, this allows us  to numerically solve stochastic
control problems with controlled volatility, possibly degenerate.  Our backward scheme, based on least-squares regressions, takes advantage of high-dimensional properties of Monte-Carlo methods, and also provides a parametric estimate in feedback form for the optimal control. A partial analysis of the error of the scheme is provided, as well as numerical tests on the
problem of superreplication of option with uncertain volatilities and/or correlations, including a detailed comparison with the numerical
results from the alternative scheme proposed in \cite{Guyon11}.
\end{abstract}

\vspace{5mm}

\noindent {\bf Key words:}  Backward stochastic differential equations, control randomization, HJB equation, uncertain volatility, empirical regressions, Monte-Carlo.

\vspace{5mm}

\noindent {\bf MSC Classification:}  60H10, 65Cxx, 93E20.

\newpage

\section{Introduction}

Consider the following general Hamilton-Jacobi-Bellman (HJB) equation:

\begin{align}
\frac{\partial v}{\partial t}+\sup_{a\in A}\left\{ b\left(x,a\right).D_{x}v+\frac{1}{2}\mathrm{tr}\left(\sigma\sigma^{\top}\left(x,a)\right)D_{x}^{2}v\right)+f\left(x,a,v,\sigma^{\top}\left(x,a\right).D_{x}v\right)\right\}  & =0\,\,,\,\left(t,x\right)\in\left[0,T\right)\times\mathbb{R}^{d}\label{eq:HJB}\\
v\left(T,x\right) & =g\left(x\right)\,,\, x\in\mathbb{R}^{d}\nonumber 
\end{align}

where $A$ is a bounded subset of $\mathbb{R}^{q}$. It is well known
that the HJB equation \eqref{eq:HJB} is the dynamic programming equation
for the following stochastic control problem:
\begin{align}
v\left(t,x\right) & =\sup_{\alpha\in\mathcal{A}}\mathbb{E}^{t,x}\left[\int_{t}^{T}f\left(X_{s}^{\alpha},\alpha_{s}\right)ds+g\left(X_{T}^{\alpha}\right)\right]\label{eq:ControlPb}\\
dX_{s}^{\alpha} & =b\left(X_{s}^{\alpha},\alpha_{s}\right)ds+\sigma\left(X_{s}^{\alpha},\alpha_{s}\right)dW_{s}\nonumber 
\end{align}

Moreover, it is proved in \cite{Kharroubi12} that this HJB equation
admits a probabilistic representation by means of a BSDE with nonpositive
jumps. We recall below this construction.

Introduce a Poisson random measure $\mu_{A}\left(dt,da\right)$ on
$\mathbb{R}_{+}\times A$ with finite intensity measure $\lambda_{A}\left(da\right)dt$
associated to the marked point process $\left(\tau_{i},\zeta_{i}\right)_{i}$,
independent of $W$, and consider the pure jump process $\left(I_{t}\right)_{t}$,
valued in $A$, defined as follows:
\[
I_{t}=\zeta_{i}\,,\,\,\tau_{i}\leq t<\tau_{i+1}\, ,
\]
and interpreted as a randomization of the control process $\alpha$. 

Next, consider the uncontrolled forward regime switching diffusion
process
\[
dX_{s}=b\left(X_{s},I_{s}\right)ds+\sigma\left(X_{s},I_{s}\right)dW_{s}\,.
\]
Observe that the pair process $\left(X,I\right)$ is Markov. Now,
consider the following BSDE with jumps w.r.t. the Brownian-Poisson
filtration $\mathbb{F}=\mathbb{F}^{W,\mu_{A}}=\left(\mathcal{F}_{t}\right)_{0\leq t\leq T}$.

\begin{equation}
Y_{t}=g\left(X_{T}\right)+\int_{t}^{T}f\left(X_{s},I_{s},Y_{s},Z_{s}\right)ds-\int_{t}^{T}Z_{s}dW_{s}-\int_{t}^{T}\int_{A}U_{s}\left(a\right)\tilde{\mu}_{A}\left(ds,da\right)\label{eq:unconstrainedBSDE}
\end{equation}
where $\tilde{\mu}_{A}$ is the compensated measure of $\mu_{A}$.

Finally, we constrain the jump component of the BSDE \eqref{eq:unconstrainedBSDE}
to be nonpositive, i.e.
\[
U_{t}\left(a\right)\leq0,\,\, d\mathbb{P}\otimes dt\otimes\lambda\left(da\right)\, a.e.
\]

We denote by $\bar A$ $>$ $0$ an upper bound for the compact set $A$ of $\mathbb{R}^q$, i.e. $|a|$ $\leq$ $\bar A$ for all $a$ $\in$ $A$, and we make the standing assumptions:
\begin{enumerate}
\item The functions $b$ and $\sigma$ are Lipschitz: there exists  $L_{b,\sigma}$ $>$ $0$ s.t.
\begin{eqnarray*}
|b(x_1,a_1) - b(x_2,a_2)| +  |\sigma(x_1,a_1) - \sigma(x_2,a_2)|  & \leq & L_{b,\sigma}\big(|x_1-x_2| + |a_1-a_2| \big), 
\end{eqnarray*}
for all $x_1,x_2$ $\in$ $\mathbb{R}^d$, $a_1,a_2$ $\in$ $A$. 
\item The functions $f$ and $g$ are Lipschitz continuous: there exists
$L_{g},L_{f}>0$ s.t. 
\begin{eqnarray*}
\left|g\left(x_{1}\right)-g\left(x_{2}\right)\right| & \leq & L_{g}\left|x_{1}-x_{2}\right|\\
\left|f\left(x_{1},a_{1},y_{1},z_{1}\right)-f\left(x_{2},a_{2},y_{2},z_{2}\right)\right| & \leq & L_{f}\left(\left|x_{1}-x_{2}\right|+\left|a_{1}-a_{2}\right|+\left|y_{1}-y_{2}\right|+\left|z_{1}-z_{2}\right|\right),
\end{eqnarray*}
for all $x_1,x_2$ $\in$ $\mathbb{R}^d$, $a_1,a_2$ $\in$ $A$. 
\end{enumerate}
Under these conditions, we consider the minimal solution $\left(Y,Z,U,K\right)$
of the following constrained BSDE:
\begin{eqnarray}
Y_{t} & = & g\left(X_{T}\right)+\int_{t}^{T}f\left(X_{s},I_{s},Y_{s},Z_{s}\right)ds-\int_{t}^{T}Z_{s}dW_{s}\label{eq:constrained-BSDE}\\
 &  & +K_{T}-K_{t}-\int_{t}^{T}\int_{A}U_{s}\left(a\right)\tilde{\mu}_{A}\left(ds,da\right)\,,\,\,0\leq t\leq T\,,\, a.s. \nonumber 
\end{eqnarray}
subject to the constraint
\begin{equation}
U_{t}\left(a\right)\leq0\,,\,\, d\mathbb{P}\otimes dt\otimes\lambda\left(da\right)\, a.e.\,\mathrm{on}\,\Omega\times\left[0,T\right]\times A\label{eq:constraint}
\end{equation}

By the Markov property of $\left(X_{t},I_{t}\right)$, there exists
a deterministic function $y=y\left(t,x,a\right)$ such that the minimal
solution to \eqref{eq:constrained-BSDE}-\eqref{eq:constraint} satisfies $Y_{t}=y\left(t,X_{t},I_{t}\right)$,
$0\leq t\leq T$.
\begin{thm}
\label{Th:HJB}\cite{Kharroubi12} $y=y\left(t,x,a\right)$ does not
depend on $a$: $y=y\left(t,x\right)$, and is a viscosity solution
of the HJB equation \eqref{eq:HJB}:
\begin{align*}
\frac{\partial y}{\partial t}+\sup_{a\in A}\left\{ b\left(x,a\right).D_{x}y\left(t,x\right)+\frac{1}{2}\mathrm{tr}\left(\sigma\sigma^{\top}\!\!\left(x,a\right)D_{x}^{2}v\left(t,x\right)\right)+f\left(x,a,y,\sigma^{\top}\!\!\left(x,a\right)D_{x}y\right)\right\}  & =0\\
\left(t,x\right)\in\left[0,T\right)\times\mathbb{R}^{d}\hspace{90mm}v\left(T,x\right) & =g\left(x\right),x\!\in\!\mathbb{R}^{d}
\end{align*}

\end{thm}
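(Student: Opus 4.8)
The plan is to establish the claim in two stages: first, the \emph{a}-independence of $y$, and second, the fact that the resulting function solves the HJB equation \eqref{eq:HJB}. For the first stage, I would exploit the jump constraint \eqref{eq:constraint} together with the arbitrariness of the jumps of $I$. Heuristically, whenever the process $I$ jumps from $a$ to $a'$ at a time $t$, the value $Y_{t}$ jumps by $U_{t}(a') = y(t,X_{t},a') - y(t,X_{t-},a)$, and since $X$ does not jump this reads $y(t,X_{t},a') - y(t,X_{t},a)$; the constraint forces this to be $\leq 0$ for $\lambda_{A}$-almost every $a'$. Since the intensity $\lambda_{A}(da)$ charges all of $A$ (being a finite measure with full support, which one should assume or which follows from the construction) and since the roles of $a$ and $a'$ can be interchanged by considering different jump configurations, one gets $y(t,x,a') \leq y(t,x,a)$ and $y(t,x,a) \leq y(t,x,a')$ for all $a,a' \in A$, whence $y(t,x,a) = y(t,x)$ does not depend on $a$. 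Making this rigorous requires a dynamic-programming / comparison argument rather than a pathwise one: one shows that the minimal solution, viewed through its deterministic representative, inherits the monotonicity in $a$ from the constraint, and then symmetrizes.

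For the second stage, I would use the nonlinear Feynman--Kac correspondence between the constrained BSDE \eqref{eq:constrained-BSDE}--\eqref{eq:constraint} and an associated variational inequality. The standard route (as in \cite{Kharroubi12}) is to first identify $y$ as the (minimal) viscosity solution of the PDE with an obstacle/gradient-constraint term,
\begin{equation*}
\min\Bigl\{ -\tfrac{\partial y}{\partial t} - \mathcal{L}^{a} y - f(x,a,y,\sigma^{\top}(x,a)D_{x}y)\ ,\ -\partial_{a} y \Bigr\} = 0,
\end{equation*}
where $\mathcal{L}^{a}$ is the generator of $X$ with the regime frozen at $a$; the term $-\partial_{a}y$ (understood in the viscosity sense) encodes the jump constraint $U \leq 0$. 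Once one knows from the first stage that $y$ is independent of $a$, the constraint term is automatically satisfied, and the remaining inequality $-\partial_{t}y - \mathcal{L}^{a}y - f \geq 0$ holding for \emph{every} $a \in A$ is exactly the statement $-\partial_{t}y - \sup_{a}\{\mathcal{L}^{a}y + f\} \geq 0$, i.e. the supersolution property for \eqref{eq:HJB}. The matching subsolution inequality $-\partial_{t}y - \sup_{a}\{\mathcal{L}^{a}y + f\} \leq 0$ is the more delicate half: it comes from the \emph{minimality} of the solution of the constrained BSDE — if the $\sup$-inequality failed to be $\leq 0$ at some point, one could construct a strictly smaller supersolution, contradicting minimality. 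This is the classical penalization argument: approximate the constrained BSDE by penalized BSDEs $Y^{n}$ with driver $f + n\int_{A}(U^{n}_{s}(a))^{+}\lambda_{A}(da)$, pass to the limit $Y^{n} \uparrow Y$, and track the corresponding PDEs $y^{n} \to y$ via stability of viscosity solutions, noting that the penalization term in the limit kills the $a$-dependence while selecting the supremum over $a$.

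The terminal condition $y(T,x) = g(X_{T})$ transfers directly from the BSDE terminal condition $Y_{T} = g(X_{T})$ and continuity of $y$ up to $T$ (one should check there is no boundary layer at $T$, which follows because the constraint is compatible with $g$ being $a$-independent).

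I expect the \textbf{main obstacle} to be the rigorous proof that $y$ does not depend on $a$: the pathwise heuristic using the jumps of $I$ is suggestive but not a proof, since $U$ is only defined $d\mathbb{P}\otimes dt\otimes\lambda_{A}(da)$-a.e. and the jump times of $\mu_{A}$ form a null set in time. The correct argument, carried out in \cite{Kharroubi12}, is to combine (i) a comparison theorem for the penalized BSDEs showing $y^{n}(t,x,a)$ is monotone in $a$ along the direction of the randomization, with (ii) an identification of the limit of the penalty term, which forces the $a$-derivative to vanish; alternatively one argues directly that both $y(t,x,a)$ and $\inf_{a'} y(t,x,a')$ solve the same minimal-solution problem. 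A secondary technical point is ensuring enough regularity/continuity of $y$ to even state the viscosity inequalities and to pass to limits — this rests on uniform Lipschitz estimates for $Y^{n}$ in $(t,x)$ coming from the standing Lipschitz assumptions on $b,\sigma,f,g$, uniformly in $n$, which is where the boundedness of $A$ (the bound $\bar{A}$) is used.
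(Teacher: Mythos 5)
The paper does not actually prove Theorem \ref{Th:HJB}: it is quoted verbatim from \cite{Kharroubi12} and used as a black box, so there is no in-paper proof to compare against. Judged against the argument in the cited reference, your outline captures the right architecture --- penalized BSDEs $Y^{n}$ with penalty $n\int_{A}(U^{n}_{s}(a))^{+}\lambda_{A}(da)$ converging monotonically to the minimal solution, the supersolution half of the HJB inequality coming from the constraint holding for every frozen $a$, and the subsolution half coming from minimality --- and you correctly flag that the pathwise ``jump of $I$'' heuristic is not a proof because the constraint is only a $d\mathbb{P}\otimes dt\otimes\lambda_{A}(da)$-a.e. statement.

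The one place where your sketch diverges materially from what \cite{Kharroubi12} actually does is the mechanism for the $a$-independence. It is not obtained by a comparison theorem showing monotonicity of $y^{n}$ in $a$, nor by comparing $y$ with $\inf_{a'}y(\cdot,\cdot,a')$. Instead, the key tool is a \emph{dual representation} of the penalized and minimal solutions as an essential supremum $Y_{t}=\esssup_{\nu}\mathbb{E}^{\nu}[\,\cdots]$ over bounded predictable changes $\nu$ of the intensity of $\mu_{A}$ (a randomized/weak formulation of the control problem). Under such a change of measure one can force $I$ to jump from $a$ to any $a'\in A$ arbitrarily fast, and the Lipschitz estimates (uniform in $\nu$, using the boundedness $|a|\leq\bar{A}$) show this costs nothing in the limit; hence $y(t,x,a)\geq y(t,x,a')$ for all $a,a'$, and by symmetry equality. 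Your stage-two description (identification with the variational inequality $\min\{-\partial_{t}y-\mathcal{L}^{a}y-f,\,-$constraint$\}=0$, then stripping the constraint once $a$-independence is known, with minimality delivering the subsolution inequality) is consistent with the reference. So: correct strategy in outline, with the genuine gap being that the $a$-independence requires the dual representation, which your proposal does not supply and which is the technical heart of the cited proof.
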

Now, the aim of this paper is to provide a numerical scheme for computing
an approximation of the solution of the constrained BSDE \eqref{eq:constrained-BSDE}-\eqref{eq:constraint}.
In light of Theorem \ref{Th:HJB}, this will provide an approximation
of the solution of the general HJB equation \eqref{eq:HJB}, which
encompasses  stochastic control problems such  as the one described
in equation \eqref{eq:ControlPb}, ie. problems where both the drift
and the volatility of the underlying diffusion can be controlled, including degenerate diffusion coefficient.

The outline of the subsequent sections is the following.

First, Section \ref{sec:Scheme} describes our scheme. We start from a time-discretization of the problem,  proposed in \cite{Kharroubi13}, which 
gives rise to a  backward scheme involving the simulation of the forward regime switching process $(X,I)$, hence taking advantage of high-dimensional 
properties of Monte-Carlo methods. The final step towards an implementable scheme is to approximate the conditional
expectations that arise from this scheme. Here we  use  empirical
least-squares regression, as this method provides a parametric estimate
in feedback form of the optimal control. A partial analysis of the
impact of this approximation is provided, and the remaining obstacles
towards a full analysis are highlighted.

Then, Section \ref{sec:Applications} is devoted to numerical tests
of the scheme on various examples. The major application that takes
advantage of the possibilities of our scheme is the problem of pricing
and hedging contingent claims under uncertain volatility and (for
multi-dimensional claims) correlation. Therefore most of this section
is devoted to this specific application. To our knowledge, the only
other Monte Carlo scheme for HJB equations that can handle continuous
controls as well as controlled volatility is described in \cite{Guyon11},
where they make use of another generalization of BSDEs, namely second-order
BSDEs. Therefore we compare the performance of our scheme to the results
provided in their paper.

Finally, Section \ref{sec:Conclusion} concludes the paper.

\section{Regression scheme\label{sec:Scheme}}

Define a deterministic time grid $\text{\ensuremath{\pi}:=\ensuremath{\left\{  0=t_{0}<\ldots<t_{N}=T\right\} } }$
for the interval $[0,T]$, with mesh $\left|\pi\right|:=\max_{0\leq i<N}\Delta_{i}$
where $\Delta_{i}:=t_{i+1}-t_{i}$. Denote by $\mathbb{E}_{i}\left[.\right]:=\mathbb{E}\left[.\left|\mathcal{F}_{t_{i}}\right.\right]=\mathbb{E}\left[.\left|X_{i},I_{i}\right.\right]$.
The discretization of the constrained BSDE \eqref{eq:constrained-BSDE}-\eqref{eq:constraint} 
can be written as follows:

\begin{equation}
\begin{cases}
Y_{N} & =g\left(X_{N}\right)\\
\Delta_{i}\mathcal{Z}_{i} & =\mathbb{E}_{i}\left[Y_{i+1}\Delta W_{i}^{\top}\right]\\
\mathcal{Y}_{i} & =\mathbb{E}_{i}\left[Y_{i+1}+f\left(X_{i},I_{i},Y_{i+1},\mathcal{Z}_{i}\right)\Delta_{i}\right]\\
Y_{i} & =\esssup_{a\in A}\mathbb{E}_{i,a}\left[\mathcal{Y}_{i}\right]
\end{cases}\label{eq:discreteBSDE}
\end{equation}

where $\mathbb{E}_{i,a}\left[.\right]:=\mathbb{E}\left[.\left|X_{i},I_{i},I_{i}=a\right.\right]=\mathbb{E}\left[.\left|X_{i},I_{i}=a\right.\right]$.

First, remark that, from the Markov property of $\left(X_{i},I_{i}\right)_{1\leq i\leq N}$,
there exist deterministic functions $\tilde{y}_{i}$ and $\tilde{z}_{i}$
such that $\left(\mathcal{Y}_{i},\mathcal{Z}_{i}\right)=\left(\tilde{y}_{i}\left(X_{i},I_{i}\right),\tilde{z}_{i}\left(X_{i},I_{i}\right)\right)$.
Hence $\mathcal{Y}_{i}$ and $\mathcal{Z}_{i}$ can be seen as intermediate
quantities towards the discrete-time approximation of the BSDE \eqref{eq:constrained-BSDE}-\eqref{eq:constraint} 
$\left(Y_{i},Z_{i}\right)=\left(y_{i}\left(X_{i}\right),z_{i}\left(X_{i}\right)\right)$
, which do not depend on $I_{i}$.

Formally, the jump constraint \eqref{eq:constraint} states that $\tilde{y}_{i}\left(X_{i},a\right)-y_{i}\left(X_{i}\right)=U_{t}\left(a\right)\leq0$
a.s., meaning that the minimal solution satisfies $Y_{i}=y_{i}\left(X_{i}\right)=\esssup_{a\in A}\tilde{y}_{i}\left(X_{i},a\right)=\esssup_{a\in A}\mathbb{E}_{i,a}\left[\mathcal{Y}_{i}\right]$. 

Moreover, one can extract $Z_{i}$ from the scheme if needed. Indeed,
denoting $a^{*}=\arg\esssup_{a\in A}\mathbb{E}_{i,a}\left[\mathcal{Y}_{i}\right]$,
i.e. $Y_{i}=\mathbb{E}_{i,a^{*}}\left[\mathcal{Y}_{i}\right]$, then
$Z_{i}=z_{i}\left(X_{i}\right)=\tilde{z}_{i}\left(X_{i},a^{*}\right)$. 

Finally remark that the numerical scheme \eqref{eq:discreteBSDE}
is explicit, as we choose to define $\mathcal{Y}_{i}$ as a function
of $Y_{i+1}$ and not of $Y_{i}$.

The convergence of the solution of the discretized scheme \eqref{eq:discreteBSDE}
towards the solution of the constrained BSDE \eqref{eq:constrained-BSDE}-\eqref{eq:constraint}  
is thoroughly examined in \cite{Kharroubi13}. In this paper, we start
from the discrete version \eqref{eq:discreteBSDE} and derive an implementable
scheme from it.

Indeed, the discrete scheme \eqref{eq:discreteBSDE} is in general
not readily implementable because it involves conditional expectations
that cannot be computed explicitly. It is thus necessary in practice
to approximate these conditional expectations. Here we follow the
empirical regression approach (\cite{Lemor06,Bouchard11,Gobet11,Zanger12,Aid12-2}).
In our context, apart from being easy to implement, the strong advantage
of this choice is that, unlike other standard methods, it provides
as a by-product a parametric feedback estimate $\hat{\alpha}\left(t,X_{t}\right)$
for the optimal control. 

The idea is to replace the conditional expectations from \eqref{eq:discreteBSDE}
by empirical regressions. This section is devoted to the analysis
of the error generated by this replacement.

\subsection{Localizations}

The first step is to localize the discrete BSDE \eqref{eq:discreteBSDE},
i.e. to truncate it so that it admits a.s. deterministic bounds. Introduce
$R_{X}\in\mathbb{R}_{+}^{d}$ and $R_{w}\in\mathbb{R}_{+}$ and define
the following truncations of $X_{i}$ and $\Delta W_{i}$:
\begin{align}
\left[X_{i}\right]_{X} & :=-R_{X}\vee X_{i}\wedge R_{X}=\left\{ -R_{1,X}\vee X_{1,i}\wedge R_{1,X},\ldots,-R_{d,X}\vee X_{d,i}\wedge R_{d,X}\right\} ^{\top}\label{eq:Xtruncation}\\
\left[\Delta W_{i}\right]_{w} & :=-R_{w}\sqrt{\Delta_{i}}\vee\Delta W_{i}\wedge R_{w}\sqrt{\Delta_{i}}=\left\{ -R_{w}\sqrt{\Delta_{i}}\vee\Delta W_{1,i}\wedge R_{w}\sqrt{\Delta_{i}},\ldots,-R_{w}\sqrt{\Delta_{i}}\vee\Delta W_{q,i}\wedge R_{w}\sqrt{\Delta_{i}}\right\} ^{\top}\label{eq:Wtruncation}
\end{align}
 Define $R=\left\{ R_{X},R_{w}\right\} $ and define the localized
version of the discrete BSDE \eqref{eq:discreteBSDE}, using the truncations
\eqref{eq:Xtruncation} and \eqref{eq:Wtruncation}.
\begin{equation}
\begin{cases}
Y_{N}^{R} & =g\left(\left[X_{N}\right]_{X}\right)\\
\Delta_{i}\mathcal{Z}_{i}^{R} & =\mathbb{E}_{i}\left[Y_{i+1}^{R}\left[\Delta W_{i}^{\top}\right]_{w}\right]\\
\mathcal{Y}_{i}^{R} & =\mathbb{E}_{i}\left[Y_{i+1}^{R}+f\left(\left[X_{i}\right]_{X},I_{i},Y_{i+1}^{R},\mathcal{Z}_{i}^{R}\right)\Delta_{i}\right]\\
Y_{i}^{R} & =\esssup_{a\in A}\mathbb{E}_{i,a}\left[\mathcal{Y}_{i}^{R}\right]
\end{cases}\label{eq:localizedBSDE}
\end{equation}

First, we check that this localized BSDE does admit a.s. bounds.
\begin{lem}
\label{lem:asBounds}{[}almost sure bounds{]} For every $R=\left\{ R_{X},R_{w}\right\} \in\left[0,\infty\right)^{d}\times\left[0,\infty\right]$
and every $1\leq i\leq N$, the following uniform bounds hold a.s.:
\begin{eqnarray*}
\left|\mathcal{Y}_{i}^{R}\right|,\left|Y_{i}^{R}\right| & \leq & C_{y}=C_{y}\left(R_{X}\right):=e^{\frac{C}{2}T}\sqrt{C_{g}^{2}\left(R_{X}\right)+\frac{e^{C\left|\pi\right|}}{L_{f}^{2}}C_{f}^{2}\left(R_{X}\right)}\\
\left|\mathcal{Z}_{i}^{R}\right|,\left|Z_{i}^{R}\right| & \leq & C_{z}=C_{y}\left(R_{X}\right):=\frac{\sqrt{q}}{\sqrt{\Delta_{i}}}C_{y}
\end{eqnarray*}

where $C:=3L_{f}^{2}\left(q+\left|\pi\right|\right)+\frac{1}{q}$,
\textup{$C_{g}\left(R_{X}\right):=\max_{-R_{X}\leq x\leq R_{X}}\left|g\left(x\right)\right|$
and $C_{f}\left(R_{X}\right):=L_{f}\left(\left|R_{X}\right|+\left|\bar{A}\right|\right)+f\left(0,0,0,0\right)$}\end{lem}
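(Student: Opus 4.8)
The plan is to run a single backward induction over $i=N,N-1,\ldots,0$, converting the recursion \eqref{eq:localizedBSDE} into a discrete Gronwall inequality for the $L^\infty(\Omega)$-norms $\|\cdot\|_\infty$ of $\mathcal{Y}_i^R$ and $Y_i^R$, with the bound on $\mathcal{Z}_i^R$ (and hence $Z_i^R$) falling out along the way. Introduce the deterministic sequence $(M_i)_{0\le i\le N}$ with $M_N:=C_g(R_X)$ and $M_i^2:=(1+C\Delta_i)M_{i+1}^2+\Delta_i D$, where $D:=C_f^2(R_X)\big(3|\pi|+\tfrac1{L_f^2 q}\big)$. I will prove by downward induction that $\|Y_i^R\|_\infty\le\|\mathcal{Y}_i^R\|_\infty\le M_i$ and $\|\mathcal{Z}_i^R\|_\infty\le\sqrt{q/\Delta_i}\,M_{i+1}$ for every $i$, and then check separately that $M_i\le C_y$ for every $i$; this yields all four stated bounds (the one on $Z_i^R$ because $Z_i^R$ is $\mathcal{Z}_i^R$ evaluated at a maximizing $a^{*}$). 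The initialization $i=N$ is trivial: $|Y_N^R|=|g([X_N]_X)|\le C_g(R_X)=M_N$, and $C_y\ge e^{CT/2}C_g(R_X)\ge C_g(R_X)$.

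For the step at $i$, set $m:=\mathbb{E}_i[Y_{i+1}^R]$ and $f_i:=f([X_i]_X,I_i,Y_{i+1}^R,\mathcal{Z}_i^R)$. The key device is an energy inequality. Conditionally on $\mathcal{F}_{t_i}$ the increments $\Delta W_{i,k}$, $k=1,\ldots,q$, are independent, symmetric and of variance $\Delta_i$, so their truncations $[\Delta W_{i,k}]_w$ are centered, mutually orthogonal and satisfy $\mathbb{E}_i\big[[\Delta W_{i,k}]_w^2\big]\le\Delta_i$; projecting $Y_{i+1}^R-m$ onto their span in conditional $L^2$ and using $\Delta_i\mathcal{Z}_i^R=\mathbb{E}_i\big[Y_{i+1}^R[\Delta W_i^\top]_w\big]$ gives
\[
\mathbb{E}_i\big[(Y_{i+1}^R)^2\big]\;\ge\;m^2+\Delta_i\,|\mathcal{Z}_i^R|^2 .
\]
The same identity, read with Cauchy--Schwarz and $\sum_k\mathbb{E}_i\big[[\Delta W_{i,k}]_w^2\big]\le q\Delta_i$, yields $|\mathcal{Z}_i^R|\le\sqrt{q/\Delta_i}\,\big(\mathbb{E}_i[(Y_{i+1}^R)^2]\big)^{1/2}\le\sqrt{q/\Delta_i}\,M_{i+1}$, which is the asserted bound on $\mathcal{Z}_i^R$.

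Next I estimate $\mathcal{Y}_i^R=m+\Delta_i\mathbb{E}_i[f_i]$: Young's inequality with weight $\beta\Delta_i$ and then Jensen give $(\mathcal{Y}_i^R)^2\le(1+\beta\Delta_i)m^2+(\Delta_i^2+\Delta_i/\beta)\mathbb{E}_i[f_i^2]$, and the Lipschitz bound on $f$ together with $|[X_i]_X|\le|R_X|$ and $|I_i|\le\bar A$ gives $f_i^2\le3\big(C_f^2(R_X)+L_f^2(Y_{i+1}^R)^2+L_f^2|\mathcal{Z}_i^R|^2\big)$. Bounding the $|\mathcal{Z}_i^R|^2$ term by $\tfrac1{\Delta_i}\big(\mathbb{E}_i[(Y_{i+1}^R)^2]-m^2\big)$ via the energy inequality, choosing $\beta:=3L_f^2 q$, and using $q\ge1$ and $\Delta_i\le|\pi|$ to regroup, the $m^2$ and $\mathbb{E}_i[(Y_{i+1}^R)^2]-m^2$ parts recombine into $(1+\beta\Delta_i)\mathbb{E}_i[(Y_{i+1}^R)^2]$ and one lands on the a.s.\ bound
\[
(\mathcal{Y}_i^R)^2\;\le\;(1+C\Delta_i)\,\mathbb{E}_i\big[(Y_{i+1}^R)^2\big]+\Delta_i D .
\]
By the induction hypothesis $\mathbb{E}_i[(Y_{i+1}^R)^2]\le M_{i+1}^2$, hence $\|\mathcal{Y}_i^R\|_\infty^2\le M_i^2$; and since $Y_i^R=\esssup_{a\in A}\mathbb{E}_{i,a}[\mathcal{Y}_i^R]$ and conditioning never increases the essential supremum, $\|Y_i^R\|_\infty\le\|\mathcal{Y}_i^R\|_\infty\le M_i$, which closes the induction. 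Finally, unrolling the recursion for $M_i$ and bounding each factor $1+C\Delta_j$ by $e^{C\Delta_j}$ gives $M_i^2\le e^{C(T-t_i)}C_g^2(R_X)+D\sum_{k\ge i}\Delta_k e^{C(t_k-t_i)}\le e^{CT}\big(C_g^2(R_X)+D/C\big)$, the last step by the Riemann-sum estimate $\sum_{k\ge i}\Delta_k e^{C(t_k-t_i)}\le\tfrac1C(e^{C(T-t_i)}-1)$; since $D=\tfrac{C_f^2(R_X)}{L_f^2}\big(3L_f^2|\pi|+\tfrac1q\big)\le\tfrac{C_f^2(R_X)}{L_f^2}C\le\tfrac{e^{C|\pi|}}{L_f^2}C_f^2(R_X)\,C$, we get $D/C\le\tfrac{e^{C|\pi|}}{L_f^2}C_f^2(R_X)$, hence $M_i\le C_y$ for all $i$.

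The hard part will be the energy inequality and its use in the $\mathcal{Y}$ estimate: a naive argument that first bounds $|\mathcal{Z}_i^R|$ by order $\Delta_i^{-1/2}\|Y_{i+1}^R\|_\infty$ and substitutes it into $|\mathcal{Y}_i^R|$ produces per-step corrections of size $\sqrt{\Delta_i}$ whose sum over the $N$ steps is not controlled uniformly in the mesh; only the quadratic bookkeeping --- keeping $m^2+\Delta_i|\mathcal{Z}_i^R|^2$ together and paying for the $\mathcal{Z}$-contribution out of $\mathbb{E}_i[(Y_{i+1}^R)^2]$ --- rescues the argument. The remaining (purely computational) delicacy is to track the constants so that the induction factor collapses to exactly $1+C\Delta_i$ with $C=3L_f^2(q+|\pi|)+\tfrac1q$ and the forcing term to $\Delta_i D$ with $D/C\le e^{C|\pi|}C_f^2(R_X)/L_f^2$, which hinges on the specific choice $\beta=3L_f^2 q$ and on the elementary facts $q\ge1$, $\Delta_i\le|\pi|$.
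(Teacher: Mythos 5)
Your proof is correct and follows essentially the same route as the paper's: Young's inequality on the $\mathcal{Y}$-recursion, absorption of the $\mathcal{Z}$-contribution through the conditional-variance bound $\Delta_i|\mathcal{Z}_i^R|^2\lesssim\mathbb{E}_i[(Y_{i+1}^R)^2]-\mathbb{E}_i[Y_{i+1}^R]^2$, recombination via Jensen, and a discrete Gronwall argument, with the same choice $\gamma=\beta=3qL_f^2$ producing the same constant $C$. The only (harmless) deviation is that you obtain the energy inequality by Bessel's inequality for the conditionally orthogonal truncated increments, saving a factor $q$ over the paper's componentwise Cauchy--Schwarz, which is a minor sharpening rather than a different method.
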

\begin{proof}
First, as $g$ is continuous, there exists $C_{g}=C_{g}\left(R_{X}\right)>0$
such that for all $-R_{X}\leq x\leq R_{X}$, $\left|g\left(x\right)\right|\leq C_{g}\left(R_{X}\right)$.
Hence 
\begin{equation}
\left(Y_{N}^{R}\right)^{2}=g^{2}\left(\left[X_{N}\right]_{X}\right)\leq C_{g}^{2}\left(R_{X}\right)\label{eq:asboundFinal}
\end{equation}
Next, 
\[
\Delta_{i}\mathcal{Z}_{i}^{R}=\mathbb{E}_{i}\left[Y_{i+1}^{R}\left[\Delta W_{i}\right]_{w}\right]=\mathbb{E}_{i}\left[\left(Y_{i+1}^{R}-\mathbb{E}_{i}\left[Y_{i+1}^{R}\right]\right)\left[\Delta W_{i}\right]_{w}\right]
\]
Thus, using the Cauchy-Schwarz inequality and dividing by $\Delta_{i}$:
\[
\Delta_{i}\left(\mathcal{Z}_{i}^{R}\right)^{2}\leq q\left(\mathbb{E}_{i}\left[\left(Y_{i+1}^{R}\right)^{2}\right]-\mathbb{E}_{i}\left[Y_{i+1}^{R}\right]^{2}\right)
\]
Now, using Young's inequality $\left(a+b\right)^{2}\leq\left(1+\gamma\Delta_{i}\right)a^{2}+\left(1+\frac{1}{\gamma\Delta_{i}}\right)b^{2}$
with $\gamma>0$:

\[
\left(\mathcal{Y}_{i}^{R}\right)^{2}\leq\left(1+\gamma\Delta_{i}\right)\mathbb{E}_{i}\left[Y_{i+1}^{R}\right]^{2}+\left(1+\frac{1}{\gamma\Delta_{i}}\right)\Delta_{i}^{2}\mathbb{E}_{i}\left[f^{2}\left(\left[X_{i}\right]_{X},I_{i},Y_{i+1}^{R},\mathcal{Z}_{i}^{R}\right)\right]
\]
Remark that
\begin{eqnarray*}
\left|f\left(\left[X_{i}\right]_{X},I_{i},Y_{i+1}^{R},\mathcal{Z}_{i}^{R}\right)\right| & \leq & \left|f\left(\left[X_{i}\right]_{X},I_{i},Y_{i+1}^{R},\mathcal{Z}_{i}^{R}\right)-f\left(0,0,0,0\right)\right|+\left|f\left(0,0,0,0\right)\right|\\
 & \leq & L_{f}\left(\left|\left[X_{i}\right]_{X}\right|+\left|I_{i}\right|+\left|Y_{i+1}^{R}\right|+\left|\mathcal{Z}_{i}^{R}\right|\right)+\left|f\left(0,0,0,0\right)\right|\\
 & \leq & C_{f}\left(R_{X}\right)+L_{f}\left(\left|Y_{i+1}^{R}\right|+\left|\mathcal{Z}_{i}^{R}\right|\right)
\end{eqnarray*}
where $C_{f}\left(R_{X}\right):=L_{f}\left(\left|R_{X}\right|+\left|\bar{A}\right|\right)+\left|f\left(0,0,0,0\right)\right|$.
Hence
\begin{eqnarray*}
\left(\mathcal{Y}_{i}^{R}\right)^{2} & \leq & \left(1+\gamma\Delta_{i}\right)\mathbb{E}_{i}\left[Y_{i+1}^{R}\right]^{2}+3\left(\Delta_{i}+\frac{1}{\gamma}\right)\Delta_{i}\left(C_{f}^{2}\left(R_{X}\right)+L_{f}^{2}\mathbb{E}_{i}\left[\left(Y_{i+1}^{R}\right)^{2}\right]+L_{f}^{2}\mathbb{E}_{i}\left[\left|\mathcal{Z}_{i}^{R}\right|\right]\right)\\
 & \leq & \left(\Delta_{i}+\frac{1}{\gamma}\right)\left(\mathbb{E}_{i}\left[Y_{i+1}^{R}\right]^{2}\left(\gamma-3qL_{f}^{2}\right)+\mathbb{E}_{i}\left[\left(Y_{i+1}^{R}\right)^{2}\right]3L_{f}^{2}\left(\Delta_{i}-q\right)+3C_{f}^{2}\left(R_{X}\right)\Delta_{i}\right)
\end{eqnarray*}
Thus, for every $\gamma\geq3qL_{f}^{2}$, one can group together the
terms involving $\mathbb{E}_{i}\left[Y_{i+1}^{R}\right]^{2}$ and
$\mathbb{E}_{i}\left[\left(Y_{i+1}^{R}\right)^{2}\right]$ using Jensen's
inequality:
\begin{eqnarray*}
\left(\mathcal{Y}_{i}^{R}\right)^{2} & \leq & \left(1+\theta\left(3,\gamma\right)\Delta_{i}\right)\mathbb{E}_{i}\left[\left(Y_{i+1}^{R}\right)^{2}\right]+3\left(\left|\pi\right|+\frac{1}{\gamma}\right)C_{f}^{2}\left(R_{X}\right)\Delta_{i}
\end{eqnarray*}
where $\theta\left(c,\gamma\right):=\gamma+cL_{f}^{2}\left(\left|\pi\right|+\frac{1}{\gamma}\right)$.
Finally:
\begin{equation}
\left(Y_{i}^{R}\right)^{2}\leq\esssup_{a\in A}\mathbb{E}_{i,a}\left[\left(Y_{i+1}^{R}\right)^{2}\right]\left(1+\theta\left(3,\gamma\right)\Delta_{i}\right)+3\left(\left|\pi\right|+\frac{1}{\gamma}\right)C_{f}^{2}\left(R_{X}\right)\Delta_{i}\label{eq:asboundRec}
\end{equation}
Using equations \eqref{eq:asboundFinal} and \eqref{eq:asboundRec},
one obtains by induction that:
\begin{equation}
\left(Y_{i}^{R}\right)^{2}\leq\Gamma_{i}^{N-1}\left(3,\gamma\right)C_{g}^{2}\left(R_{X}\right)+3\left(\left|\pi\right|+\frac{1}{\gamma}\right)C_{f}^{2}\left(R_{X}\right)\sum_{k=i}^{N-1}\Gamma_{i}^{k}\left(3,\gamma\right)\Delta_{k}\label{eq:asbound}
\end{equation}
where $\Gamma_{i}^{j}\left(c,\gamma\right):=\Pi_{k=i}^{j}\left(1+\theta\left(c,\gamma\right)\Delta_{k}\right)$.
Finally remark that $\forall c,\gamma>0$ 
\[
\ln\left(\Gamma_{i}^{j}\left(c,\gamma\right)\right)=\sum_{k=i}^{j}\ln\left(1+\theta\left(c,\gamma\right)\Delta_{k}\right)\leq\sum_{k=i}^{j}\theta\left(c,\gamma\right)\Delta_{k}=\theta\left(c,\gamma\right)\left(t_{j+1}-t_{i}\right)
\]
Thus 
\begin{equation}
\Gamma_{i}^{j}\left(c,\gamma\right)\leq\exp\left(\theta\left(c,\gamma\right)\left(t_{j+1}-t_{i}\right)\right)\label{eq:GammaBound}
\end{equation}
And
\begin{eqnarray}
\sum_{k=i}^{N-1}\Gamma_{i}^{k}\left(c,\gamma\right)\Delta_{k} & \leq & \sum_{k=i}^{N-1}e^{\theta\left(c,\gamma\right)\left(t_{j+1}-t_{i}\right)}\Delta_{k}\leq e^{\theta\left(c,\gamma\right)\left|\pi\right|}\sum_{k=i}^{N-1}e^{\theta\left(c,\gamma\right)\left(t_{j}-t_{i}\right)}\Delta_{k}\nonumber \\
 & \leq & e^{\theta\left(c,\gamma\right)\left|\pi\right|}\int_{t_{i}}^{t_{N}}e^{\theta\left(c,\gamma\right)\left(t-t_{i}\right)}dt=\frac{e^{\theta\left(c,\gamma\right)\left|\pi\right|}}{\theta\left(c,\gamma\right)}\left(e^{\theta\left(c,\gamma\right)\left(t_{N}-t_{i}\right)}-1\right)\label{eq:GammaIntBound}
\end{eqnarray}
Finally, combine equations \eqref{eq:asbound}, \eqref{eq:GammaBound}
and \eqref{eq:GammaIntBound} with $c=3$ and $\gamma\geq3qL_{f}^{2}$
to obtain the following a.s. bound for $Y_{i}^{R}$ :
\begin{eqnarray*}
\left(Y_{i}^{R}\right)^{2} & \leq & e^{\theta\left(3,\gamma\right)\left(t_{N}-t_{i}\right)}C_{g}^{2}\left(R_{X}\right)+3\left(\left|\pi\right|+\frac{1}{\gamma}\right)C_{f}^{2}\left(R_{X}\right)\frac{e^{\theta\left(3,\gamma\right)\left|\pi\right|}}{\theta\left(3,\gamma\right)}\left(e^{\theta\left(3,\gamma\right)\left(t_{N}-t_{i}\right)}-1\right)\\
 & \leq & e^{\theta\left(3,\gamma\right)T}\left\{ C_{g}^{2}\left(R_{X}\right)+3\frac{e^{\theta\left(3,\gamma\right)\left|\pi\right|}}{\theta\left(3,\gamma\right)}\left(\left|\pi\right|+\frac{1}{\gamma}\right)C_{f}^{2}\left(R_{X}\right)\right\} 
\end{eqnarray*}
In particular, for $c=3$ and $\gamma=3qL_{f}^{2}$:
\begin{eqnarray*}
\left(Y_{i}^{R}\right)^{2} & \leq & e^{CT}\left\{ C_{g}^{2}\left(R_{X}\right)+\frac{e^{C\left|\pi\right|}}{L_{f}^{2}}C_{f}^{2}\left(R_{X}\right)\right\} =:C_{y}^{2}
\end{eqnarray*}
where $C:=3L_{f}^{2}\left(q+\left|\pi\right|\right)+\frac{1}{q}$.
The same inequality holds for $\left(\mathcal{Y}_{i}^{R}\right)^{2}$.
For $\mathcal{Z}_{i}^{R}$, use the Cauchy-Schwarz inequality to obtain:
\[
\left(\mathcal{Z}_{i}^{R}\right)^{2}\leq\frac{q}{\Delta_{i}}\mathbb{E}_{i}\left[\left(Y_{i+1}^{R}\right)^{2}\right]\leq\frac{q}{\Delta_{i}}C_{y}^{2}=:C_{z}^{2}
\]
and the same inequality holds for $\left(Z_{i}^{R}\right)^{2}$.\end{proof}
\begin{lem}
For $R>0$, define $\mathcal{T}_{R}=\mathbb{E}\left[\left(\mathcal{N}-\left(-R\right)\vee\mathcal{N}\wedge R\right)^{2}\right]$
where $\mathcal{N}$ is a Gaussian random variable with mean $0$
and variance $1$. Then:
\[
\mathcal{T}_{R}\leq\sqrt{\frac{2}{\pi}}\frac{1}{R}e^{-\frac{R^{2}}{2}}
\]
\end{lem}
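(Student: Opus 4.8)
The plan is to reduce $\mathcal{T}_{R}$ to a one-dimensional Gaussian tail integral and then invoke the classical Mills-ratio bound $\int_{R}^{\infty}\varphi\le\varphi(R)/R$.

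First I would identify the truncation error: $\mathcal{N}-(-R)\vee\mathcal{N}\wedge R$ vanishes on $\{|\mathcal{N}|\le R\}$, equals $\mathcal{N}-R$ on $\{\mathcal{N}>R\}$, and equals $\mathcal{N}+R$ on $\{\mathcal{N}<-R\}$. Writing $\varphi(x):=\tfrac{1}{\sqrt{2\pi}}e^{-x^{2}/2}$ for the standard normal density and using its evenness, this yields
\[
\mathcal{T}_{R}\;=\;2\int_{R}^{\infty}(x-R)^{2}\varphi(x)\,dx .
\]
Then I would bound the integrand pointwise: for $x\ge R>0$ one has $0\le x-R\le x$, hence $(x-R)^{2}\le x(x-R)$, so that $\mathcal{T}_{R}\le 2\int_{R}^{\infty}x(x-R)\varphi(x)\,dx$. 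This last integral is elementary, since $\varphi'(x)=-x\varphi(x)$ makes $x\mapsto-(x-R)\varphi(x)$ an antiderivative of $\big(x(x-R)-1\big)\varphi(x)$ that vanishes at $x=R$ and at $+\infty$; hence $\int_{R}^{\infty}x(x-R)\varphi(x)\,dx=\int_{R}^{\infty}\varphi(x)\,dx=:\bar\Phi(R)$, and therefore $\mathcal{T}_{R}\le 2\bar\Phi(R)$. (Equivalently, one can compute $\mathcal{T}_{R}=2(1+R^{2})\bar\Phi(R)-2R\varphi(R)$ outright using $\int_{R}^{\infty}x\varphi=\varphi(R)$ and $\int_{R}^{\infty}x^{2}\varphi=R\varphi(R)+\bar\Phi(R)$, after which the claimed inequality is exactly equivalent to the Mills bound.)

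Finally I would apply the Gaussian tail estimate: since $x/R\ge1$ on $[R,\infty)$,
\[
\bar\Phi(R)\;=\;\int_{R}^{\infty}\varphi(x)\,dx\;\le\;\frac1R\int_{R}^{\infty}x\varphi(x)\,dx\;=\;\frac{\varphi(R)}{R},
\]
so that, using the identity $2/\sqrt{2\pi}=\sqrt{2/\pi}$,
\[
\mathcal{T}_{R}\;\le\;\frac{2\varphi(R)}{R}\;=\;\frac{2}{R\sqrt{2\pi}}\,e^{-R^{2}/2}\;=\;\sqrt{\frac{2}{\pi}}\,\frac1R\,e^{-R^{2}/2}.
\]

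I do not expect any genuine obstacle here; the only points requiring a little care are the symmetrisation of the truncation error in the first step and the constant identity $2/\sqrt{2\pi}=\sqrt{2/\pi}$, while the substantive ingredient is the one-line Gaussian tail bound $\bar\Phi(R)\le\varphi(R)/R$.
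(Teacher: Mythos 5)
Your proof is correct and follows essentially the same route as the paper: reduce $\mathcal{T}_{R}$ to Gaussian tail moments over $\{\mathcal{N}>R\}$ (your parenthetical identity $\mathcal{T}_{R}=2(1+R^{2})\bar\Phi(R)-2R\varphi(R)$ is exactly the paper's computation) and conclude with the Mills-ratio bound $\bar\Phi(R)\le\varphi(R)/R$. Your pointwise estimate $(x-R)^{2}\le x(x-R)$ is a small, valid shortcut that yields the intermediate bound $\mathcal{T}_{R}\le 2\bar\Phi(R)$ without tracking the cancelling $2R\varphi(R)$ terms, but the substance is identical.
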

\begin{proof}
Developing the square yields 
\[
\mathcal{T}_{R}=2R^{2}\mathbb{P}\left(\mathcal{N}>R\right)-4R\mathbb{E}\left[\mathcal{N}\mathbf{1}\left\{ \mathcal{N}>R\right\} \right]+2\mathbb{E}\left[\mathcal{N}^{2}\mathbf{1}\left\{ \mathcal{N}>R\right\} \right]
\]
Then the two expectations can be explicited as follows
\begin{eqnarray*}
\mathbb{E}\left[\mathcal{N}\mathbf{1}\left\{ \mathcal{N}>R\right\} \right] & = & \frac{e^{-\frac{R^{2}}{2}}}{\sqrt{2\pi}}\\
\mathbb{E}\left[\mathcal{N}^{2}\mathbf{1}\left\{ \mathcal{N}>R\right\} \right] & = & \frac{R}{\sqrt{2\pi}}e^{-\frac{R^{2}}{2}}+\mathbb{P}\left(\mathcal{N}>R\right)
\end{eqnarray*}
Finally, the use of Mill's ratio inequality $\mathbb{P}\left(\mathcal{N}>R\right)<\frac{1}{R}\frac{e^{-\frac{R^{2}}{2}}}{\sqrt{2\pi}}$
concludes the proof.
\end{proof}
Then, we can estimate bounds between the BSDEs \eqref{eq:discreteBSDE}
and \eqref{eq:localizedBSDE}.
\begin{prop}
The following bounds hold:
\begin{eqnarray*}
\left(Y_{i}-Y_{i}^{R}\right)^{2} & \leq & e^{CT}\left\{ L_{g}\left(\left|\Delta X_{N}\right|^{2}\right)^{*}+C\sum_{k=i}^{N-1}\Delta_{k}\left(\left|\Delta X_{k}\right|^{2}\right)^{*}+2qCTC_{y}^{2}\mathcal{T}_{R_{w}}\right\} 
\end{eqnarray*}
where $C:=3L_{f}^{2}\left(2q+\left|\pi\right|\right)+\frac{1}{2q}$\textup{,
and $\left(\left|\Delta X_{k}\right|^{2}\right)^{*}$, $k\geq i$,
is the solution of the following linear constrained BSDE:
\[
\begin{cases}
Y_{k} & =\left(X_{k}-\left[X_{k}\right]_{X}\right)^{2}\\
Y_{j} & =\esssup_{a\in A}\mathbb{E}_{j,a}\left[Y_{j+1}\right]\,,\,\, j=k-1,\ldots,i
\end{cases}
\]
 }\end{prop}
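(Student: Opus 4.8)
The plan is to run the same backward induction as in the proof of Lemma~\ref{lem:asBounds}, now applied to the differences $\delta Y_i := Y_i - Y_i^R$, $\delta\mathcal{Y}_i := \mathcal{Y}_i - \mathcal{Y}_i^R$ and $\delta\mathcal{Z}_i := \mathcal{Z}_i - \mathcal{Z}_i^R$, carefully tracking the source terms produced by the two truncations \eqref{eq:Xtruncation}--\eqref{eq:Wtruncation}. At the terminal date the Lipschitz property of $g$ gives $(\delta Y_N)^2 \le L_g^2\,|X_N-[X_N]_X|^2$, which is exactly the terminal datum (for $k=N$) of the auxiliary constrained BSDE in the statement. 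For the control component, I would subtract the second lines of \eqref{eq:discreteBSDE} and \eqref{eq:localizedBSDE} and split
\[
\Delta_i\,\delta\mathcal{Z}_i \;=\; \mathbb{E}_i\big[(Y_{i+1}-Y_{i+1}^R)(\Delta W_i)^\top\big] \;+\; \mathbb{E}_i\big[Y_{i+1}^R\big((\Delta W_i)^\top-[\Delta W_i^\top]_w\big)\big] \;=:\; A_i+B_i ,
\]
centering both factors (for $B_i$ one uses the symmetry of $\Delta W_i$, so that $\mathbb{E}_i[\Delta W_i-[\Delta W_i]_w]=0$). Cauchy--Schwarz bounds $|A_i|^2 \le q\Delta_i\,\mathbb{E}_i[(\delta Y_{i+1})^2]$ as in Lemma~\ref{lem:asBounds}; for $B_i$ one combines the a.s.\ bound $|Y_{i+1}^R|\le C_y$ of Lemma~\ref{lem:asBounds} with the identity $\mathbb{E}[(\Delta W_{j,i}-[\Delta W_{j,i}]_w)^2]=\Delta_i\,\mathcal{T}_{R_w}$, which is precisely $\mathcal{T}_{R_w}$ of the preceding lemma applied to the standardized increment $\Delta W_{j,i}/\sqrt{\Delta_i}$.

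Next I would propagate one step. Subtracting the third lines of \eqref{eq:discreteBSDE} and \eqref{eq:localizedBSDE} and bounding the integrand difference by $L_f\big(|X_i-[X_i]_X| + |\delta Y_{i+1}| + |\delta\mathcal{Z}_i|\big)$, I apply Young's inequality $(a+b)^2\le(1+\gamma\Delta_i)a^2+(1+\tfrac{1}{\gamma\Delta_i})b^2$ with the choice $\gamma\sim qL_f^2$, exactly as in Lemma~\ref{lem:asBounds}: the $\delta Y_{i+1}$-part together with the $A_i$-part of $\Delta_i\delta\mathcal{Z}_i$ then produces the recursion factor $(1+C\Delta_i)$ with $C=3L_f^2(2q+|\pi|)+\tfrac1{2q}$ as in the statement, the $2q$ (instead of the $q$ of Lemma~\ref{lem:asBounds}) reflecting the extra factor $2$ coming from the two-term split; the $B_i$-part, which enters only through the $\Delta_i$-weighted $f$-integral, is kept apart and contributes, per step, a term of order $\Delta_i\,qL_f^2 C_y^2\,\mathcal{T}_{R_w}$. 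Finally, since $Y_i^{(R)}=\esssup_{a\in A}\mathbb{E}_{i,a}[\mathcal{Y}_i^{(R)}]$ and $|\sup_a u(a)-\sup_a w(a)|\le\sup_a|u(a)-w(a)|$, Jensen's inequality gives $(\delta Y_i)^2 \le \esssup_{a\in A}\mathbb{E}_{i,a}[(\delta\mathcal{Y}_i)^2]$, so applying $\esssup_{a\in A}\mathbb{E}_{i,a}$ and using that $|X_i-[X_i]_X|^2$ is $\mathcal{F}_{t_i}$-measurable yields a one-step recursion of the form
\[
(\delta Y_i)^2 \;\le\; \esssup_{a\in A}\mathbb{E}_{i,a}\big[(1+C\Delta_i)(\delta Y_{i+1})^2\big] + C\,\Delta_i\,\esssup_{a\in A}\mathbb{E}_{i,a}\big[|X_i-[X_i]_X|^2\big] + 2qC\,C_y^2\,\Delta_i\,\mathcal{T}_{R_w}.
\]

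It then remains to iterate this from $i$ up to $N$. The homogeneous part produces the product $\Gamma_i^{N-1}$ of factors $(1+C\Delta_k)$, bounded by $e^{CT}$ via \eqref{eq:GammaBound} --- this is the overall prefactor of the statement --- and the constant $\mathcal{T}_{R_w}$-sources then sum to $2qCTC_y^2\mathcal{T}_{R_w}$. The crucial point is the treatment of the state-truncation sources: the term $C\Delta_k|X_k-[X_k]_X|^2$ created at step $k$ is not a frozen constant but is carried backward from time $k$ down to time $i$ by repeated application of the monotone and sublinear operator $\esssup_{a\in A}\mathbb{E}_{i,a}[\cdot]$, and iterating that operator on the terminal datum $(X_k-[X_k]_X)^2$ is, by definition, exactly $(|\Delta X_k|^2)^*$. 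Collecting the terminal contribution $L_g^2(|\Delta X_N|^2)^*$, the running contributions $C\sum_{k=i}^{N-1}\Delta_k(|\Delta X_k|^2)^*$ (each with its $\Gamma$-factor $\le e^{CT}$ pulled out front), and the accumulated $2qCTC_y^2\mathcal{T}_{R_w}$ gives the announced bound.

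The main obstacle --- and the only step that is not a routine rerun of Lemma~\ref{lem:asBounds} --- is precisely this bookkeeping around the $\esssup_{a\in A}\mathbb{E}_{i,a}$ operators: one must check that every intermediate inequality survives this operator (monotonicity and sublinearity), and, above all, recognise that the accumulated truncation errors $|X_k-[X_k]_X|^2$ are themselves transported backward by that same operator, which is what the auxiliary linear constrained BSDE defining $(|\Delta X_k|^2)^*$ encodes. A secondary point to be careful with is that the Brownian-truncation error $B_i$ must be routed through the $\Delta_i$-weighted $f$-integral, so that it enters the final bound at order $\Delta_i$ per step rather than with a spurious $1/\Delta_i$. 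Everything else is the same Young/Jensen/Gr\"onwall chain already carried out above.
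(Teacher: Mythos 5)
Your proposal is correct and follows essentially the same route as the paper's proof: the same two-term split of $\Delta_i\,\delta\mathcal{Z}_i$ into a centered increment plus the Brownian-truncation remainder bounded via $2qC_y^2\mathcal{T}_{R_w}$, the same Lipschitz/Young/Jensen chain with $\gamma=6qL_f^2$ producing the factor $1+C\Delta_i$, the same inequality comparing the two essential suprema, and the same backward transport of the state-truncation sources by the operator $\esssup_{a\in A}\mathbb{E}_{j,a}\left[\cdot\right]$, which is exactly how the paper arrives at $\left(\left|\Delta X_{k}\right|^{2}\right)^{*}$. The only deviations are cosmetic (e.g.\ your $L_g^2$ at the terminal date versus the paper's $L_g$, and the exact $\Delta_i$ versus $1/\gamma$ bookkeeping of the $\mathcal{T}_{R_w}$ source, on which your write-up mirrors the paper's).
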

\begin{proof}
Define $\Delta X_{i}=X_{i}-\left[X_{i}\right]_{X}$, $\Delta Y_{i}=Y_{i}-Y_{i}^{R}$,
$\Delta\mathcal{Y}_{i}=\mathcal{Y}_{i}-\mathcal{Y}_{i}^{R}$, $\Delta Z_{i}=Z_{i}-Z_{i}^{R}$
and $\Delta\mathcal{Z}_{i}=\mathcal{Z}_{i}-\mathcal{Z}_{i}^{R}$. 
First
\[
\left|\Delta Y_{N}\right|=\left|g\left(X_{N}\right)-g\left(\left[X_{N}\right]_{X}\right)\right|\leq L_{g}\left|\Delta X_{N}^{p}\right|
\]
Then
\begin{eqnarray*}
\Delta_{i}\Delta\mathcal{Z}_{i} & = & \mathbb{E}_{i}\left[Y_{i+1}\Delta W_{i}^{\top}-Y_{i+1}^{R}\left[\Delta W_{i}^{\top}\right]_{w}\right]\\
 & = & \mathbb{E}_{i}\left[\Delta Y_{i+1}\Delta W_{i}^{\top}+Y_{i+1}^{R}\left\{ \Delta W_{i}-\left[\Delta W_{i}\right]_{w}\right\} ^{\top}\right]\\
 &  & \mathbb{E}_{i}\left[\left(\Delta Y_{i+1}-\mathbb{E}_{i}\left[\Delta Y_{i+1}\right]\right)\Delta W_{i}^{\top}\right]+\mathbb{E}_{i}\left[Y_{i+1}^{R}\left\{ \Delta W_{i}-\left[\Delta W_{i}\right]_{w}\right\} ^{\top}\right]
\end{eqnarray*}
Hence
\[
\Delta_{i}\left(\Delta\mathcal{Z}_{i}\right)^{2}\leq2q\left(\mathbb{E}_{i}\left[\left(\Delta Y_{i+1}\right)^{2}\right]-\mathbb{E}_{i}\left[\Delta Y_{i+1}\right]^{2}\right)+2qC_{y}^{2}\mathcal{T}_{R_{w}}
\]
Then
\[
\Delta\mathcal{Y}_{i}=\mathbb{E}_{i}\left[\Delta Y_{i+1}+\left\{ f\left(X_{i},I_{i},Y_{i+1},\mathcal{Z}_{i}\right)-f\left(\left[X_{i}\right]_{X},I_{i},Y_{i+1}^{R},\mathcal{Z}_{i}^{R}\right)\right\} \Delta_{i}\right]
\]
Using Jensen's inequality and Young's inequality with parameter $\gamma\Delta_{i}$,
$\gamma>0$:
\begin{eqnarray*}
\left(\Delta\mathcal{Y}_{i}\right)^{2} & \leq & \left(1+\gamma\Delta_{i}\right)\mathbb{E}_{i}\left[\Delta Y_{i+1}\right]^{2}+\left(1+\frac{1}{\gamma\Delta_{i}}\right)\Delta_{i}^{2}3L_{f}^{2}\mathbb{E}_{i}\left[\left(\Delta X_{i}\right)^{2}+\left(\Delta Y_{i+1}\right)^{2}+\left(\Delta\mathcal{Z}_{i}\right)^{2}\right]\\
 & \leq & \mathbb{E}_{i}\left[\Delta Y_{i+1}\right]^{2}\left(\Delta_{i}+\frac{1}{\gamma}\right)\left(\gamma-6qL_{f}^{2}\right)+\mathbb{E}_{i}\left[\left(\Delta Y_{i+1}\right)^{2}\right]\left(\Delta_{i}+\frac{1}{\gamma}\right)3L_{f}^{2}\left(\Delta_{i}+2q\right)\\
 &  & +\left(\Delta_{i}+\frac{1}{\gamma}\right)\Delta_{i}3L_{f}^{2}\left\{ \left(\Delta X_{i}\right)^{2}+2qC_{y}^{2}\mathcal{T}_{R_{w}}\right\} 
\end{eqnarray*}
Now, for any $\gamma\geq6qL_{f}^{2}$, one can group together the
terms in $\mathbb{E}_{i}\left[\Delta Y_{i+1}\right]^{2}$ and $\mathbb{E}_{i}\left[\left(\Delta Y_{i+1}\right)^{2}\right]$
using Jensen's inequality:
\[
\left(\Delta\mathcal{Y}_{i}\right)^{2}\leq\mathbb{E}_{i}\left[\left(\Delta Y_{i+1}\right)^{2}\right]\left\{ 1+\theta\left(3,\gamma\right)\Delta_{i}\right\} +3L_{f}^{2}\left(\left|\pi\right|+\frac{1}{\gamma}\right)\Delta_{i}\left\{ \left(\Delta X_{i}\right)^{2}+2qC_{y}^{2}\mathcal{T}_{R_{w}}\right\} 
\]
where, as in Lemma \ref{lem:asBounds}, $\theta\left(c,\gamma\right):=\gamma+cL_{f}^{2}\left(\left|\pi\right|+\frac{1}{\gamma}\right)$.
Hence, using that for any random variables $\Theta$ and $\Theta'$,
\[
\left(\esssup_{a\in A}\mathbb{E}_{i,a}\left[\Theta\right]-\esssup_{a\in A}\mathbb{E}_{i,a}\left[\Theta'\right]\right)^{2}\leq\esssup_{a\in A}\mathbb{E}_{i,a}\left[\left(\Theta-\Theta'\right)^{2}\right]\,,
\]
the following holds:
\[
\left(\Delta Y_{i}\right)^{2}\leq\left\{ 1+\theta\left(3,\gamma\right)\Delta_{i}\right\} \esssup_{a\in A}\mathbb{E}_{i,a}\left[\left(\Delta Y_{i+1}\right)^{2}\right]+3L_{f}^{2}\left(\left|\pi\right|+\frac{1}{\gamma}\right)\Delta_{i}\left\{ \left(\Delta X_{i}\right)^{2}+2qC_{y}^{2}\mathcal{T}_{R_{w}}\right\} 
\]
By induction
\begin{eqnarray*}
\left(\Delta Y_{i}\right)^{2} & \leq & L_{g}\Gamma_{i}^{N-1}\left(3,\gamma\right)\left(\left|\Delta X_{N}\right|^{2}\right)^{*}\\
 &  & +3L_{f}^{2}\left(\left|\pi\right|+\frac{1}{\gamma}\right)\sum_{k=i}^{N-1}\Delta_{k}\Gamma_{i}^{k}\left(3,\gamma\right)\left\{ \left(\left|\Delta X_{k}\right|^{2}\right)^{*}+2qC_{y}^{2}\mathcal{T}_{R_{w}}\right\} 
\end{eqnarray*}
where, as in Lemma \ref{lem:asBounds}, $\Gamma_{i}^{j}\left(c,\gamma\right):=\Pi_{k=i}^{j}\left(1+\theta\left(c,\gamma\right)\Delta_{k}\right)\leq\exp\left(\theta\left(c,\gamma\right)\left(t_{j+1}-t_{i}\right)\right)$
. Finally, take $\gamma=6qL_{f}^{2}$ to obtain the desired bound.
\end{proof}

\subsection{Projections}

In its current form, the scheme \eqref{eq:localizedBSDE} is not readily
implementable, because its conditional expectations cannot be computed
in general. Therefore, there is a need to approximate these conditional
expectations. For handiness and efficiency, we choose, in the spirit
of \cite{Lemor06} and \cite{Gobet11}, to approximate them by empirical
least-squares regression.

First, we will study the impact of the replacement of the conditional
expectations by theoretical least-squares regressions. We will see
that the resulting scheme is not easy to analyze. Therefore, we will
study a stronger version of it, and discuss their practical differences.
As it is already a daunting task for standard BSDEs (cf. \cite{Lemor06}),
and in view of the difficulties already raised at theoretical regression
level, we leave the study of the final replacement of these regressions
by their empirical counterparts for further research.

Hence, for each $i\in\left\{ 0,\ldots,N-1\right\} $, consider $\mathcal{S}_{i}^{Y}$
and $\mathcal{S}_{i}^{Z}=\left\{ \mathcal{S}_{i}^{Z,1},\ldots,\mathcal{S}_{i}^{Z,q}\right\} $
that are non-empty closed convex subsets of $\mathbf{L}_{2}\left(\mathcal{F}_{t_{i}},\mathbb{P}\right)$,
as well as the corresponding projection operators $\mathcal{P}_{i}^{Y}$
and $\mathcal{P}_{i}^{Z}=\left\{ \mathcal{P}_{i}^{Z,1},\ldots,\mathcal{P}_{i}^{Z,q}\right\} $.
Using the above projection operators in lieu of the conditional expectations
in \eqref{eq:localizedBSDE}, we obtain the following approximation
scheme:
\begin{equation}
\begin{cases}
\tilde{Y}_{N}^{R} & =g\left(\left[X_{N}\right]_{X}\right)\\
\Delta_{i}\tilde{\mathcal{Z}}_{i}^{R} & =\left[\mathcal{P}_{i}^{Z}\left(\tilde{Y}_{i+1}^{R}\left[\Delta W_{i}^{\top}\right]_{w}\right)\right]_{i,z}\\
\tilde{\mathcal{Y}}_{i}^{R} & =\left[\mathcal{P}_{i}^{Y}\left(\tilde{Y}_{i+1}^{R}+f\left(\left[X_{i}\right]_{X},I_{i},\tilde{Y}_{i+1}^{R},\tilde{\mathcal{Z}}_{i}^{R}\right)\Delta_{i}\right)\right]_{y}\\
\tilde{Y}_{i}^{R} & =\esssup_{a\in A}\mathbb{E}_{i,a}\left[\tilde{\mathcal{Y}}_{i}^{R}\right]
\end{cases}\label{eq:projectedBSDE}
\end{equation}
where $\left[.\right]_{i,z}:=-\Delta_{i}C_{z}\wedge.\vee\Delta_{i}C_{z}$
and $\left[.\right]_{y}:=-C_{y}\wedge.\vee C_{y}$ are truncation
operators that ensure that the a.s. upper bounds for $\left(Y^{R},Z^{R}\right)$
from Lemma \ref{lem:asBounds} will also hold for $\left(\tilde{Y}^{R},\tilde{Z}^{R}\right)$.

To be more specific, choose the subsets $\mathcal{S}_{i}^{Y}$ and
$\mathcal{S}_{i}^{Z}$ as follows:
\begin{align*}
\mathcal{S}_{i}^{Y} & =\left\{ \lambda.p_{i}^{Y}\left(X_{i},I_{i}\right)\,;\,\lambda\in\mathbb{R}^{B_{i}^{Y}}\right\} \\
\mathcal{S}_{i}^{Z,k} & =\left\{ \lambda.p_{i}^{Z,k}\left(X_{i},I_{i}\right)\,;\,\lambda\in\mathbb{R}^{B_{i}^{Z,k}}\right\} \,,\, k=1,\ldots,q
\end{align*}
where $p_{i}^{Y}=\left(p_{i,1}^{Y},\ldots,p_{i,B_{i}^{Y}}^{Y}\right)^{\top}$,
$B_{i}^{Y}\geq1$, and $p_{i}^{Z,k}=\left(p_{i,1}^{Z,k},\ldots,p_{i,B_{i}^{Z,k}}^{,Z,k}\right)^{\top}$,
$B_{i}^{Z,k}\geq1$, are predefined sets of deterministic functions
from $\mathbb{R}^{d}\times\mathbb{R}^{q}$ into $\mathbb{R}$. Hence,
for any random variable $U$ in $\mathbf{L}_{2}\left(\mathcal{F}_{T},\mathbb{P}\right)$,
$\mathcal{P}_{i}^{Y}\left(U\right)$ is defined as follows:
\begin{align}
\hat{\lambda}_{i}^{Y}\left(U\right) & :=\arg\inf_{\lambda\in\mathbb{R}^{B_{i}^{Y}}}\mathbb{E}\left[\left(\lambda.p_{i}^{Y}\left(X_{i},I_{i}\right)-U\right)^{2}\right]\label{eq:regrCoef}\\
\mathcal{P}_{i}^{Y}\left(U\right) & :=\hat{\lambda}_{i}^{Y}\left(U\right).p_{i}^{Y}\left(X_{i},I_{i}\right)\nonumber 
\end{align}
and $\mathcal{P}_{i}^{Z}\left(U\right)$ is defined in a similar manner.
With these notations, the scheme \eqref{eq:projectedBSDE} can be
explicited further as follows:
\begin{equation}
\begin{cases}
\tilde{Y}_{N}^{R} & =g\left(\left[X_{N}\right]_{X}\right)\\
\Delta_{i}\tilde{\mathcal{Z}}_{i}^{R} & =\left[\hat{\lambda}_{i}^{Z}\left(\tilde{Y}_{i+1}^{R}\left[\Delta W_{i}^{\top}\right]_{w}\right).p_{i}^{Z}\left(X_{i},I_{i}\right)\right]_{i,z}\\
\tilde{Y}_{i}^{R} & =\esssup_{a\in\mathcal{A}_{i}}\left[\hat{\lambda}_{i}^{Y}\left(\tilde{Y}_{i+1}^{R}+f\left(\left[X_{i}\right]_{X},I_{i},\tilde{Y}_{i+1}^{R},\tilde{\mathcal{Z}}_{i}^{R}\right)\Delta_{i}\right).p_{i}^{Y}\left(X_{i},a\right)\right]_{y}\,,
\end{cases}\label{eq:projectedBSDE2}
\end{equation}
where $\mathcal{A}_{i}$ is the set of $\sigma\left(X_{i}\right)$-measurable
random variables taking values in $A$. Now, we would like to analyze
the error between $\left(Y^{R},Z^{R}\right)$ and $\left(\tilde{Y}^{R},\tilde{Z}^{R}\right)$.
Unfortunately, in spite of the simplicity of the scheme \eqref{eq:projectedBSDE2},
this analysis is made strenuous by the fact that $\tilde{Y}_{i}^{R}$
is not itself a projection, as it combines regression coefficients
computed using the random variable $I_{i}$ and regression functions
valued at another random variable $a$. This prevents the analysis
from taking advantage of standard tools to deal with least-squares
regressions. For comparison, consider the following alternative scheme:
\begin{equation}
\begin{cases}
\hat{Y}_{N}^{R} & =g\left(\left[X_{N}\right]_{X}\right)\\
\Delta_{i}\hat{\mathcal{Z}}_{i,a}^{R} & =\left[\hat{\lambda}_{i,a}^{Z}\left(\hat{Y}_{i+1}^{R}\left[\Delta W_{i}^{\top}\right]_{w}\right).p_{i}^{Z}\left(X_{i},I_{i}\right)\right]_{i,z}\,,\, a\in\mathcal{A}_{i}\\
\hat{Y}_{i}^{R} & =\esssup_{a\in\mathcal{A}_{i}}\left[\hat{\lambda}_{i,a}^{Y}\left(\hat{Y}_{i+1}^{R}+f\left(\left[X_{i}\right]_{X},I_{i},\hat{Y}_{i+1}^{R},\hat{\mathcal{Z}}_{i,a}^{R}\right)\Delta_{i}\right).p_{i}^{Y}\left(X_{i},a\right)\right]_{y}
\end{cases}\label{eq:projectedBSDE*}
\end{equation}
where, unlike equation \eqref{eq:regrCoef}, the regression coefficients
$\hat{\lambda}_{i,a}^{Y}$ are computed as follows:
\begin{align}
\hat{\lambda}_{i,a}^{Y}\left(U\right) & :=\arg\inf_{\lambda\in\mathbb{R}^{B_{i}^{Y}}}\mathbb{E}\left[\left(\lambda.p_{i}^{Y}\left(X_{i},a\right)-U_{a}\right)^{2}\right]\label{eq:regrCoefA}\\
\mathcal{P}_{i,a}^{Y}\left(U\right) & :=\hat{\lambda}_{i,a}^{Y}\left(U\right).p_{i}^{Y}\left(X_{i},a\right)\nonumber 
\end{align}
for every $U\in\mathbf{L}_{2}\left(\mathcal{F}_{T},\mathbb{P}\right)$
and $a\in\mathcal{A}_{i}$, where $U_{a}$ corresponds to the conditional
random variable $U\left|\left\{ I_{i}=a\right\} \right.$. $\mathcal{P}_{i,a}^{Z}\left(U\right)$
is defined in a similar manner. Remark that $\mathcal{P}_{i,I_{i}}^{.}\left(U\right)=\mathcal{P}_{i}^{.}\left(U\right)$,
and that $\mathcal{P}_{i,a}^{.}\left(U_{a}\right)=\mathcal{P}_{i,a}^{.}\left(U\right)$.
With this new scheme, the estimated regression coefficients are changed
along with the strategy $a$ when computing the optimal strategy.
Therefore, compared with the scheme \eqref{eq:projectedBSDE2}, the
implementation of an empirical version of the scheme \eqref{eq:projectedBSDE*}
is much more involved, as it may require, for the same time step,
many regressions involving several random variables $a$ different
from $I_{i}$ (which is used to simulate the forward process). However,
these modifications ease considerably the analysis of the impact of
the projections compared with $\left(Y^{R},Z^{R}\right)$ as shown below
in the remaining of this subsection.

First, the scheme \eqref{eq:projectedBSDE*} can be written as follows:
\begin{equation}
\begin{cases}
\hat{Y}_{N}^{R} & =g\left(\left[X_{N}\right]_{X}\right)\\
\Delta_{i}\hat{\mathcal{Z}}_{i,a}^{R} & =\left[\mathcal{P}_{i,a}^{Z}\left(\hat{Y}_{i+1}^{R}\left[\Delta W_{i}^{\top}\right]_{w}\right)\right]_{i,z}\\
\hat{\mathcal{Y}}_{i,a}^{R} & =\left[\mathcal{P}_{i,a}^{Y}\left(\hat{Y}_{i+1}^{R}+f\left(\left[X_{i}\right]_{X},I_{i},\hat{Y}_{i+1}^{R},\hat{\mathcal{Z}}_{i,a}^{R}\right)\Delta_{i}\right)\right]_{y}\\
\hat{Y}_{i}^{R} & =\esssup_{a\in A}\hat{\mathcal{Y}}_{i,a}^{R}
\end{cases}\label{eq:projectedBSDE2*}
\end{equation}
Then, we recall below some useful properties of the projection operators
$\mathcal{P}_{i,a}^{.}$.
\begin{lem}
For any fixed $a\in\mathcal{A}_{i}$: 
\begin{eqnarray}
\mathcal{P}_{i,a}^{.}\left(U\right) & = & \mathcal{P}_{i,a}^{.}\left(\mathbb{E}_{i,a}\left[U\right]\right)\,\,,\,\forall U\in\mathbf{L}_{2}\left(\mathcal{F}_{t_{i}},\mathbb{P}\right)\label{lem:projEsp}\\
\mathbb{E}\left[\left(\mathcal{P}_{i,a}^{.}\left(U\right)-\mathcal{P}_{i,a}^{.}\left(V\right)\right)^{2}\right] & \leq & \mathbb{E}\left[\left(U_{a}-V_{a}\right)^{2}\right]\,\,,\,\forall U,V\,\mathrm{in}\,\mathbf{L}_{2}\left(\mathcal{F}_{T},\mathbb{P}\right).\label{lem:projLip}
\end{eqnarray}
\end{lem}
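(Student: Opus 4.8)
The two statements are the standard facts that an $\mathbf{L}_2$-projection onto a linear subspace generated by $\sigma(X_i)$-measurable functions (1) only sees the conditional expectation of its argument with respect to $\sigma(X_i)$ under the law $\{I_i=a\}$, and (2) is a nonexpansive map. The plan is to prove \eqref{lem:projEsp} first, then deduce \eqref{lem:projLip} as a consequence of nonexpansiveness of orthogonal projections in Hilbert space together with \eqref{lem:projEsp}.

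For \eqref{lem:projEsp}, fix $a\in\mathcal{A}_i$ and recall that $\mathcal{P}_{i,a}^{Y}(U)=\hat\lambda_{i,a}^{Y}(U).p_i^{Y}(X_i,a)$ where $\hat\lambda_{i,a}^{Y}(U)$ minimizes $\lambda\mapsto\mathbb{E}[(\lambda.p_i^{Y}(X_i,a)-U_a)^2]$. Writing out the first-order (normal) equations for this quadratic minimization, the optimal $\hat\lambda_{i,a}^{Y}(U)$ is characterized by the orthogonality relations $\mathbb{E}[(\,\hat\lambda_{i,a}^{Y}(U).p_i^{Y}(X_i,a)-U_a\,)\,p_{i,j}^{Y}(X_i,a)]=0$ for each basis function $j$. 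Since $p_{i,j}^{Y}(X_i,a)$ is $\sigma(X_i)$-measurable (under the conditioning $\{I_i=a\}$, i.e. with respect to the measure $\mathbb{E}_{i,a}$), the tower property lets us replace $U_a$ by $\mathbb{E}_{i,a}[U]$ inside each of these equations without changing them. Hence $\hat\lambda_{i,a}^{Y}(U)$ and $\hat\lambda_{i,a}^{Y}(\mathbb{E}_{i,a}[U])$ solve the same normal equations, giving $\mathcal{P}_{i,a}^{Y}(U)=\mathcal{P}_{i,a}^{Y}(\mathbb{E}_{i,a}[U])$; the argument for $\mathcal{P}_{i,a}^{Z}$ is identical component by component. (When the Gram matrix is singular one simply fixes one minimizer; the projected value is still unique, so the identity is unaffected.)

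For \eqref{lem:projLip}, note that under the probability measure obtained by conditioning on $\{I_i=a\}$, the map $U\mapsto\mathcal{P}_{i,a}^{Y}(U)$ is, by definition, the orthogonal projection in $\mathbf{L}_2$ of $U_a$ onto the closed linear subspace $\mathrm{span}\{p_{i,j}^{Y}(X_i,a)\}$, which is $\mathcal{S}_{i}^{Y}$ restricted to that measure. Orthogonal projections onto closed subspaces of a Hilbert space are $1$-Lipschitz, so $\mathbb{E}[(\mathcal{P}_{i,a}^{Y}(U)-\mathcal{P}_{i,a}^{Y}(V))^2]=\mathbb{E}[(\mathcal{P}_{i,a}^{Y}(U_a)-\mathcal{P}_{i,a}^{Y}(V_a))^2]\le\mathbb{E}[(U_a-V_a)^2]$, using linearity of $\mathcal{P}_{i,a}^{Y}$ and \eqref{lem:projEsp} in the first equality. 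The same bound holds for each $\mathcal{P}_{i,a}^{Z,k}$, which gives \eqref{lem:projLip}.

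There is no serious obstacle here — both parts are textbook Hilbert-space projection facts. The only point requiring a little care is bookkeeping around the notation $U_a$ versus $U$ and the conditioning $\{I_i=a\}$: one must be consistent that $\mathcal{P}_{i,a}^{.}$ is genuinely an orthogonal projection with respect to the conditioned measure (so that nonexpansiveness applies), and that $\mathbb{E}_{i,a}$ is the conditional expectation with respect to $\sigma(X_i)$ under that same measure (so that the tower property collapses $U_a$ to $\mathbb{E}_{i,a}[U]$ in the normal equations). The degenerate-Gram-matrix case should be mentioned in one sentence so that "$\arg\inf$" is well-defined as a set with a unique projected image.
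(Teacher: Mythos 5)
Your proof is correct; note that the paper itself gives no argument here, simply deferring to \cite{Gobet11}, so your write-up is strictly more content than what appears in the text. The two ingredients you use --- the normal equations plus the tower property for \eqref{lem:projEsp}, and linearity plus the contraction property of orthogonal projections for \eqref{lem:projLip} --- are exactly the standard facts the cited reference contains, and they apply verbatim to the finite-dimensional spans $\mathcal{S}_i^{Y}$, $\mathcal{S}_i^{Z,k}$ used in the paper. The one point worth making explicit (you flag it, but it deserves a sentence rather than a parenthesis) is which measure the Hilbert-space structure lives on: $\mathcal{P}_{i,a}^{.}$ is an orthogonal projection in $\mathbf{L}_2$ of the law conditioned on $\{I_i=a\}$, whereas the outer expectations in \eqref{lem:projLip} are written under $\mathbb{P}$; the inequality as stated is really the contraction estimate under the conditioned measure, with $U_a$, $V_a$ and $\mathcal{P}_{i,a}^{.}(U)$, $\mathcal{P}_{i,a}^{.}(V)$ all read under that same measure (this is consistent with how the paper uses the bound in the subsequent proposition, where every application is of the form $\mathbb{E}[\,|\mathbb{E}_{i,a}[\cdot]|^2\,]$ with $a$ then taken as $\esssup$). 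Your remark on the possibly singular Gram matrix --- the minimizer may be non-unique but the projected image is --- is also the right thing to say and is not addressed in the paper.
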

\begin{proof}
The proof can be found in \cite{Gobet11}.
\end{proof}
We now assess the error between $\left(Y^{R},Z^{R}\right)$ and $\left(\hat{Y}^{R},\hat{Z}^{R}\right)$.
\begin{prop}
{[}projection error{]} The following bound holds:
\begin{align*}
\mathbb{E}\left[\left|Y_{i}^{R}-\hat{Y}{}_{i}^{R}\right|^{2}\right]\,,\,\Delta_{i}\mathbb{E}\left[\left|Z_{i}^{R}-\hat{Z}{}_{i}^{R}\right|^{2}\right]\leq & e^{C\left(T-t_{i}\right)}\sum_{k=i}^{N-1}\left\{ \mathbb{E}\left[\left(\left|\Delta\mathcal{P}\mathcal{Y}_{k}\right|^{2}\right)^{*}\right]+C\Delta_{k}\mathbb{E}\left[\left(\left|\Delta\mathcal{P}\mathcal{Z}_{k}\right|^{2}\right)^{*}\right]\right\} 
\end{align*}
where $C:=2L_{f}^{2}\left(\left|\pi\right|+q\right)+\frac{1}{q}$,
and $\left(\left|\Delta\mathcal{P}\mathcal{Y}_{k}\right|^{2}\right)^{*}$
(resp. \textup{$\left(\left|\Delta\mathcal{P}\mathcal{Z}_{k}\right|^{2}\right)^{*}$),
$k\geq i$, is solution of the linear constrained BSDE:}
\textup{
\[
\begin{cases}
Y_{k} & =\esssup_{a\in A}\mathbb{E}_{k,a}\left[\left|\mathcal{Y}_{k}^{R}-\mathcal{P}_{k}^{Y}\left(\mathcal{Y}_{k}^{R}\right)\right|^{2}\right]\,,\,\,(\mathrm{resp}.\,\esssup_{a\in A}\mathbb{E}_{k,a}\left[\left|\mathcal{Z}_{k}^{R}-\mathcal{P}_{k}^{Z}\left(\mathcal{Z}_{k}^{R}\right)\right|^{2}\right])\\
Y_{j} & =\esssup_{a\in A}\mathbb{E}_{j,a}\left[Y_{j+1}\right]\,,\,\, j=k-1,\ldots,i
\end{cases}
\]
}Moreover, the same upper bound holds for $\mathbb{E}\left[\esssup_{a\in A}\left|\mathcal{Y}_{i,a}^{R}-\hat{\mathcal{Y}}{}_{i,a}^{R}\right|^{2}\right]$
and $\Delta_{i}\mathbb{E}\left[\esssup_{a\in A}\left|\mathcal{Z}_{i,a}^{R}-\hat{\mathcal{Z}}{}_{i,a}^{R}\right|^{2}\right]$.\end{prop}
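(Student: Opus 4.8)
The plan is to run the same backward induction as for the preceding proposition, now with the localized scheme \eqref{eq:localizedBSDE} in the role of the reference scheme and \eqref{eq:projectedBSDE2*} in that of the perturbed one; the new ingredients are the two properties \eqref{lem:projEsp}--\eqref{lem:projLip} of the regression operators $\mathcal P_{i,a}^{\cdot}$ and, crucially, the $\mathbf{L}_2$-orthogonality of the projection residual to the \emph{linear} space $\mathcal S_{i,a}^{\cdot}$. I introduce the $a$-indexed version of \eqref{eq:localizedBSDE}, $\Delta_i\mathcal Z_{i,a}^R:=\mathbb E_{i,a}\big[Y_{i+1}^R[\Delta W_i^{\top}]_w\big]$ and $\mathcal Y_{i,a}^R:=\mathbb E_{i,a}[\mathcal Y_i^R]=\mathbb E_{i,a}\big[Y_{i+1}^R+f([X_i]_X,a,Y_{i+1}^R,\mathcal Z_{i,a}^R)\Delta_i\big]$, so that $Y_i^R=\esssup_{a\in A}\mathcal Y_{i,a}^R$, and let $\bar{\mathcal Z}_{i,a}^R,\bar{\mathcal Y}_{i,a}^R$ be the analogous quantities with $(\hat Y^R,\hat{\mathcal Z}^R)$ in place of $(Y^R,\mathcal Z^R)$, so that by \eqref{lem:projEsp} the perturbed scheme \eqref{eq:projectedBSDE2*} reads $\hat{\mathcal Y}_{i,a}^R=[\,\cdot\,]_y\big(\mathcal P_{i,a}^Y(\bar{\mathcal Y}_{i,a}^R)\big)$, $\Delta_i\hat{\mathcal Z}_{i,a}^R=[\,\cdot\,]_{i,z}\big(\mathcal P_{i,a}^Z(\Delta_i\bar{\mathcal Z}_{i,a}^R)\big)$ and $\hat Y_i^R=\esssup_{a\in A}\hat{\mathcal Y}_{i,a}^R$. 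Write $\Delta Y_{i+1}:=Y_{i+1}^R-\hat Y_{i+1}^R$, $\Delta\mathcal Z_{i,a}:=\mathcal Z_{i,a}^R-\hat{\mathcal Z}_{i,a}^R$; at the terminal date $\Delta Y_N=0$. Since the $\mathcal P_{i,a}^{\cdot}$ are non-local, the contraction \eqref{lem:projLip} is only usable under $\mathbb E[\,\cdot\,]$, so I carry the recursion on $\mathbb E\big[\esssup_{a\in A}|\mathcal Y_{i,a}^R-\hat{\mathcal Y}_{i,a}^R|^2\big]$ and $\Delta_i\mathbb E\big[\esssup_{a\in A}|\Delta\mathcal Z_{i,a}|^2\big]$, reducing each $\esssup_{a\in A}$ to a single $\sigma(X_i)$-measurable control by measurable selection before invoking \eqref{lem:projEsp}--\eqref{lem:projLip}; the bounds for $(Y^R,Z^R)$ then follow from $|Y_i^R-\hat Y_i^R|\le\esssup_{a\in A}|\mathcal Y_{i,a}^R-\hat{\mathcal Y}_{i,a}^R|$ and the analogous argument for $Z^R$.

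For the $\mathcal Z$-step, $|\Delta_i\mathcal Z_{i,a}^R|\le\Delta_iC_z$ by Lemma \ref{lem:asBounds}, so the $1$-Lipschitz truncation $[\,\cdot\,]_{i,z}$ fixes $\Delta_i\mathcal Z_{i,a}^R$ and $\Delta_i^2|\Delta\mathcal Z_{i,a}|^2\le\big|\Delta_i\mathcal Z_{i,a}^R-\mathcal P_{i,a}^Z(\Delta_i\bar{\mathcal Z}_{i,a}^R)\big|^2$; writing the right-hand side as $\big(\Delta_i\mathcal Z_{i,a}^R-\mathcal P_{i,a}^Z(\Delta_i\mathcal Z_{i,a}^R)\big)+\big(\mathcal P_{i,a}^Z(\Delta_i\mathcal Z_{i,a}^R)-\mathcal P_{i,a}^Z(\Delta_i\bar{\mathcal Z}_{i,a}^R)\big)$, the first bracket is orthogonal to $\mathcal S_{i,a}^Z$ and the second lies in it, so Pythagoras applies; the first bracket equals $\Delta_i^2$ times the $\mathcal Z$-residual, and the second is controlled by \eqref{lem:projLip} through $\Delta_i\mathcal Z_{i,a}^R-\Delta_i\bar{\mathcal Z}_{i,a}^R=\mathbb E_{i,a}\big[(\Delta Y_{i+1}-\mathbb E_{i,a}[\Delta Y_{i+1}])[\Delta W_i^{\top}]_w\big]$ (using $\mathbb E_{i,a}[[\Delta W_i]_w]=0$), to which Cauchy--Schwarz with $\mathbb E_{i,a}[([\Delta W_{k,i}]_w)^2]\le\Delta_i$ gives $\le q\Delta_i\,\mathrm{Var}_{i,a}(\Delta Y_{i+1})$. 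After selection this yields, with $(|\Delta\mathcal P\mathcal Z_i|^2)^{*}$ at level $i$ equal to $\esssup_{a\in A}\mathbb E_{i,a}[|\mathcal Z_i^R-\mathcal P_i^Z(\mathcal Z_i^R)|^2]$,
\[
\Delta_i\,\mathbb E\big[\esssup_{a\in A}|\Delta\mathcal Z_{i,a}|^2\big]\ \le\ \Delta_i\,\mathbb E\big[\big(|\Delta\mathcal P\mathcal Z_i|^2\big)^{*}\big]+q\,\mathbb E\big[\esssup_{a\in A}\mathrm{Var}_{i,a}(\Delta Y_{i+1})\big].
\]

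For the $\mathcal Y$-step, $|\mathcal Y_{i,a}^R|\le C_y$, so $[\,\cdot\,]_y$ fixes $\mathcal Y_{i,a}^R$ and $|\mathcal Y_{i,a}^R-\hat{\mathcal Y}_{i,a}^R|\le\big|\mathcal Y_{i,a}^R-\mathcal P_{i,a}^Y(\bar{\mathcal Y}_{i,a}^R)\big|$; the same Pythagoras splitting produces, in expectation, the sum of the $\mathcal Y$-residual $\esssup_{a\in A}\mathbb E_{i,a}[|\mathcal Y_i^R-\mathcal P_i^Y(\mathcal Y_i^R)|^2]=(|\Delta\mathcal P\mathcal Y_i|^2)^{*}$ at level $i$ — here it is essential to use Pythagoras and not a Young split, otherwise the residual would enter the recursion with an $O(|\pi|^{-1})$ rather than an $O(1)$ weight — and $\mathbb E\big[\esssup_{a\in A}\big|\mathcal P_{i,a}^Y(\mathcal Y_{i,a}^R)-\mathcal P_{i,a}^Y(\bar{\mathcal Y}_{i,a}^R)\big|^2\big]\le\mathbb E\big[\esssup_{a\in A}|\mathcal Y_{i,a}^R-\bar{\mathcal Y}_{i,a}^R|^2\big]$ by \eqref{lem:projLip}. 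Since $\mathcal Y_{i,a}^R-\bar{\mathcal Y}_{i,a}^R=\mathbb E_{i,a}\big[\Delta Y_{i+1}+\{f([X_i]_X,a,Y_{i+1}^R,\mathcal Z_{i,a}^R)-f([X_i]_X,a,\hat Y_{i+1}^R,\hat{\mathcal Z}_{i,a}^R)\}\Delta_i\big]$, the same manipulations as in Lemma \ref{lem:asBounds} — Jensen, Young with parameter $\gamma\Delta_i$, the Lipschitz bound $L_f(|\Delta Y_{i+1}|+|\Delta\mathcal Z_{i,a}|)$ on the $f$-increment, substitution of the $\mathcal Z$-estimate, and cancellation of the $\mathbb E_{i,a}[\Delta Y_{i+1}]^2$-terms coming out of Young against the $-\mathbb E_{i,a}[\Delta Y_{i+1}]^2$ hidden in $\mathrm{Var}_{i,a}(\Delta Y_{i+1})$, for the choice $\gamma=2qL_f^2$ (and using $\Delta_i\le|\pi|$ to collect the constants) — give the one-step estimate
\[
\mathbb E\big[\esssup_{a\in A}|\mathcal Y_{i,a}^R-\hat{\mathcal Y}_{i,a}^R|^2\big]\ \le\ (1+C\Delta_i)\,\mathbb E\big[\esssup_{a\in A}\mathbb E_{i,a}\big[\esssup_{b\in A}|\mathcal Y_{i+1,b}^R-\hat{\mathcal Y}_{i+1,b}^R|^2\big]\big]+\mathbb E\big[\big(|\Delta\mathcal P\mathcal Y_i|^2\big)^{*}\big]+C\Delta_i\,\mathbb E\big[\big(|\Delta\mathcal P\mathcal Z_i|^2\big)^{*}\big],
\]
with $C=2L_f^2(|\pi|+q)+\tfrac1q$. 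Iterating backward from $N$ (where the left side vanishes), recognizing the iterated operator $W\mapsto\esssup_{a\in A}\mathbb E_{i,a}[W]$ down to level $i$ as exactly the $(\cdot)^{*}$-operation of the statement, and bounding the products $\Gamma_i^{j}=\prod_{k}(1+C\Delta_k)$ and their weighted sums by $e^{C(T-t_i)}$ and $\tfrac{e^{C|\pi|}}{C}\big(e^{C(T-t_i)}-1\big)$ as in \eqref{eq:GammaBound}--\eqref{eq:GammaIntBound}, gives the stated bound for $\mathbb E[\esssup_{a\in A}|\mathcal Y_{i,a}^R-\hat{\mathcal Y}_{i,a}^R|^2]$, hence for $\mathbb E[|Y_i^R-\hat Y_i^R|^2]$; feeding the solved $\mathcal Y$-bound back into the $\mathcal Z$-estimate above gives the bound for $\Delta_i\mathbb E[|Z_i^R-\hat Z_i^R|^2]$ and for the corresponding $\esssup_{a\in A}$-quantities.

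The delicate point — the one I expect to need the most care — is making this last iteration rigorous: the one-step estimate is genuinely an $\mathbb E[\,\cdot\,]$-inequality (because \eqref{lem:projLip} is a mean, not a pathwise, contraction for the non-local operators $\mathcal P_{i,a}^{\cdot}$), whereas the $(\cdot)^{*}$-propagation $W\mapsto\esssup_{a\in A}\mathbb E_{i,a}[W]$ is a pathwise monotone operation that strictly increases $\mathbb E[W]$, so one must keep the conditional-expectation-squared terms $\mathbb E_{i,a}[\Delta Y_{i+1}]^2$ alive through the recursion (so that they cancel the variance term produced by the $\mathcal Z$-estimate and keep the exponent equal to $C$ instead of letting it blow up) while choosing, at each time step, a measurable selection of the optimizing control compatible with \eqref{lem:projEsp}--\eqref{lem:projLip}, which are stated for arbitrary $a\in\mathcal A_i$. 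Everything else is a transcription of the computations already carried out for Lemma \ref{lem:asBounds} and the preceding proposition.
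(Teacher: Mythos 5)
Your proposal is correct and follows essentially the same route as the paper: the $a$-indexed rewriting of the two schemes, the $1$-Lipschitz truncations, the Pythagoras split of the projection error into residual plus projected difference, the properties \eqref{lem:projEsp}--\eqref{lem:projLip}, the Cauchy--Schwarz variance bound for the $\mathcal Z$-step, the Jensen/Young grouping with $\gamma=2qL_f^2$, and the backward iteration through the $\esssup_{a}\mathbb{E}_{i,a}[\cdot]$ operator yielding the $(\cdot)^{*}$ quantities and the constant $C=2L_f^2(|\pi|+q)+\tfrac1q$. The measurable-selection point you flag is a genuine (minor) gap that the paper itself glosses over by stating its one-step inequality \eqref{eq:DeltaYA} ``for every $a\in\mathcal A_i$ on its left-hand side'' before passing to the essential supremum.
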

\begin{proof}
Fix $a\in\mathcal{A}_{i}$. Define $\Delta Y_{i}^{R}=Y_{i}^{R}-\hat{Y}_{i}^{R}$,
$\Delta\mathcal{Y}_{i,a}^{R}=\mathcal{Y}_{i,a}^{R}-\hat{\mathcal{Y}}_{i,a}^{R}$,
$\Delta Z_{i}^{R}=Z_{i}^{R}-\hat{Z}_{i}^{R}$ and $\Delta\mathcal{Z}_{i,a}^{R}=\mathcal{Z}_{i,a}^{R}-\hat{\mathcal{Z}}_{i,a}^{R}$,
where, as in equation \eqref{eq:regrCoefA}, $\mathcal{Y}_{i,a}^{R}$
(resp. $\mathcal{Z}_{i,a}^{R}$) stands for the conditional variable
$\mathcal{Y}_{i}^{R}\left|\left\{ I_{i}=a\right\} \right.$ (resp.
$\mathcal{Z}_{i}^{R}\left|\left\{ I_{i}=a\right\} \right.$).

First, using that $\Delta_{i}\mathcal{Z}_{i,a}^{R}=\left[\Delta_{i}\mathcal{Z}_{i,a}^{R}\right]_{i,z}$
and the $1$-Lipschitz property of $\left[.\right]_{i,z}$:
\[
\left|\Delta_{i}\Delta\mathcal{Z}_{i,a}^{R}\right|^{2}\leq\left|\Delta_{i}\mathcal{Z}_{i,a}^{R}-\mathcal{P}_{i,a}^{Z}\left(\hat{Y}_{i+1}^{R}\left[\Delta W_{i}^{\top}\right]_{w}\right)\right|^{2}
\]
Using Pythagoras' theorem:
\[
\mathbb{E}\left[\left|\Delta_{i}\Delta\mathcal{Z}_{i,a}^{R}\right|^{2}\right]=\mathbb{E}\left[\left|\Delta_{i}\mathcal{Z}_{i,a}^{R}-\mathcal{P}_{i,a}^{Z}\left(\Delta_{i}\mathcal{Z}_{i,a}^{R}\right)\right|^{2}\right]+\mathbb{E}\left[\left|\mathcal{P}_{i,a}^{Z}\left(\Delta_{i}\mathcal{Z}_{i,a}^{R}\right)-\mathcal{P}_{i,a}^{Z}\left(\hat{Y}_{i+1}^{R}\left[\Delta W_{i}^{\top}\right]_{w}\right)\right|^{2}\right]
\]
where, using equation \eqref{lem:projEsp}:
\begin{align*}
 & \mathcal{P}_{i,a}^{Z}\left(\hat{Y}_{i+1}^{R}\left[\Delta W_{i}^{\top}\right]_{w}\right)=\mathcal{P}_{i,a}^{Z}\left(\mathbb{E}_{i,a}\left[\hat{Y}_{i+1}^{R}\left[\Delta W_{i}^{\top}\right]_{w}\right]\right)\\
= & \mathcal{P}_{i,a}^{Z}\left(\mathbb{E}_{i,a}\left[\left(\hat{Y}_{i+1}^{R}-\mathbb{E}_{i,a}\left[\hat{Y}_{i+1}^{R}\right]\right)\left[\Delta W_{i}^{\top}\right]_{w}\right]\right)
\end{align*}
Then, using equation \eqref{lem:projLip}:
\begin{align*}
\mathbb{E}\left[\left|\mathcal{P}_{i,a}^{Z}\left(\Delta_{i}\mathcal{Z}_{i,a}^{R}\right)-\mathcal{P}_{i,a}^{Z}\left(\hat{Y}_{i+1}^{R}\left[\Delta W_{i}^{\top}\right]_{w}\right)\right|^{2}\right] & \leq\mathbb{E}\left[\left|\mathbb{E}_{i,a}\left[\Delta_{i}\mathcal{Z}_{i,a}^{R}\right]-\mathbb{E}_{i,a}\left[\left(\hat{Y}_{i+1}^{R}-\mathbb{E}_{i,a}\left[\hat{Y}_{i+1}^{R}\right]\right)\left[\Delta W_{i}^{\top}\right]_{w}\right]\right|^{2}\right]\\
 & =\mathbb{E}\left[\left|\mathbb{E}_{i,a}\left[\left(\Delta Y_{i+1}^{R}-\mathbb{E}_{i,a}\left[\Delta Y_{i+1}^{R}\right]\right)\left[\Delta W_{i}^{\top}\right]_{w}\right]\right|^{2}\right]\\
 & \leq q\Delta_{i}\mathbb{E}\left[\mathbb{E}_{i,a}\left[\left(\Delta Y_{i+1}^{R}\right)^{2}\right]-\mathbb{E}_{i,a}\left[\Delta Y_{i+1}^{R}\right]^{2}\right]
\end{align*}
To sum up for the $Z$ component:
\[
\Delta_{i}\mathbb{E}\left[\left|\Delta\mathcal{Z}_{i,a}^{R}\right|^{2}\right]\leq\Delta_{i}\mathbb{E}\left[\left|\mathcal{Z}_{i,a}^{R}-\mathcal{P}_{i,a}^{Z}\left(\mathcal{Z}_{i,a}^{R}\right)\right|^{2}\right]+q\mathbb{E}\left[\mathbb{E}_{i,a}\left[\left(\Delta Y_{i+1}^{R}\right)^{2}\right]-\mathbb{E}_{i,a}\left[\Delta Y_{i+1}^{R}\right]^{2}\right]
\]
For the $Y$ component, start similarly by using the $1$-Lipschitz
property of $\left[.\right]_{y}$ and Pythagoras' theorem:
\[
\mathbb{E}\left[\left|\Delta\mathcal{Y}_{i,a}^{R}\right|^{2}\right]=\mathbb{E}\left[\left(\mathcal{Y}_{i,a}^{R}-\mathcal{P}_{i,a}^{Y}\left(\mathcal{Y}_{i,a}^{R}\right)\right)^{2}\right]+\mathbb{E}\left[\left(\mathcal{P}_{i,a}^{Y}\left(\mathcal{Y}_{i,a}^{R}\right)-\mathcal{P}_{i,a}^{Y}\left(\hat{Y}_{i+1}^{R}+f\left(\left[X_{i}\right]_{X},I_{i},\hat{Y}_{i+1}^{R},\hat{\mathcal{Z}}_{i,a}^{R}\right)\Delta_{i}\right)\right)^{2}\right]
\]
And then, using again equations \eqref{lem:projEsp}, \eqref{lem:projLip},
Jensen's inequality and Young's inequality with parameter $\gamma\Delta_{i}$,
$\gamma>0$:
\begin{align*}
 & \mathbb{E}\left[\left(\mathcal{P}_{i,a}^{Y}\left(\mathcal{Y}_{i,a}^{R}\right)-\mathcal{P}_{i,a}^{Y}\left(\hat{Y}_{i+1}^{R}+f\left(\left[X_{i}\right]_{X},I_{i},\hat{Y}_{i+1}^{R},\hat{\mathcal{Z}}_{i,a}^{R}\right)\Delta_{i}\right)\right)^{2}\right]\\
 & \leq\mathbb{E}\left[\left(\mathbb{E}_{i,a}\left[\Delta Y_{i+1}^{R}+L_{f}\left(\left|\Delta Y_{i+1}^{R}\right|+\left|\Delta\mathcal{Z}_{i,a}^{R}\right|\right)\Delta_{i}\right]\right)^{2}\right]\\
 & \leq\mathbb{E}\left[\left(1+\gamma\Delta_{i}\right)\mathbb{E}_{i,a}\left[\Delta Y_{i+1}^{R}\right]^{2}+\left(1+\frac{1}{\gamma\Delta_{i}}\right)\Delta_{i}^{2}L_{f}^{2}2\left\{ \mathbb{E}_{i,a}\left[\left(\Delta Y_{i+1}^{R}\right)^{2}\right]+\mathbb{E}_{i,a}\left[\left|\Delta\mathcal{Z}_{i,a}^{R}\right|^{2}\right]\right\} \right]\\
 & \leq\left(\Delta_{i}+\frac{1}{\gamma}\right)\mathbb{E}\left[\left(\gamma-2qL_{f}^{2}\right)\mathbb{E}_{i,a}\left[\Delta Y_{i+1}^{R}\right]^{2}+2L_{f}^{2}\left(\Delta_{i}+q\right)\mathbb{E}_{i,a}\left[\left(\Delta Y_{i+1}^{R}\right)^{2}\right]+2L_{f}^{2}\Delta_{i}\left|\mathcal{Z}_{i,a}^{R}-\mathcal{P}_{i,a}^{Z}\left(\mathcal{Z}_{i,a}^{R}\right)\right|^{2}\right]
\end{align*}
For all $\gamma\geq2qL_{f}^{2}$, one can group together the terms
involving $\mathbb{E}_{i,a}\left[Y_{i+1}^{R}\right]^{2}$ and $\mathbb{E}_{i,a}\left[\left(Y_{i+1}^{R}\right)^{2}\right]$
using Jensen's inequality:
\begin{align}
\mathbb{E}\left[\left|\Delta\mathcal{Y}_{i,a}^{R}\right|^{2}\right]\leq & \mathbb{E}\left[\left(\mathcal{Y}_{i,a}^{R}-\mathcal{P}_{i,a}^{Y}\left(\mathcal{Y}_{i,a}^{R}\right)\right)^{2}\right]+\left(1+\theta\left(2,\gamma\right)\Delta_{i}\right)\mathbb{E}\left[\mathbb{E}_{i,a}\left[\left(\Delta Y_{i+1}^{R}\right)^{2}\right]\right]\nonumber \\
 & +2L_{f}^{2}\left(\left|\pi\right|+\frac{1}{\gamma}\right)\Delta_{i}\mathbb{E}\left[\left|\mathcal{Z}_{i,a}^{R}-\mathcal{P}_{i,a}^{Z}\left(\mathcal{Z}_{i,a}^{R}\right)\right|^{2}\right]\nonumber \\
\leq & \mathbb{E}\left[\esssup_{a\in A}\left(\mathcal{Y}_{i,a}^{R}-\mathcal{P}_{i,a}^{Y}\left(\mathcal{Y}_{i,a}^{R}\right)\right)^{2}\right]+\left(1+\theta\left(2,\gamma\right)\Delta_{i}\right)\mathbb{E}
\left[\esssup_{a\in A}\mathbb{E}_{i,a}\left[\left(\Delta Y_{i+1}^{R}\right)^{2}\right]\right]\label{eq:DeltaYA}\\
 & +2L_{f}^{2}\left(\left|\pi\right|+\frac{1}{\gamma}\right)\Delta_{i}\mathbb{E}\left[\esssup_{a\in A}\left|\mathcal{Z}_{i,a}^{R}-\mathcal{P}_{i,a}^{Z}\left(\mathcal{Z}_{i,a}^{R}\right)\right|^{2}\right]\nonumber 
\end{align}
where $\theta\left(c,\gamma\right)=\gamma+cL_{f}^{2}\left(\left|\pi\right|+\frac{1}{\gamma}\right)$. 

Therefore, as equation \eqref{eq:DeltaYA} is true for every $a\in\mathcal{A}_{i}$
on its left-hand side, and as $\left|\Delta Y_{i+1}^{R}\right|^{2}\leq\esssup_{a\in A}\left|\Delta\mathcal{Y}_{i+1,a}^{R}\right|^{2}$,
the following holds by induction:
\begin{align*}
\mathbb{E}\left[\esssup_{a\in A}\left|\Delta\mathcal{Y}_{i,a}^{R}\right|^{2}\right]\leq & \sum_{k=i}^{N-1}\Gamma_{i}^{k}\left(2,\gamma\right)\left\{ \mathbb{E}\left[\left(\left|\Delta\mathcal{P}\mathcal{Y}_{k}\right|^{2}\right)^{*}\right]+2L_{f}^{2}\left(\left|\pi\right|+\frac{1}{\gamma}\right)\Delta_{k}\mathbb{E}\left[\left(\left|\Delta\mathcal{P}\mathcal{Z}_{k}\right|^{2}\right)^{*}\right]\right\} 
\end{align*}
where $\Gamma_{i}^{j}\left(c,\gamma\right)=\Pi_{k=i}^{j}\left(1+\theta\left(c,\gamma\right)\Delta_{k}\right)\leq\exp\left(\theta\left(c,\gamma\right)\left(t_{j+1}-t_{i}\right)\right)$.
Finally, take $\gamma=2qL_{f}^{2}$ to obtain the desired bound for
$\left|\Delta\mathcal{Y}_{i,a}^{R}\right|^{2}$. Moreover, as $\left|\Delta Y_{i}^{R}\right|^{2}\leq\esssup_{a\in A}\left|\Delta\mathcal{Y}_{i,a}^{R}\right|^{2}$,
the same bound holds for $\left|\Delta Y_{i}^{R}\right|^{2}$. For
the bound on $\left|\Delta Z_{i}^{R}\right|^{2}$, use that:
\[
\Delta_{i}\mathbb{E}\left[\left|\Delta Z_{i}^{R}\right|^{2}\right]\leq\Delta_{i}\mathbb{E}\left[\esssup_{a\in A}\left|\Delta\mathcal{Z}_{i,a}^{R}\right|^{2}\right]\leq\Delta_{i}\mathbb{E}\left[\esssup_{a\in A}
\left|\mathcal{Z}_{i,a}^{R}-\mathcal{P}_{i,a}^{Z}\left(\mathcal{Z}_{i,a}^{R}\right)\right|^{2}\right]+\mathbb{E}\left[\esssup_{a\in A}\left|\Delta\mathcal{Y}_{i+1,a}^{R}\right|^{2}\right]
\]

\end{proof}

\section{Applications\label{sec:Applications}}

In this section, we test our numerical scheme on various examples.

\subsection{Linear Quadratic stochastic control problem}

The first application is an example of a linear-quadratic stochastic
control problem. We consider the following problem:
\begin{align}
v\left(t,x\right) & =\sup_{\alpha\in\mathcal{A}}\mathbb{E}\left[-\lambda_{0}\int_{t}^{T}\left(\alpha_{s}\right)^{2}ds-\lambda_{1}\left(X_{T}^{\alpha}\right)^{2}\right]\label{eq:ValueFct}\\
dX_{s}^{\alpha} & =\left(-\mu_{0}X_{s}^{\alpha}+\mu_{1}\alpha_{s}\right)dt+\left(\sigma_{0}+\sigma_{1}\alpha_{s}\right)dW_{s}\,,\, X_{0}^{\alpha}=0\label{eq:DiffDynamics}
\end{align}
where $\lambda_{i},\mu_{i},\sigma_{i}>0$, $i=1,2$. It is called
linear-quadratic because the drift and the volatility of $X^{\alpha}$
are linear in $\alpha$ and $X^{\alpha}$, while the terms in the
objective function $v$ are quadratic in $\alpha$ and $X^{\alpha}$.
We choose this example as a first, simple application for our numerical
scheme because there exists analytical solutions to this class of
stochastic control problem (cf. \cite{Yong99}) to which our results
can be compared in order to assess the accuracy of our method.

Now, let us look closer to this specific example. As can be seen from
equation \eqref{eq:ValueFct}, the objective function $v$ penalizes
the terminal value $X_{T}^{\alpha}$ of the controlled diffusion if
it is away from zero (with the $-\lambda_{1}\left(X_{T}^{\alpha}\right)^{2}$
term). Hence, $X^{\alpha}$, which starts from zero, has to be controlled
carefully over time so as not divert too much from this initial value.
This can be achieved through the control $\alpha$ in the drift term
($-\mu_{0}X_{s}^{\alpha}+\mu_{1}\alpha_{s}$), which can reinforce
the default mean-reversion speed $\mu_{0}$. However, this control
also impacts the volatility ($\sigma_{0}+\sigma_{1}\alpha_{s}$),
which makes it easier to decrease $X^{\alpha}$ than to increase it.
Moreover, the controls are penalized over time ($-\lambda_{0}\int_{t}^{T}\left(\alpha_{s}\right)^{2}ds$),
meaning that they must be exerted parsimoniously. 

We test our numerical scheme on this specific problem. We set the
parameters to the following values:

\noindent \begin{center}
\begin{tabular}{|c|c|c|c|c|c|c|}
\hline 
$\lambda_{0}$ & $\lambda_{1}$ & $\mu_{0}$ & $\mu_{1}$ & $\sigma_{0}$ & $\sigma_{1}$ & $T$\tabularnewline
\hline 
\hline 
$20$ & $200$ & $0.02$ & $0.5$ & $0.2$ & $0.1$ & $2$\tabularnewline
\hline 
\end{tabular}
\par\end{center}

For the numerical parameters, we use $n=52$ time-discretization steps,
and a sample of $M=10^{6}$ Monte Carlo simulations. For the regressions,
we use a basis function of global polynomial of degree two:
\[
\phi\left(t,x,\alpha\right)=\beta_{0}+\beta_{1}x+\beta_{2}\alpha+\beta_{3}x\alpha+\beta_{4}x^{2}+\beta_{5}\alpha^{2}\,.
\]
In particular, assuming $\beta_{5}<0$, the optimal control will be
linear w.r.t. $x$:
\begin{align*}
\alpha^{*} & =\alpha^{*}\left(t,x\right):=\arg\max_{\alpha}\phi\left(t,x,\alpha\right)=A\left(t\right)x+B\left(t\right)\\
A\left(t\right) & :=-\frac{\beta_{3}}{2\beta_{5}}\,\,\,,\,\,\, B\left(t\right):=-\frac{\beta_{2}}{2\beta_{5}}
\end{align*}
This behaviour is illustrated on Figure \ref{fig:OptimalControl}
below.

\begin{figure}[H]
\hspace{-5.1mm}\subfloat[\label{fig:optimalControlShape}Shape]{\includegraphics[width=0.41\paperwidth]{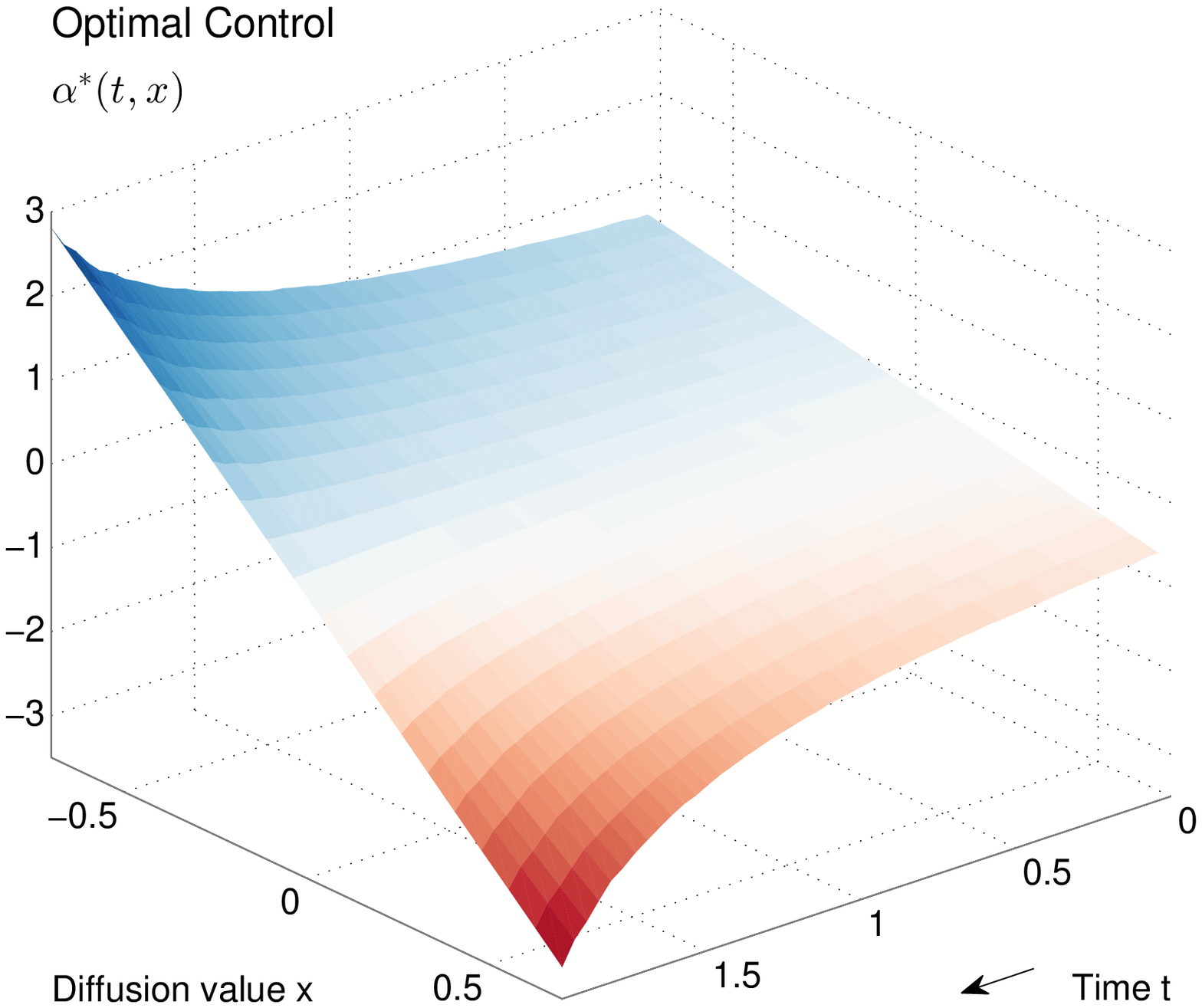}

}\hspace{-3mm}\subfloat[\label{fig:optimalCoefsComparison}Optimal coefficients vs. theoretical
values]{\includegraphics[width=0.41\paperwidth]{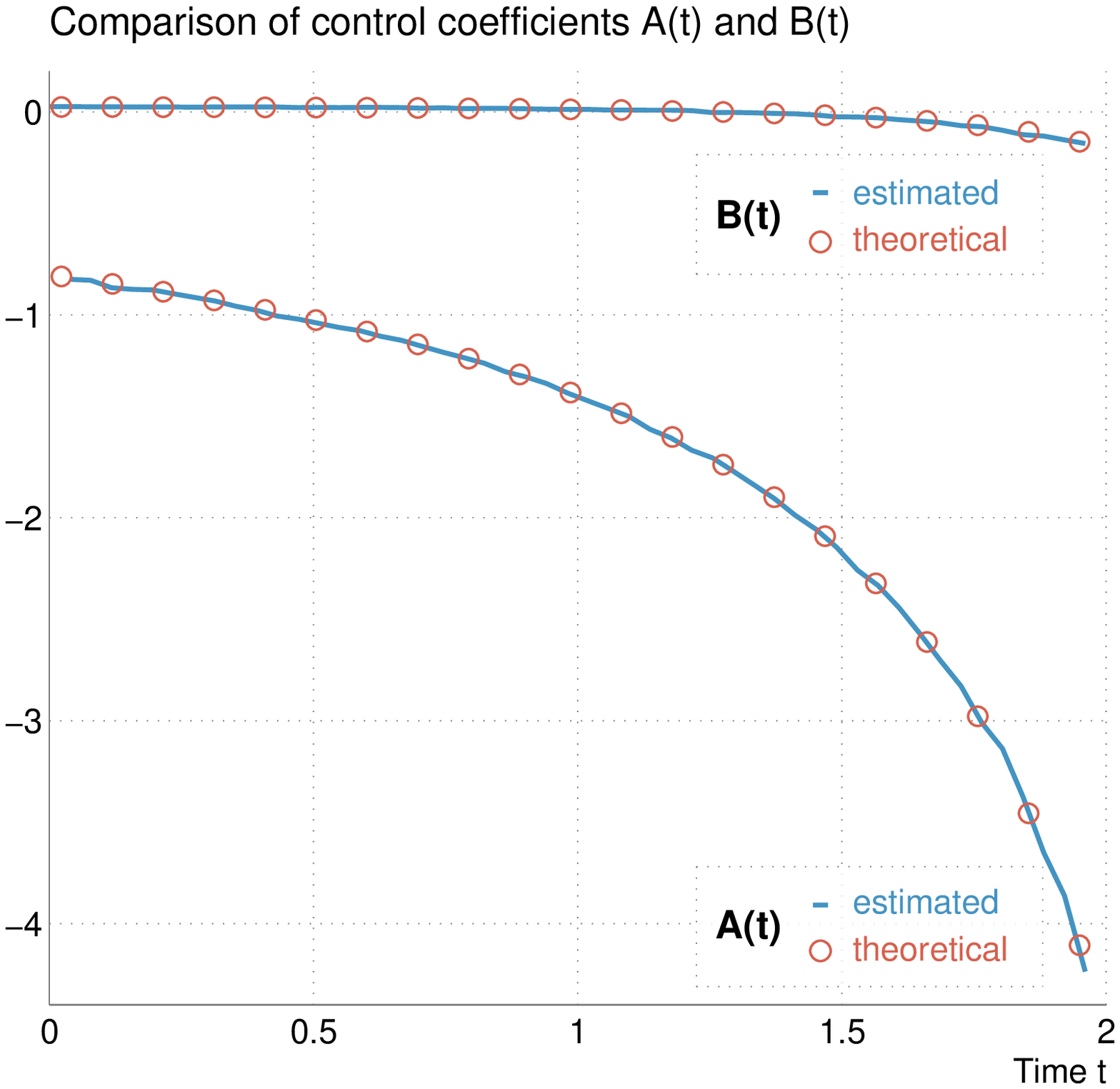}

}\caption{\label{fig:OptimalControl}Optimal control}
\end{figure}

Figure \ref{fig:optimalControlShape} displays the shape of the optimal
control $\alpha^{*}\left(t,x\right)$.

First, as expected from the drift term in the dynamics of $X^{\alpha}$
(equation \eqref{eq:DiffDynamics}), $\alpha^{*}$ is a decreasing
function of $x$ ($A\left(t\right)\leq0$):

- If $X_{t}^{\alpha}$ takes a large positive value, then $\alpha^{*}\left(t,X_{t}\right)$
will take a large negative value so as to push it back more quickly
to zero (recall the drift term $-\mu_{0}X_{s}^{\alpha}+\mu_{1}\alpha_{s}$).

- Conversely, if $X_{t}^{\alpha}$ takes a large negative value, then
$\alpha^{*}\left(t,X_{t}\right)$ will take a large positive value
for the same reason.

Second, the strength of the control increases as time reaches maturity
(i.e. $A\left(t\right)$ decreases with $t$). Indeed, the penalization
of the control becomes relatively cheaper compared with the penalization
of the final value when time is close to maturity.

The strengthening of the control can also be assessed on Figure \ref{fig:optimalCoefsComparison},
which displays the time evolution of the estimated coefficients $A$
and $B$ ($\alpha^{*}\left(t,x\right)=A\left(t\right)x+B\left(t\right)$).
Moreover, one can see that the coefficient $B$ is slightly negative
close to maturity. This creates an asymmetry in the control (as $\alpha^{*}\left(t,0\right)=B\left(t\right)\neq0$),
which comes from the asymmetric effect of the control on the volatility
of $X^{\alpha}$.

The effect of the optimal control $\alpha^{*}$ is clearly visible
on Figure \ref{fig:DiffusionEvolution} below, which compares the
distribution of $X^{\alpha}$ without control (Figure \ref{fig:uncontrolledDiffusion})
and when the optimal control is used (Figure\ref{fig:controlledDiffusion}).
The strengthening of the control at the end of the time period, as
well as the slightly asymmetric shape of the distribution are prominent. 

\begin{figure}[H]
\hspace{-5.1mm}\subfloat[\label{fig:uncontrolledDiffusion}Without control]{\includegraphics[width=0.41\paperwidth]{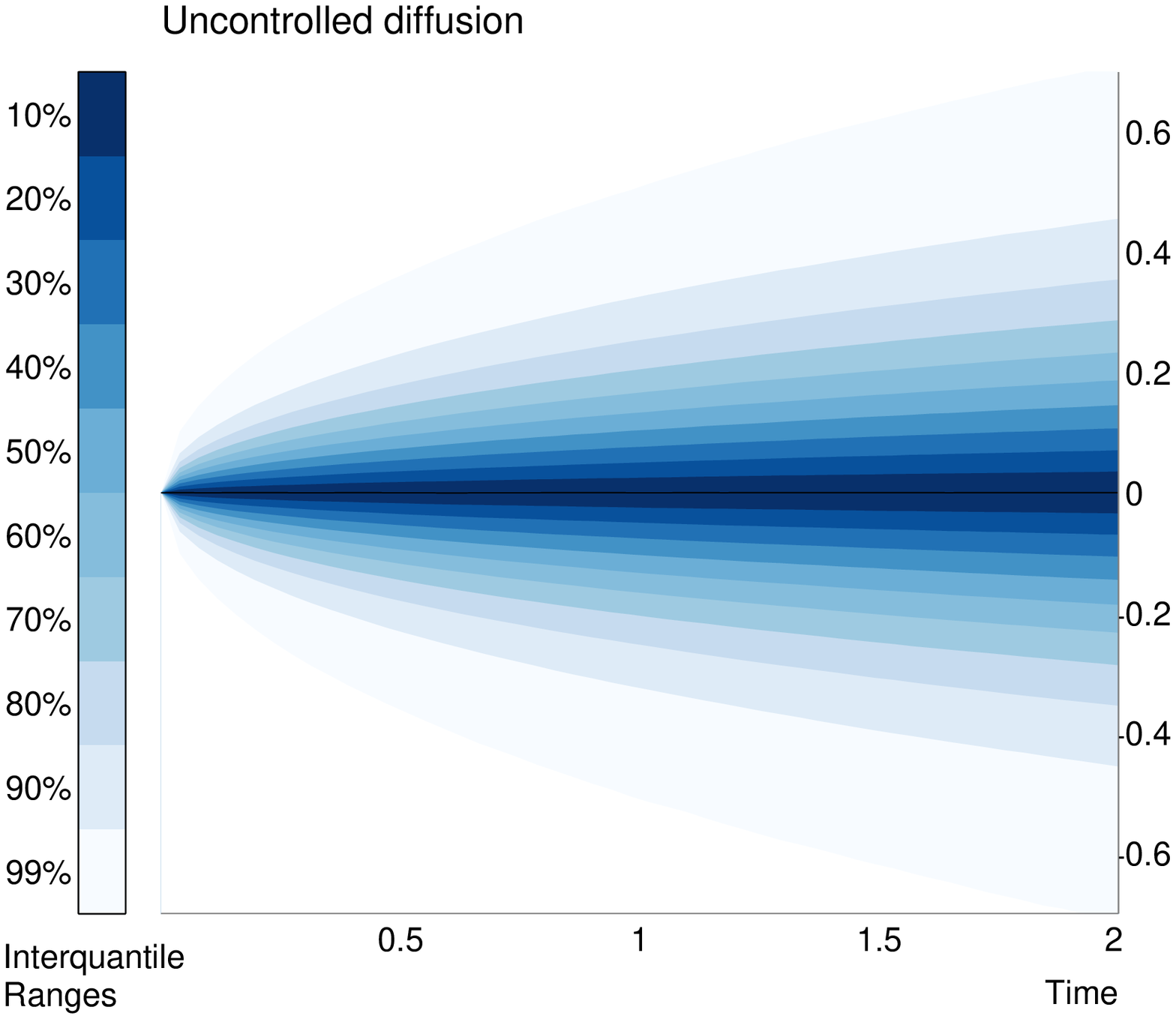}

}\hspace{-3mm}\subfloat[\label{fig:controlledDiffusion}With control]{\includegraphics[width=0.41\paperwidth]{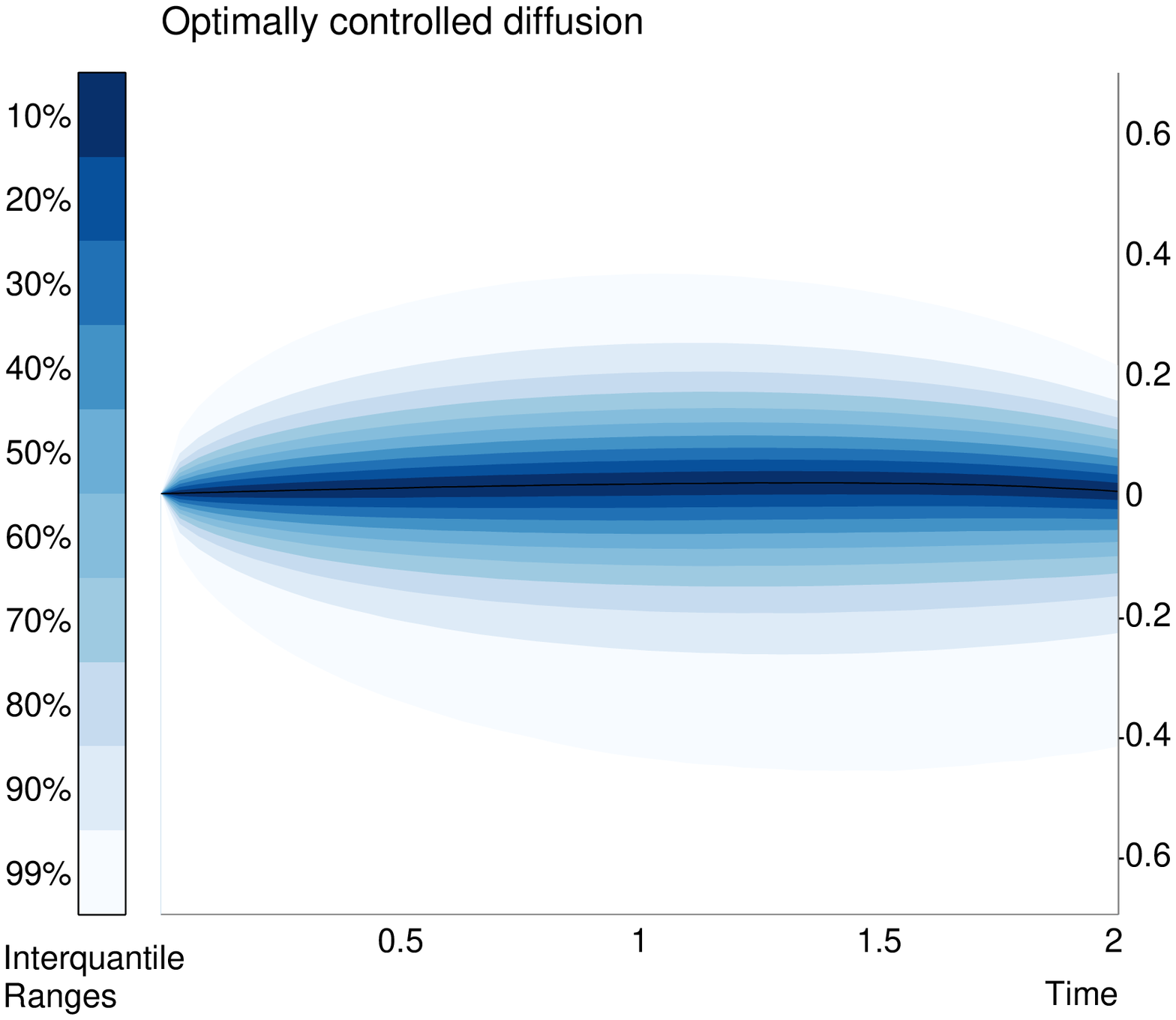}

}\caption{\label{fig:DiffusionEvolution}Time-evolution of the distribution
of the diffusion}
\end{figure}

Finally, regarding the accuracy of the method, the comparison between
the estimated coefficients and their theoretical values is reported
on Figure \ref{fig:optimalCoefsComparison}. Indeed an analytical
characterization of the solution of linear quadratic stochastic control
problems is available using ordinary differential equations (cf. \cite{Yong99}).
On our one-dimensional example \eqref{eq:ValueFct}, it is given by:
\begin{eqnarray*}
\alpha^{*}\left(t,X_{t}\right) & = & A\left(t\right)X_{t}+B\left(t\right)\\
A\left(t\right) & = & -\frac{\mu_{1}P\left(t\right)}{2\lambda_{0}+\sigma_{1}^{2}P\left(t\right)}\\
B\left(t\right) & = & -\frac{\mu_{1}}{2\lambda_{0}}Q\left(t\right)+A\left(t\right)\left(\frac{\sigma_{0}\sigma_{1}}{\mu_{1}}-\frac{\sigma_{1}^{2}}{2\lambda_{0}}Q\left(t\right)\right)
\end{eqnarray*}
where $P\left(t\right)$ and $Q\left(t\right)$ are the solutions
of the following ordinary differential equations:
\begin{eqnarray*}
P'\left(t\right) & = & 2\mu_{0}P\left(t\right)+\frac{\mu_{1}^{2}P^{2}\left(t\right)}{2\lambda_{0}+\sigma_{1}^{2}P\left(t\right)}\\
P\left(T\right) & = & 2\lambda_{1}\\
Q'\left(t\right) & = & \left(\mu_{0}+\frac{\mu_{1}^{2}P\left(t\right)}{2\lambda_{0}+\sigma_{1}^{2}P\left(t\right)}\right)Q\left(t\right)+\frac{\sigma_{0}\sigma_{1}\mu_{1}P^{2}\left(t\right)}{2\lambda_{0}+\sigma_{1}^{2}P\left(t\right)}\\
Q\left(T\right) & = & 0
\end{eqnarray*}
As can be seen from the comparison on Figure \ref{fig:optimalCoefsComparison},
our estimates of the control coefficients are very accurate. Regarding
the value function, our method provides the estimate $\hat{v}\left(0,0\right)=-5.761$.
The theoretical value being equal to $-5.705$, this means a relative
error of $1\%$.

\subsection{Uncertain volatility/correlation model\label{sub:UCM}}

The second application is the problem of pricing and hedging an option
under uncertain volatility.

Instead of specifying the parameters of the dynamics of an underlying
process, one can, for robustness, consider them uncertain. To some
extent, this parameter uncertainty provides hedging strategies that
are more robust to model risk (cf. \cite{Talay08}). To handle these
uncertain parameters, the usual approach is to resort to superhedging
strategies, that is, to find the smallest amount of money from which
it is possible to superreplicate the option, i.e. to build a strategy
that will almost surely provide an amount greater than (or equal to)
the payoff at the maturity of the option.

To compute these prices in practice, the most common approach is to
resort to numerical methods for partial differential equations. For
instance, \cite{Marabel11} computes the superhedging price under
uncertain correlation of a digital outperformance option using a finite
differences sheme. Unfortunately, these PDE methods suffer from the
curse of dimensionality, which means that they cannot handle many
state variables (no more than three in practice).

This is why a few authors tried recently to resort to Monte Carlo
techniques to solve this problem of pricing and hedging options under
uncertain volatility and/or correlation.

To our knowledge, the first attempt to do so was made in \cite{Mrad08}.
In this thesis, along the usual backward induction, the conditional
expectation are computed using the Malliavin calculus approach. This
approach uses the representation of conditional expectations in terms
of a suitable ratio of unconditional expectations. Then, to find the
optimal covariance matrix at each time step, an exhaustive comparison
is performed. Of course, this methodology works only if the set of
possible matrices is finite, which is the case when the optimal control
is of bang-bang type. For instance, it includes the case of unknown
correlations with known volatilies, but not the case when both volatilities
and correlations are unknown, a shortcoming that is acknowledged in
\cite{Mrad08}. This means that this methodology can only deal with
optimal switching problems, for which the control set is finite.

To overcome this limitation, \cite{Guyon11} propose to restrict the
maximization domain to a parameterized set of relevant functions,
indexed by a low-dimensional parameter. They then perform this much
simpler optimization inductively at each time step, by the downhill
simplex method (when the optimum is not of bang-bang type). Once it
is done, say, at time $t_{i}$, they immediately use these estimated
volatilities and correlations (along with those from $t_{j}>t_{i}$)
to resample the whole Monte Carlo set from $t_{i}$ to $T$ (and idea
also used in the Multiple Step Forward scheme from \cite{Gobet11}).
Remark that this parameterization avoids the computation of conditional
expectations for each point and time step.

In \cite{Guyon11}, a second Monte Carlo scheme is proposed. It is
a Monte Carlo scheme for 2-BSDEs, very similar to the schemes \cite{Cheridito07}
and \cite{Fahim11}, but fine-tuned for the uncertain volatility problem
under log-normal processes. The conditional expectations are computed by parametric regression
(non-parametric regression in dimension 1). Then for each point and
each time step, a deterministic optimization procedure has to be performed
to find the optimal covariance matrix. However, unlike in their previous
algorithm, there is no resampling of the underlying diffusion using
the newly computed covariances, which means that ensuring a proper
simulation of the forward process becomes an issue.

Finally, we would like to draw attention to the work \cite{NguyenHuu13},
which is not devoted to the uncertain volatility problem (it deals
with the partial hedging of power futures with others futures with
larger delivery period), but the numerical scheme they propose can
deal with a control in the volatility. Their specific application
allows to retrieve the optimal control by a fixed point argument,
within a backward scheme. However, as in the previous algorithm, an
a priori control has to be used to simulate the forward process.

In the present paper, our numerical scheme provides an alternative
numerical sheme for dealing with the problem of pricing and hedging
an option under uncertain volatility. To illustrate this, we implement
it below on a simple example.

Consider two underlyings driven by the following dynamics:
\begin{eqnarray}
dS_{i}\left(t\right) & = & \sigma_{i}S_{i}\left(t\right)dW_{i}\left(t\right)\,\,,\,\, i=1,2\label{eq:uvmS}\\
\left\langle dW_{1}\left(t\right),dW_{2}\left(t\right)\right\rangle  & = & \rho\left(t,S_{1}\left(t\right),S_{2}\left(t\right)\right)dt\label{eq:uvmRho}
\end{eqnarray}
where $\sigma_{1},\sigma_{2}>0$, $W_{1}$ and $W_{2}$ are two correlated
brownian motions. We consider no drift and no interest rate for simplicity.
We instead focus our attention on the following crucial feature: we
consider the correlation $\rho$ to be \textit{uncertain}. We only
assume that $\rho$ always lies between two known bounds $-1\leq\rho_{\min}\leq\rho_{\max}\leq1$:
\begin{equation}
\rho_{\min}\leq\rho\leq\rho_{\max}\label{eq:rhoBounds}
\end{equation}
Notice that when $\rho_{\min}$ $=$ $-1$ or $\rho_{\max}$ $=$ $1$, the diffusion matrix of  $(S_1,S_2)$ can be degenerate.

We could also consider the two volatilities to be uncertain as well,
but for illustration purposes, we focus on the uncertainty of the
correlation parameter.

Finally, consider a payoff function $\Phi=\Phi\left(T,S_{1}\left(T\right),S_{2}\left(T\right)\right)$
at a time horizon $T>0$.

Now, the problem is to estimate the price of an option that delivers
the payoff $\Phi$ at time $T$, and, if possible, to build a hedging
strategy for this option.

Given that $\rho$ is uncertain, the model is incomplete, i.e. it
is not possible to construct a hedging strategy that replicates perfectly
the payoff $\Phi$ from any given amount of money. We thus look for
superhedging strategies instead.

Hence, consider the class $\mathbf{Q}$ of all probability measures
$\mathbb{Q}$ on the sets of paths $\left\{ S_{i}\left(t\right)\right\} _{0\leq t\leq T}^{i=1,2}$
such that equations \eqref{eq:uvmRho} and \eqref{eq:rhoBounds} hold
for a particular $\rho^{\mathbb{Q}}$. The superhedging price is thus
given by:
\begin{equation}
P_{0}^{+}:=\sup_{\mathbb{Q}\in\mathbf{Q}}\mathbb{E}^{\mathbb{Q}}\left[\Phi\left(T,S_{1}\left(T\right),S_{2}\left(T\right)\right)\right]\label{eq:superreplicationPrice}
\end{equation}
and the superhedging strategy is simply given by the usual delta-hedging
strategy with $\rho$ equal to the correlation that attains the supremum
in equation \eqref{eq:superreplicationPrice}. In particular it provides
an upper arbitrage bound to the price of the option. Symmetrically,
a lower bound is provided by the subreplication price:
\begin{equation}
P_{0}^{-}:=\inf_{\mathbb{Q}\in\mathbf{Q}}\mathbb{E}^{\mathbb{Q}}\left[\Phi\left(T,S_{1}\left(T\right),S_{2}\left(T\right)\right)\right]\label{eq:subreplicationPrice}
\end{equation}
The practical computation of $P_{0}^{+}$ and $P_{0}^{-}$ falls within
the scope of our numerical scheme.

We thus test our numerical scheme on this specific problem. We consider
the example of a call spread on the spread $S_{1}\left(T\right)-S_{2}\left(T\right)$,
i.e.:
\[
\Phi=\left(S_{1}\left(T\right)-S_{2}\left(T\right)-K_{1}\right)^{+}-\left(S_{1}\left(T\right)-S_{2}\left(T\right)-K_{2}\right)^{+}
\]
 where $K_{1}<K_{2}$. Unless stated otherwise, the parameters of
the model are fixed to the following values:

\noindent \begin{center}
\begin{tabular}{|c|c|c|c|c|c|c|c|c|}
\hline 
$S_{1}\left(0\right)$ & $S_{2}\left(0\right)$ & $\sigma_{1}$ & $\sigma_{2}$ & $\rho_{\min}$ & $\rho_{\max}$ & $K_{1}$ & $K_{2}$ & $T$\tabularnewline
\hline 
\hline 
$50$ & $50$ & $0.4$ & $0.3$ & $-0.8$ & $0.8$ & $-5$ & $5$ & $0.25$\tabularnewline
\hline 
\end{tabular}
\par\end{center}

For the numerical parameters, we use $n=26$ time-discretization steps,
and a sample of $M=10^{6}$ Monte Carlo simulations. For the regressions,
we use a basis function of sigmoid transforms of polynomial of degree
two:
\begin{eqnarray*}
\phi\left(t,s_{1},s_{2},\rho\right) & := & \left(K_{2}-K_{1}\right)\times\mathcal{S}\left(\beta_{0}+\beta_{1}s_{1}+\beta_{2}s_{2}+\beta_{3}\rho+\beta_{4}\rho s_{1}+\beta_{5}\rho s_{2}\right)\\
\mathcal{S}\left(u\right) & := & \frac{1}{1+e^{-u}}
\end{eqnarray*}
We chose the sigmoid function for its resemblance to the call spread
payoff, and the terms inside the sigmoid according to their statistical
significance. With this choice of basis, the optimal control will
be bang-bang:
\begin{align*}
\rho^{*} & =\rho^{*}\left(t,s_{1},s_{2}\right):=\arg\max_{\rho}\phi\left(t,s_{1},s_{2},\rho\right)=\rho_{\max}\mathbf{1}\left\{ \beta_{3}+\beta_{4}s_{1}+\beta_{5}s_{2}\geq0\right\} +\rho_{\min}\mathbf{1}\left\{ \beta_{3}+\beta_{4}s_{1}+\beta_{5}s_{2}<0\right\} 
\end{align*}

Figure \ref{fig:UCM} below reports our results.

Figure \ref{fig:CallSpread} reports the superhedging and subhedging
prices of the option, for different values of the moneyness ($S_{2}\left(0\right)=50$
is kept fixed and different values of $S_{1}\left(0\right)=50+{\rm Moneyness}$
are tested). One can clearly see the range of non-arbitrage prices
that they define. For comparison, the prices obtained when $\rho$
is constant are reported on the same graph for different values ($\rho_{\min}$,
$0$ and $\rho_{\max}$). One can see that, even though these prices
belong to the non-arbitrage range, they do not cover the whole range,
especially close to the money. This clearly indicates that, as already
observed in \cite{Marabel11} for instance, the practice of pricing
under the hypothesis of constant parameters, and then testing different
values for the parameters can be a very deceptive assessment of risk
(as ``uncertain'' is not the same as ``uncertain but constant'').

Figure \ref{fig:CorrelRange} illustrates the impact of the size of
the correlation range $\left[\rho_{\min},\rho_{\max}\right]$. Naturally,
the wider the correlation range, the wider the price range. On average,
an increase of 0.1 of the correlation range increases the price range
by $0.135$.

\begin{figure}[H]
\hspace{-5.1mm}\subfloat[\label{fig:CallSpread}Price of Call Spread]{\includegraphics[width=0.41\paperwidth]{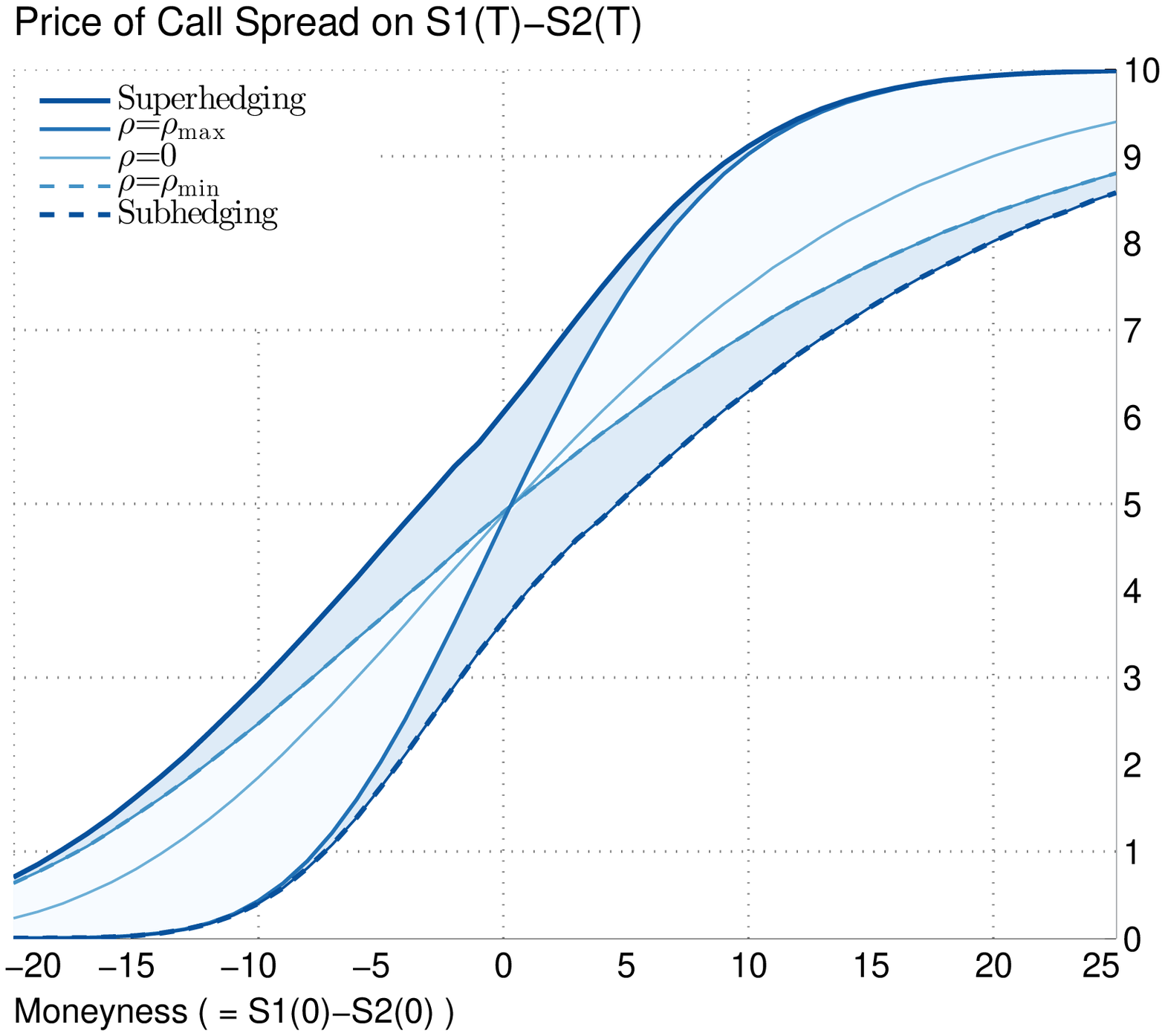}

}\hspace{-3mm}\subfloat[\label{fig:CorrelRange}Influence of the correlation range]{\includegraphics[width=0.41\paperwidth]{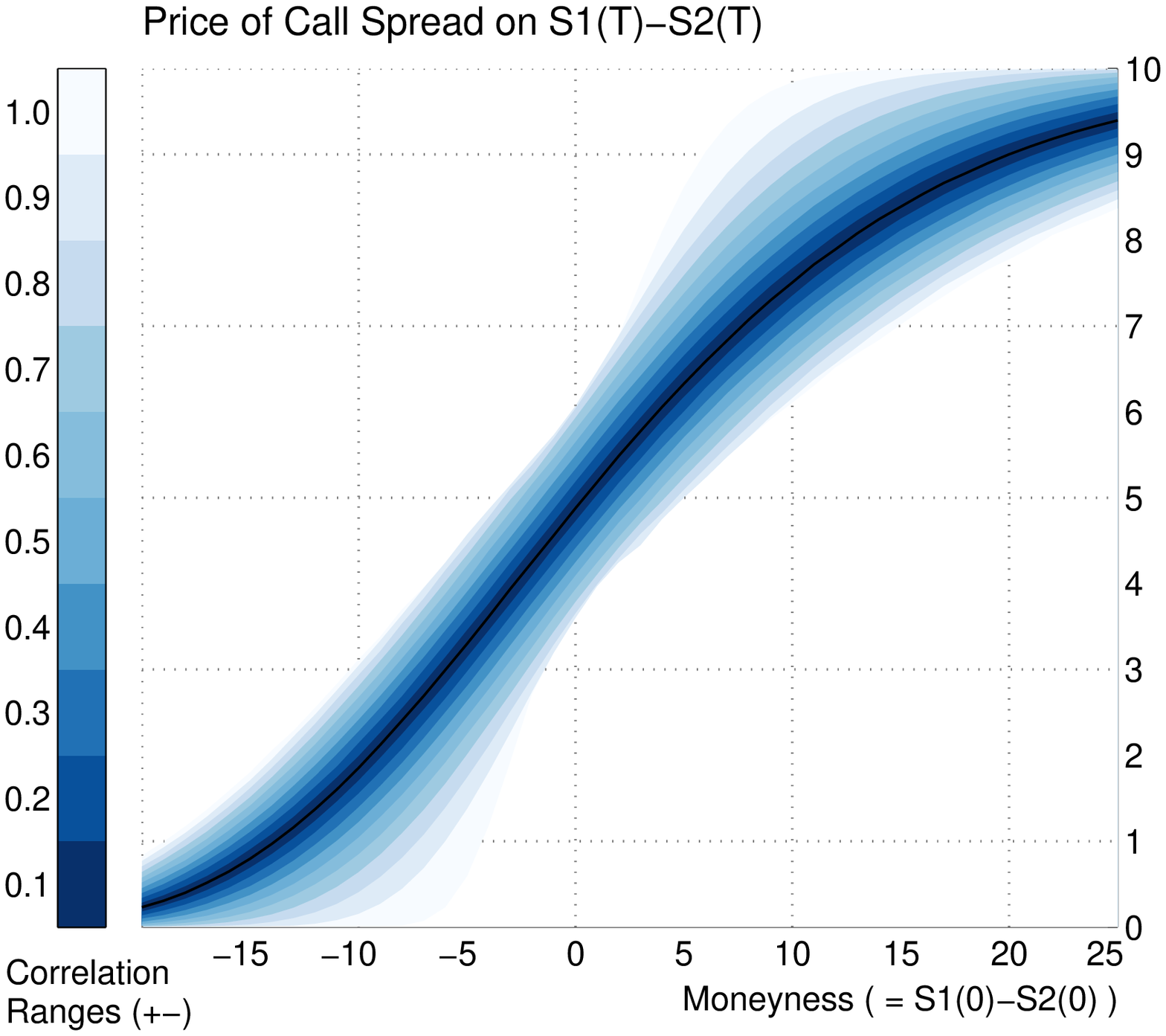}

}\caption{\label{fig:UCM}Prices under uncertain correlation}
\end{figure}

\subsection{Comparisons with \cite{Guyon11}}

Finally, we test our algorithm on several payoffs proposed in \cite{Guyon11},
and compare the behaviour of our method to their results. To be more
specific, we will not focus our comparison of algorithms to their
parametric approach%
\footnote{For comprehensiveness, here are the main pros and cons of the parametric
approach: it is very accurate (especially when the optimal control
belongs to the chosen parametric class) but requires $\mathcal{O}\left(N^{2}\times M\right)$
operations, as at each time step $t_{i}$ the simulations of the forward
process are recomputed between $t_{i}$ and $t_{N}$ using the newly
estimated optimal controls. %
}, but to their second-order BSDE approach, as both algorithms are
similar in nature (forward-backward schemes involving simulations
and regressions). 

Actually, we are going to implement and compare two different versions
of our scheme. The first one correspond to the empirical version of
the scheme studied in Section \ref{sec:Scheme}:\foreignlanguage{french}{{\small 
\begin{align}
\hat{Y}_{N} & =g\left(X_{N}\right)\nonumber \\
\hat{\mathcal{Y}}_{i} & =\hat{\mathbb{E}}_{i}\left[\hat{Y}_{i+1}+f\left({\color{black}X_{i}},I_{i}\right)\Delta_{i}\right]\nonumber \\
\hat{Y}_{i} & =\esssup_{a\in A}\mathbb{E}_{i,a}\left[\hat{\mathcal{Y}}_{i}\right]\label{eq:TvR}
\end{align}
}}{\small \par}
where $\hat{\mathbb{E}}_{i}$ corresponds to an empirical least-squares
regression which approximates the true conditional expectation $\mathbb{E}_{i}$.
In the simpler context of American option pricing, this scheme would
correspond to the Tsitsiklis-van Roy algorithm (\cite{Tsitsiklis01}).

The second one makes use of the estimated optimal policies computed
by the first algorithm, which are then directly plugged into the stochastic
control problem under consideration:\foreignlanguage{french}{{\small 
\begin{align}
\hat{\alpha}_{i} & =\arg\esssup_{a\in A}\mathbb{E}_{i,a}\left[\hat{\mathcal{Y}}_{i}\right]\nonumber \\
\hat{X}_{i+1} & =b(\hat{X}_{i},\hat{\alpha}_{i})\Delta_{i}+\sigma(\hat{X}_{i},\hat{\alpha}_{i})\Delta W_{i}\nonumber \\
\hat{v}\left(t_{0},x_{0}\right) & =\frac{1}{M}\sum_{m=1}^{M}\left[\sum_{i=1}^{N}f(\hat{X}_{i+1},\hat{\alpha}_{i})\Delta_{i}+g(\hat{X}_{N})\right]\label{eq:LS}
\end{align}
}}{\small \par}
In the context of American option pricing, this scheme would correspond
to the Longstaff-Schwarz algorithm (\cite{Longstaff01}).

We compute both prices as they are somehow complementary. Indeed,
as noticed in \cite{Bouchard11} and detailed in \cite{Aid12-2},
the first algorithm tend to be upward biased (up to the Monte Carlo
error and the regression bias) compared with the discretized price,
while the second one tend to be downward biased (up to the Monte Carlo
error). Therefore, computing both prices provides a kind of empirical
confidence interval, with the length of the interval being due to
the choice of regression basis, thus providing an empirical assessment
of the quality of the chosen regression basis.

\paragraph*{Call Spread}

Let $S$ be a geometric brownian motion with $S\left(0\right)=100$
and with uncertain volatility $\sigma$ taking values in $\left[0.1,0.2\right]$.

Consider a call spread option, with payoff $\left(S\left(T\right)-K_{1}\right)^{+}-\left(S\left(T\right)-K_{2}\right)^{+}$
and time horizon $T=1$, with $K1=90$ and $K2=110$. The true price
of the option (as estimated by PDE methods in \cite{Guyon11}) is
$\mathcal{C}_{PDE}=11.20$, and the Black-Scholes price with constant
volatility $\sigma_{\mathrm{mid}}=0.15$ is $\mathcal{C}_{BS}=9.52$.
We implement our scheme using the following set of basis functions:
\[
\phi\left(t,s,\sigma\right)=\left(K_{2}-K_{1}\right)\times\mathcal{S}\left(\beta_{0}+\beta_{1}s+\beta_{2}s^{2}+\beta_{3}\sigma+\beta_{4}\sigma s+\beta_{5}\sigma s^{2}\right)
\]
where, as in Subsection \ref{sub:UCM}, $\mathcal{S}$ denotes the
sigmoid function. 

Figure \ref{fig:UVM-CS-1D} describes the estimates obtained with
both algorithms \eqref{eq:TvR} and \eqref{eq:LS}, for various values
of the number $M$ of Monte Carlo simulations, and of the length of
the constant discretization time step. For comparison, the red line
corresponds to the price $\mathcal{C}_{PDE}$ of the option.

\begin{figure}[H]
\hspace{-4mm}\subfloat[\label{fig:CallSpread1}First Algorithm]{\includegraphics[width=0.4\paperwidth]{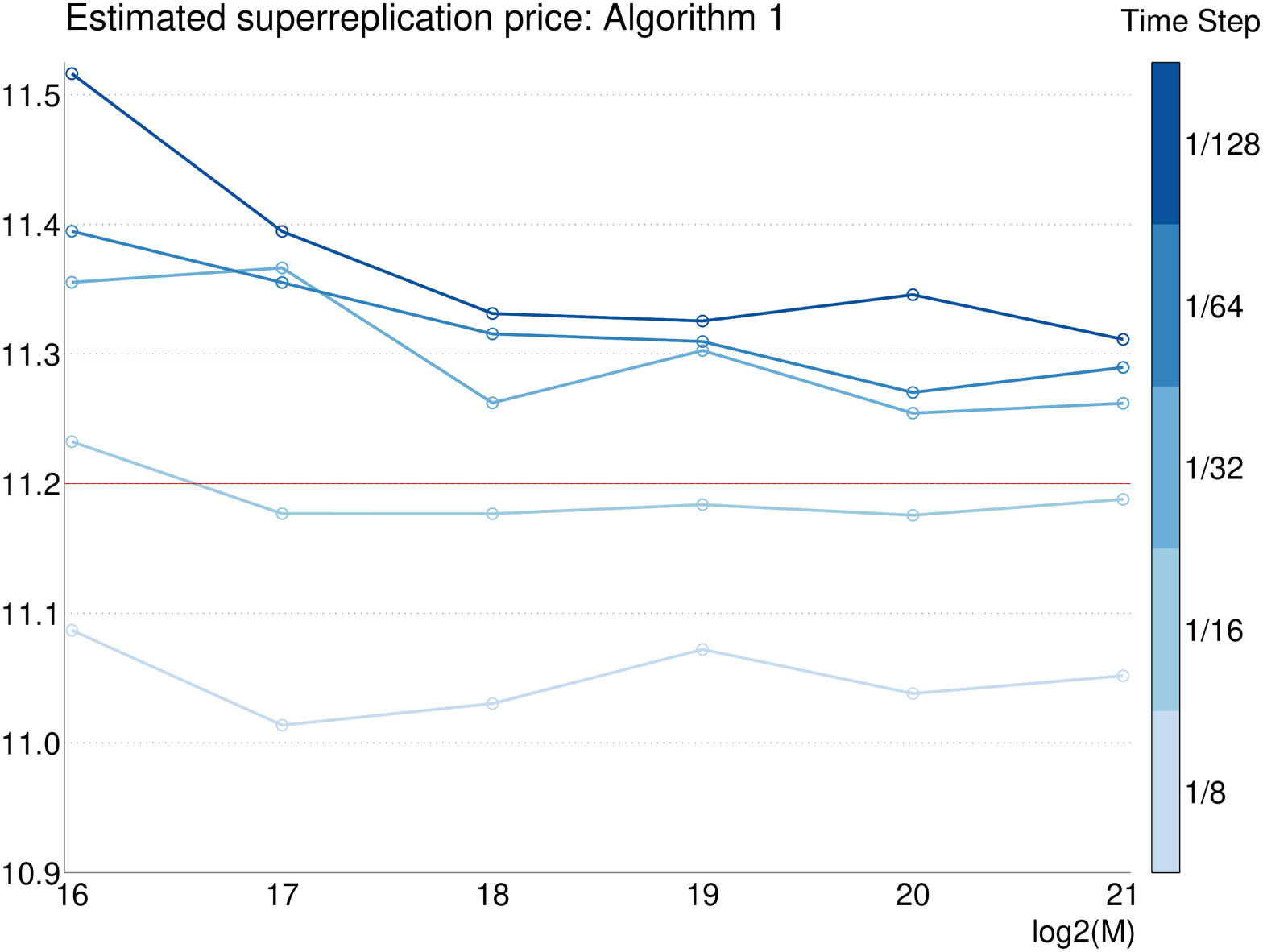}

}\subfloat[\label{fig:CallSpread2}Second Algorithm]{\includegraphics[width=0.4\paperwidth]{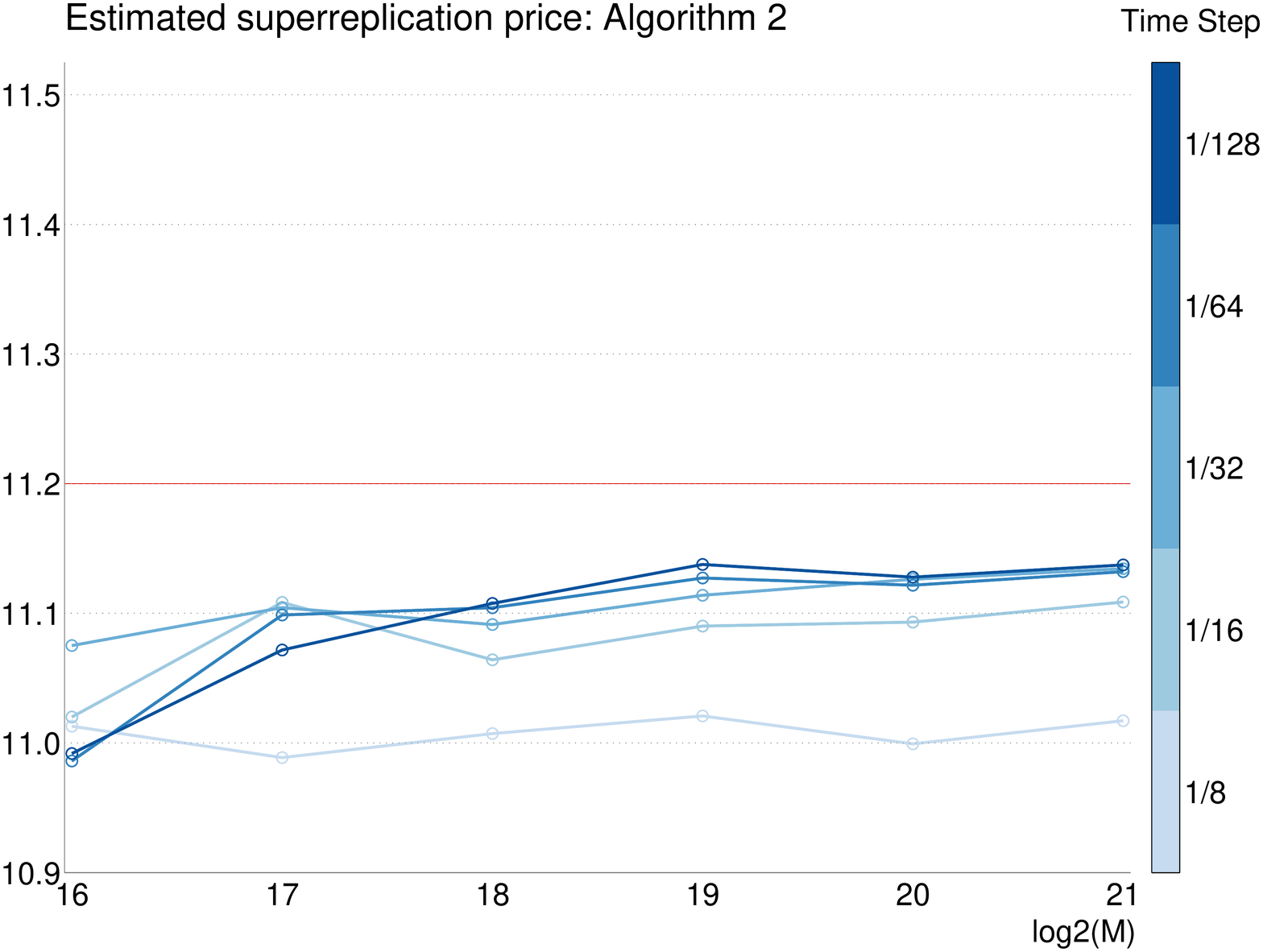}

}\caption{\label{fig:UVM-CS-1D}Price of Call Spread}
\end{figure}

The following general observations can be made.

First, for a small enough time step, the prices computed using the
first algorithm \eqref{eq:TvR} (Figure \ref{fig:CallSpread1}) tend
as expected to be above the true price, while the second algorithm
\eqref{eq:LS} (Figure \ref{fig:CallSpread2}) tend to be below it.

Our best estimate here ($M=2^{21}$ , $\Delta_{t}=1/128$) is $11.31$
with the first algorithm ($+1\%$ compared with the true price) and $11.14$
with the second one ($-0.6\%$). The true price lies indeed between
those two bounds, and their average ($11.22$) is even closer to the
true price than any of the two estimates ($+0.2\%$).

The prices computed with the first algorithm always lie above the
prices computed with the second algorithm. As these prices are expected
to surround the true discretized price (as would be computed by the
scheme \eqref{eq:TvR} with $\mathbb{E}_{i}$ instead of $\hat{\mathbb{E}}_{i}$),
the fact that for large discretization steps ($\Delta_{t}=1/8$ or
$1/16$) the prices computed using the first algorithm are below the
true price $11.20$ simply means that, for such discretization steps,
the true discretized price lies below the true price (in other words
the time discretization generates here a negative bias).

Finally, increasing the number of Monte Carlo simulations tends as
expected to improve the price estimates. However, the Monte Carlo
error can be negligible compared with the discretization error for small
time steps, which is why both a large number of Monte Carlo simulations
and a small discretization time step are required to obtain accurate
estimates.

In \cite{Guyon11}, the algorithm based on second-order BSDEs produces
the estimates $11.04$ for $\left(1/\Delta_{t},\log_{2}\left(M\right)\right)=\left(8,16\right)$
and $11.11$ for $\left(1/\Delta_{t},\log_{2}\left(M\right)\right)=\left(8,17\right)$.
This is close to our estimates for similar parameters. However, a
more accurate comparison would require to test their algorithm with
smaller time steps and more Monte Carlo simulations (they only consider
parameters $\left(1/\Delta_{t},\log_{2}\left(M\right)\right)$ within
$\left[2,8\right]\times\left[12,17\right]$, whereas we consider here
the range $\left[8,128\right]\times\left[16,21\right]$, as it provides
much greater accuracy of the estimates, providing a sound basis for
the analysis of the results).

\paragraph*{Digital option:}

Consider a digital option, with payoff $100\times\mathbf{1}\left\{ S\left(T\right)\geq K\right\} $
and $T=1$ on the samee asset, with $K=100$. The true (PDE) price
is $\mathcal{C}_{PDE}=63.33$, and the Black-Scholes price with mid-volatility
is $\mathcal{C}_{BS}=46.54$. We use the following set of basis functions:
\[
\phi\left(t,s,\sigma\right)=100\times\mathcal{S}\left(\beta_{0}+\beta_{1}s+\beta_{2}s^{2}+\beta_{3}\sigma+\beta_{4}\sigma s+\beta_{5}\sigma s^{2}\right)
\]

\begin{figure}[H]
\hspace{-4mm}\subfloat[\label{fig:Digital1}First Algorithm]{\includegraphics[width=0.4\paperwidth]{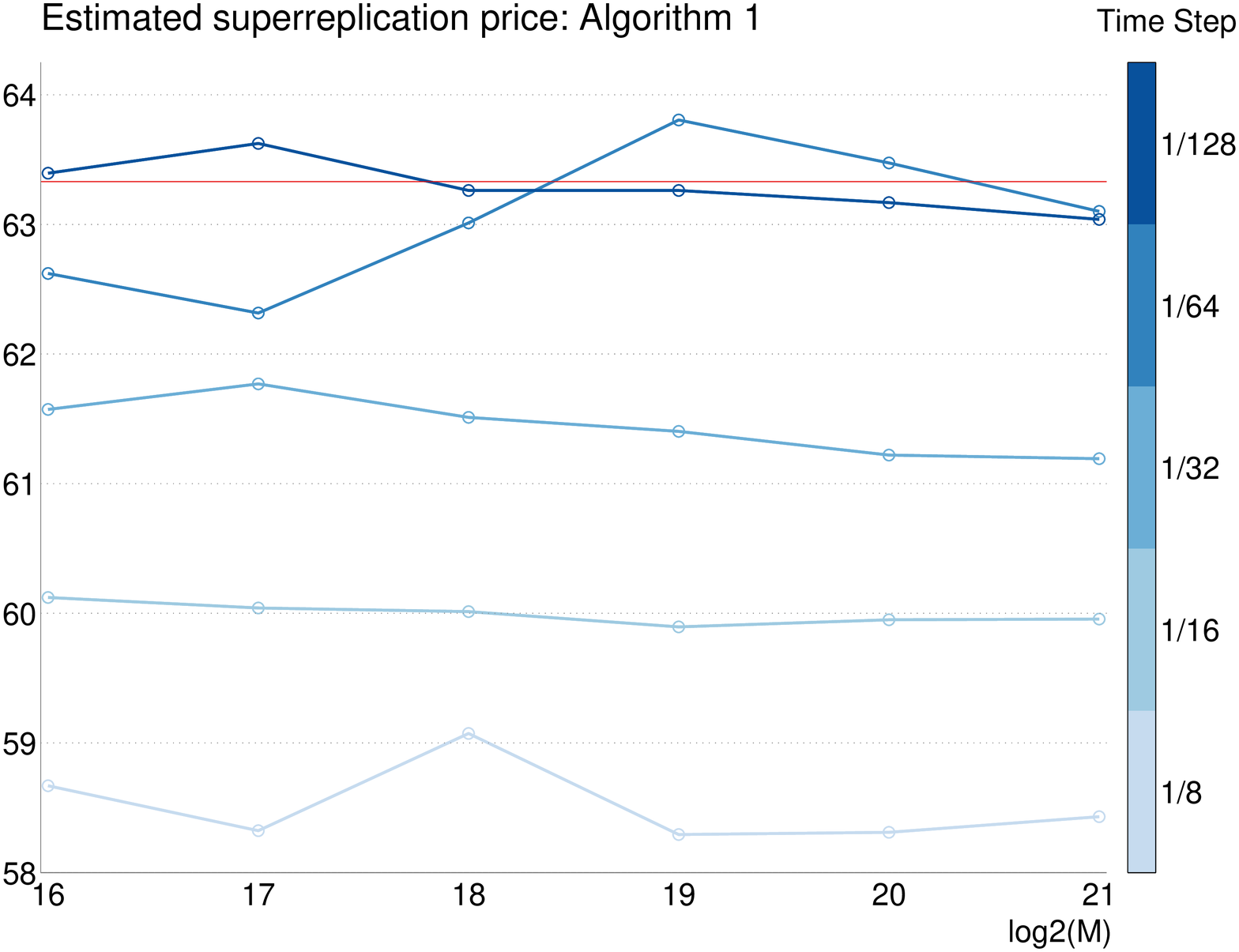}

}\subfloat[\label{fig:Digital2}Second Algorithm]{\includegraphics[width=0.4\paperwidth]{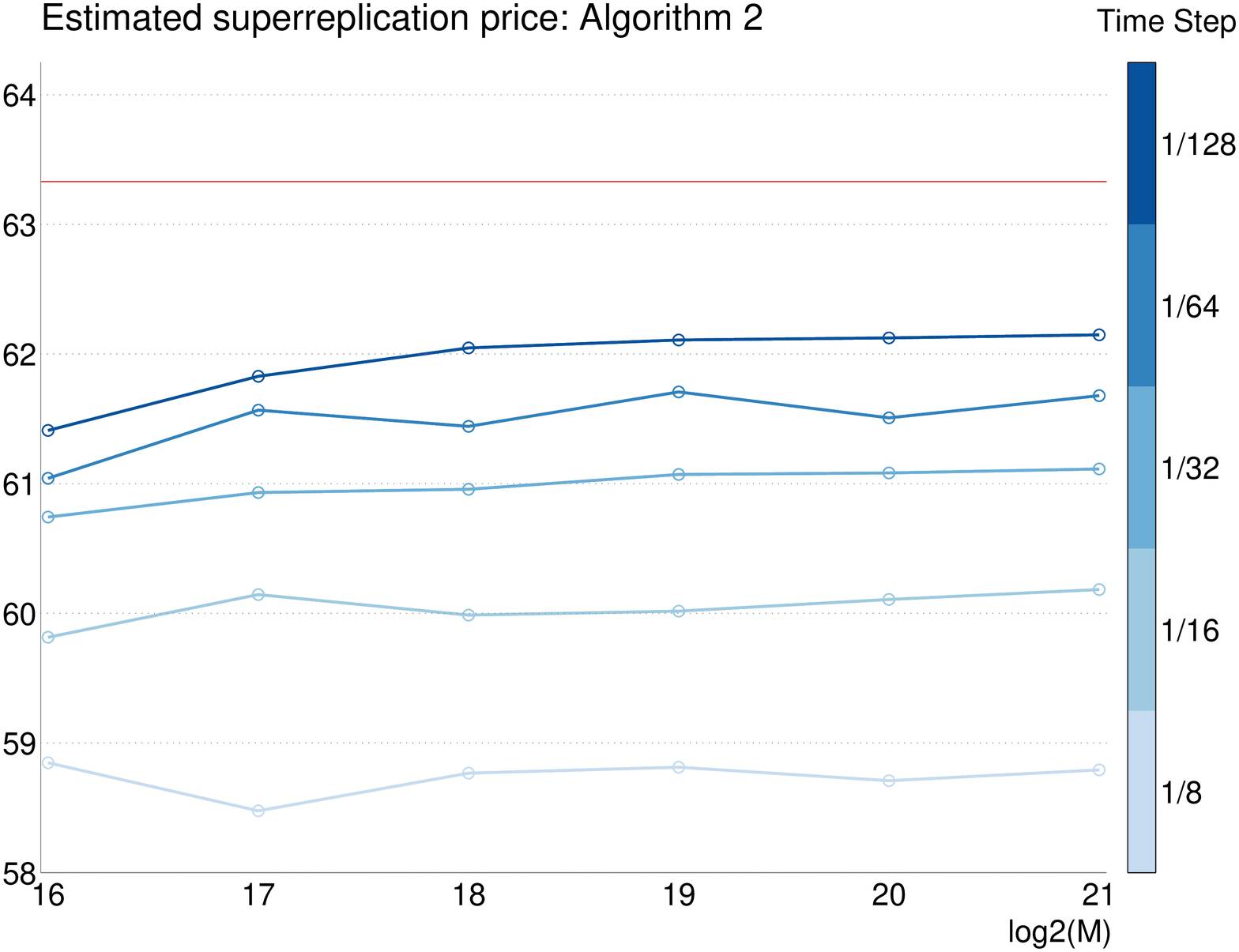}

}\caption{\label{fig:UVM-DIG-1D}Price of Digital Option}
\end{figure}

As can be seen on Figure \ref{fig:UVM-DIG-1D}, the time discretization
error is much more pronounced with this discontinuous payoff, compared
with the previous call spread example. We manage to reach estimates
of $63.04$ ($-0.5\%$) and $62.15$ ($-1.9\%$), even though smaller
time steps would be required for better accuracy.

For small parameters ($\left(1/\Delta_{t},\log_{2}\left(M\right)\right)=\left(8,16\right)$),
the accuracy is better in \cite{Guyon11} ($60.53$), even though
shortening the time step tends to degrade the results in their case.

\paragraph*{Outperformer Option:}

Consider now two geometric Brownian motions $S_{1}$ and $S_{2}$,
starting from $100$ at time $0$, with uncertain volatilities $\sigma_{1}$
and $\sigma_{2}$ taking values in $\left[0.1,0.2\right]$. For the
moment, suppose that the correlation $\rho$ between the two underlying
Brownian motions is zero.

Consider an outperformer option, with payoff $\left(S_{1}\left(T\right)-S_{2}\left(T\right)\right)^{+}$
and time horizon $T=1$. The true price is $\mathcal{C}=11.25$. We
use the following set of basis functions:
\begin{align*}
\phi\left(t,s_{1},s_{2},\sigma_{1},\sigma_{2}\right)= & 100\times\left(\beta_{0}+\beta_{1}s_{1}+\beta_{2}s_{1}^{2}+\beta_{3}s_{2}+\beta_{4}s_{2}^{2}+\beta_{5}s_{1}s_{2}+\beta_{6}\sigma_{1}+\beta_{7}\sigma_{1}s_{1}+\beta_{8}\sigma_{1}s_{1}^{2}\right.\\
 & \left.+\beta_{9}\sigma_{1}s_{2}+\beta_{10}\sigma_{1}s_{2}^{2}+\beta_{11}\sigma_{2}+\beta_{12}\sigma_{2}s_{1}+\beta_{13}\sigma_{2}s_{1}^{2}++\beta_{14}\sigma_{2}s_{2}+\beta_{15}\sigma_{2}s_{2}^{2}\right)
\end{align*}

\begin{figure}[H]
\hspace{-4mm}\subfloat[\label{fig:Outperformer1}First Algorithm]{\includegraphics[width=0.4\paperwidth]{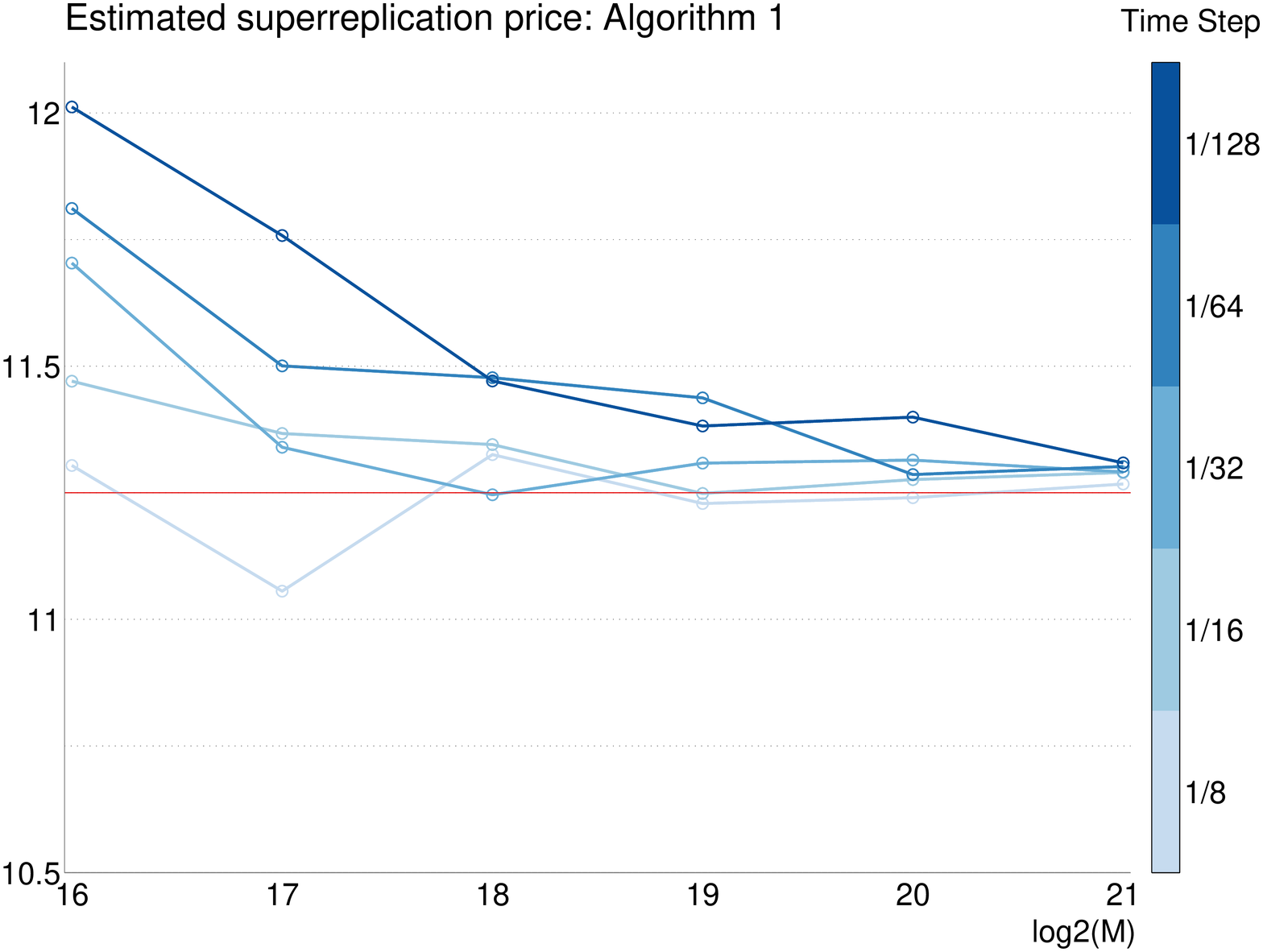}

}\subfloat[\label{fig:Outperformer2}Second Algorithm]{\includegraphics[width=0.4\paperwidth]{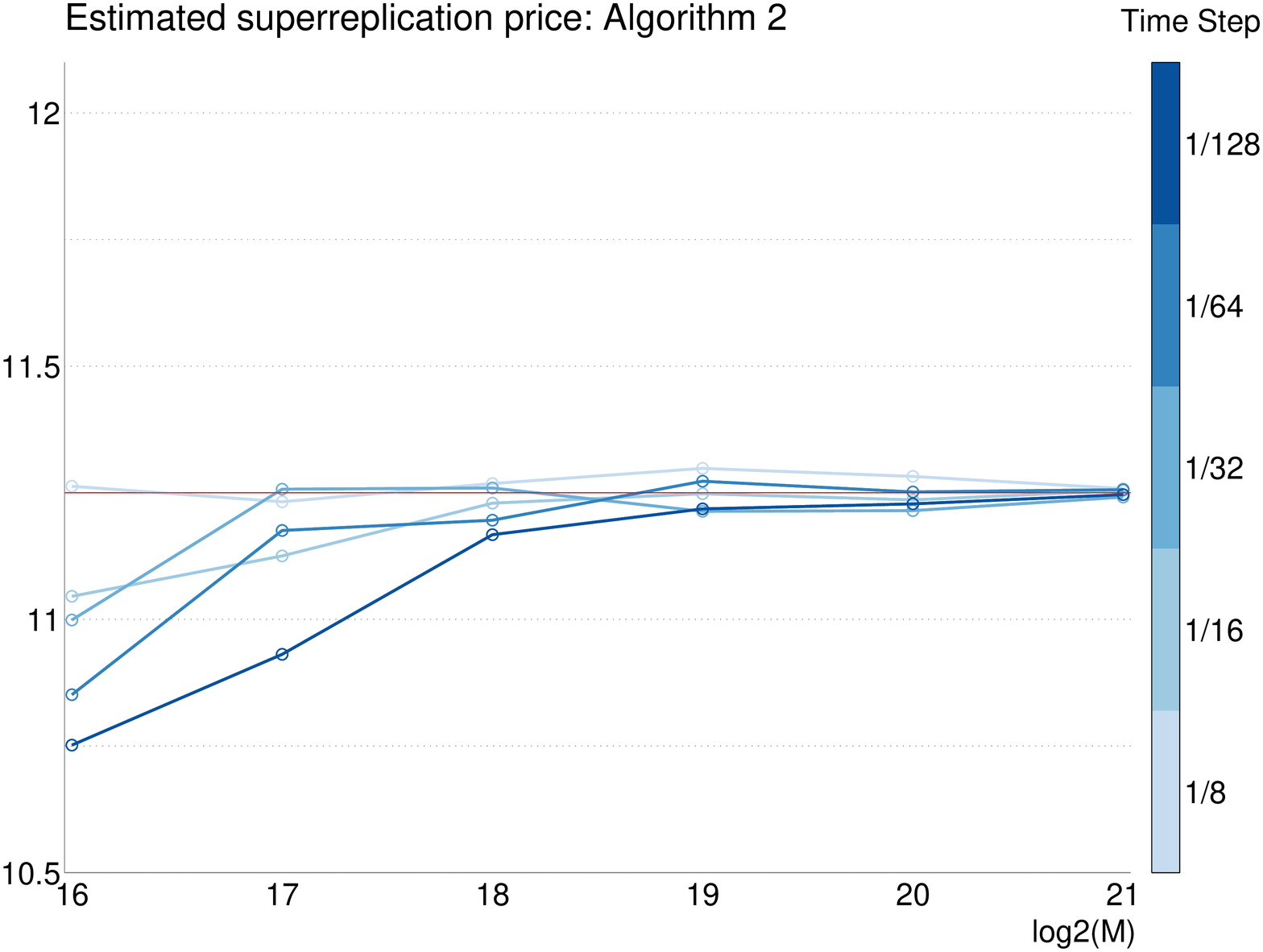}

}\caption{\label{fig:UVM-OPF-2D}Price of Outperformer Option ($\rho=0$)}
\end{figure}

Here, in contrast with the previous examples, the bulk of the error
comes from the Monte Carlo simulations, and not from the time discretization.
Moreover, both algorithms provide very accurate estimates. Indeed,
this convex option is easy to price under the uncertain volatility
model, as it is given by the price obtained with the maximum volatilities.
With our choice of regression basis, the algorithm correctly detects
that the maximum volatilities are to be used, leading to these very
accurate estimates $11.31$ ($+0.5\%$) and $11.25$ ($-0\%$). For
the same reason, the estimates from \cite{Guyon11} are accurate too.

Figure \ref{fig:UVM-OPF-2D-RHO} below depicts the estimated price
of the same option but now with a negative constant correlation $\rho=-0.5$.
Its true price is $\mathcal{C}=13.75$.

\begin{figure}[H]
\hspace{-4mm}\subfloat[\label{fig:OutperformerCorrel1}First Algorithm]{\includegraphics[width=0.4\paperwidth]{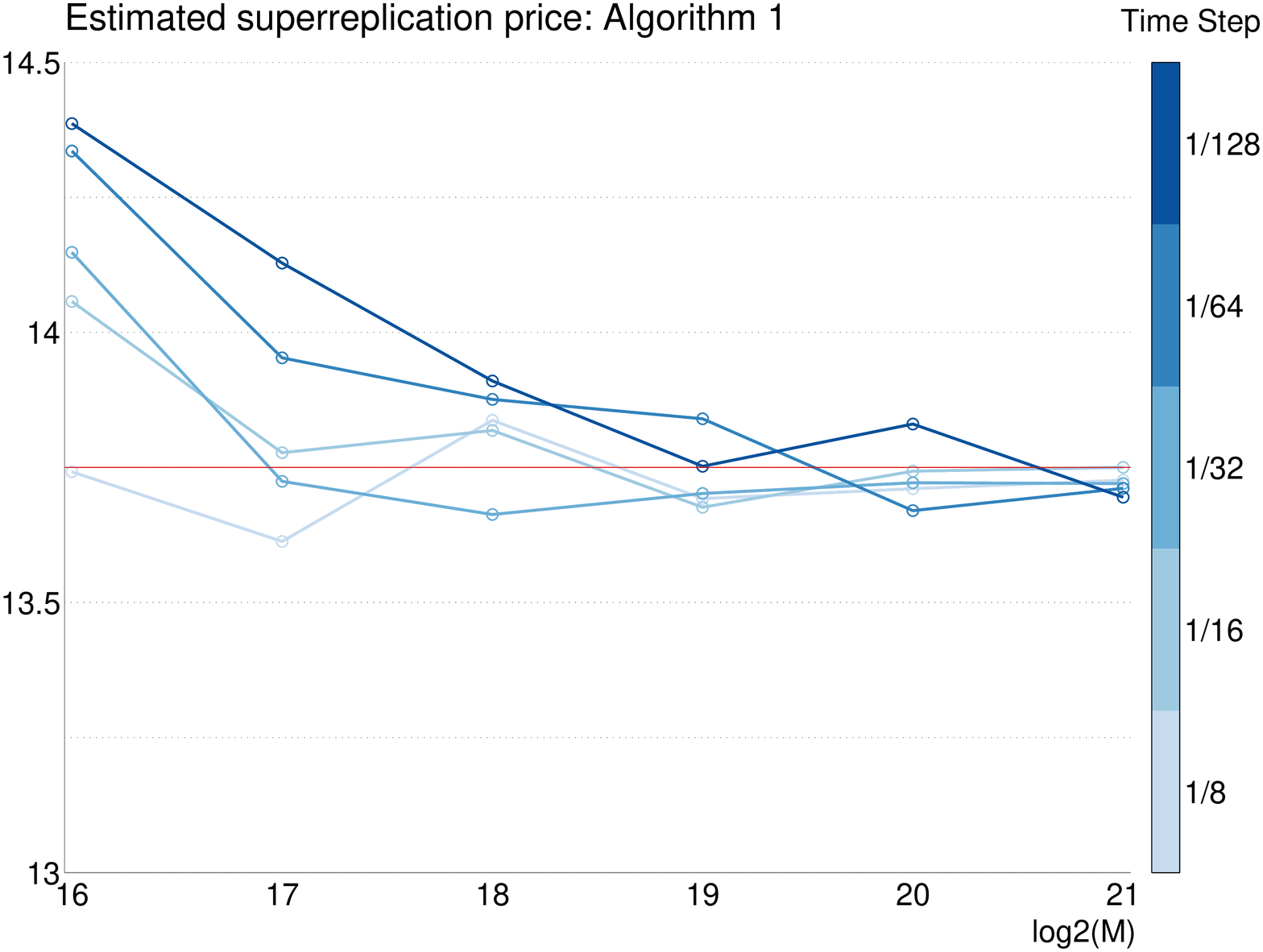}

}\subfloat[\label{fig:OutperformerCorrel2}Second Algorithm]{\includegraphics[width=0.4\paperwidth]{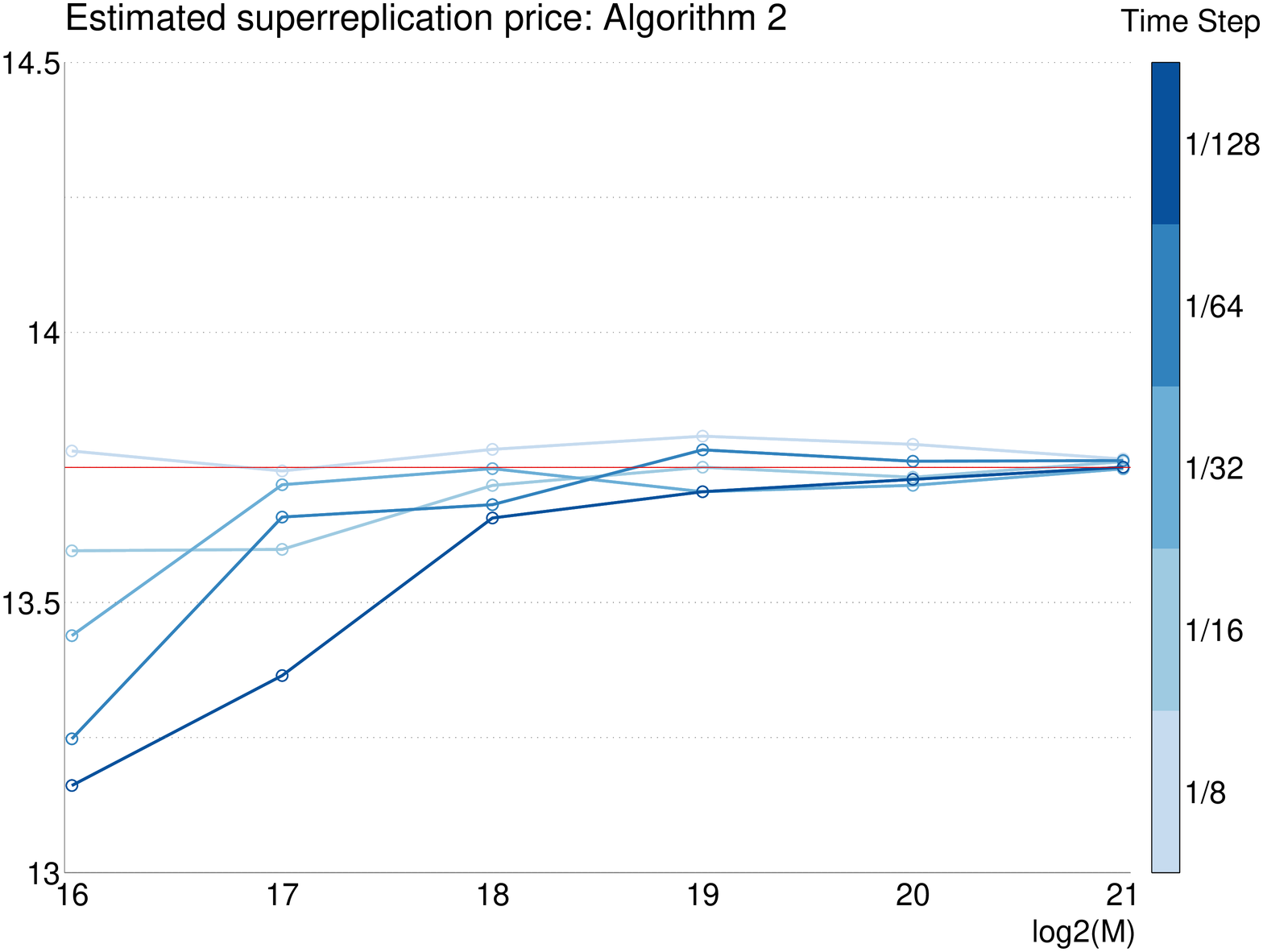}

}\caption{\label{fig:UVM-OPF-2D-RHO}Price of Outperformer Option ($\rho=-0.5$)}
\end{figure}

The same behaviour can be observed. Both algorithms are accurate here
($13.69$ ($-0.4\%$) and $13.75$ ($-0\%$)).

As the estimate from the first algorithm happens to lie below the
true price, we take advantage of this result to recall from the introduction
of this subsection that the bias of the first algorithm bears one
more source of error (the regression bias) than the bias of the second
algorithm. This means that in general the sign of the bias wrt. the
true discretized price is more reliable with the second algorithm.
With this observation in mind, we propose, from the two estimates
$P_{1}$ and $P_{2}$ computed by the two algorithms, to consider
the following general estimate $P$:
\[
P:=\max\left(P_{2},\frac{P_{1}+P_{2}}{2}\right)
\]

Indeed, if $P_{1}\geq P_{2}$ (which is the expected behaviour), then
$P:=\frac{P_{1}+P_{2}}{2}$ may provide a better estimate than both
$P_{1}$ and $P_{2}$ separately (as is the case for the call spread
example from Figure \ref{fig:UVM-CS-1D}). However, when $P_{1}<P_{2}$
(which is not expected), then, recalling that $P_{2}$ may be more
accurate than $P_{1}$, it is better to consider $P:=P_{2}$ (as is
the case here of this outperformer option with $\rho=-0.5$). In the
following, we will call $P$ the mid-estimate (with a slight abuse
of terminology, as $P$ is usually but not always the average between
$P_{1}$ and $P_{2}$).

\paragraph*{Outperformer spread option:}

We now analyze a more complex payoff. Consider an outperformer spread
option, with payoff $\left(S_{2}\left(T\right)-K_{1}S_{1}\left(T\right)\right)^{+}-\left(S_{2}\left(T\right)-K_{2}S_{1}\left(T\right)\right)^{+}$,
time horizon $T=1$ and constant correlation $\rho=-0.5$. The true
(PDE) price is $\mathcal{C}_{PDE}=11.41$, and the Black-Scholes price
with mid-volatility is $\mathcal{C}_{BS}=9.04$. We use the following
set of basis functions:
\begin{align*}
\phi\left(t,s_{1},s_{2},\sigma_{1},\sigma_{2}\right)=s_{1}\times\left(K_{2}-K_{1}\right)\times & \mathcal{S}\left(\beta_{0}+\beta_{1}\frac{s_{2}}{s_{1}}+\beta_{2}\left(\frac{s_{2}}{s_{1}}\right)^{2}+\beta_{3}\sigma_{1}+\beta_{4}\sigma_{1}\frac{s_{2}}{s_{1}}+\beta_{5}\sigma_{1}\left(\frac{s_{2}}{s_{1}}\right)^{2}\right.\\
 & \left.+\beta_{6}\sigma_{2}+\beta_{7}\sigma_{2}\frac{s_{2}}{s_{1}}+\beta_{8}\sigma_{2}\left(\frac{s_{2}}{s_{1}}\right)^{2}\right)
\end{align*}

\begin{figure}[H]
\hspace{-4mm}\subfloat[\label{fig:OutperformerSpread1}First Algorithm]{\includegraphics[width=0.4\paperwidth]{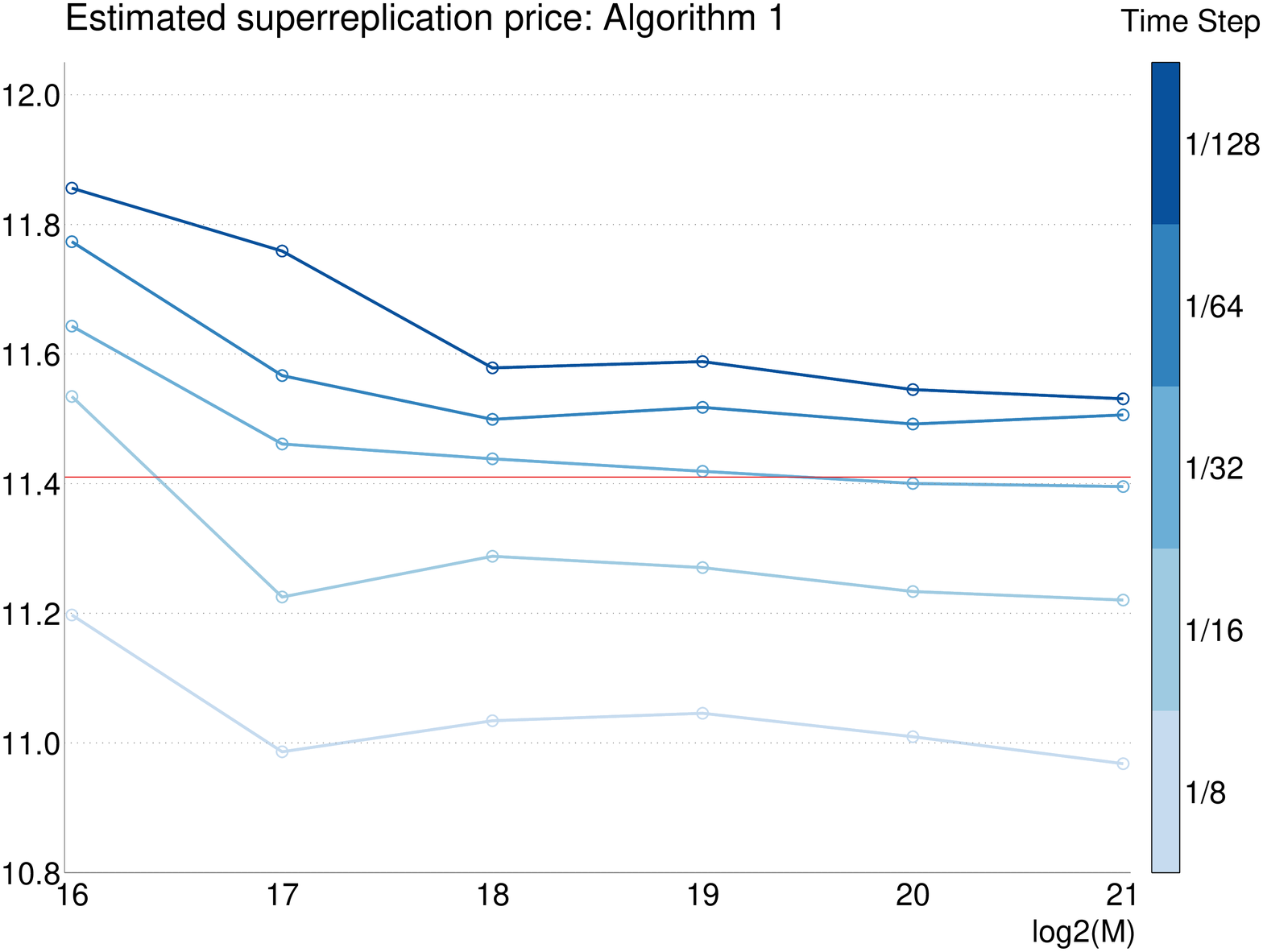}

}\subfloat[\label{fig:OutperformerSpread2}Second Algorithm]{\includegraphics[width=0.4\paperwidth]{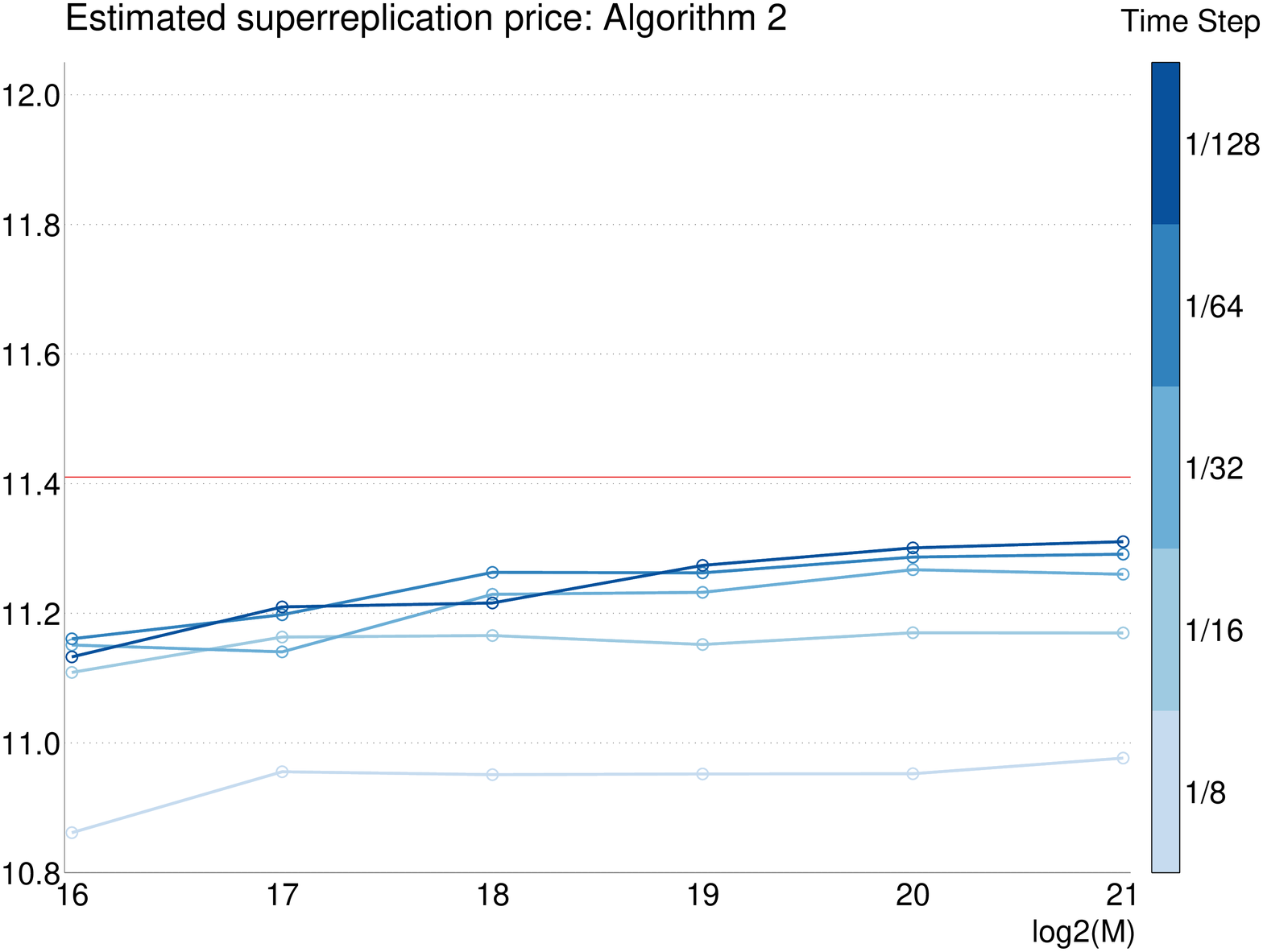}

}\caption{\label{fig:UVM-OPF-SPREAD}Price of Outperformer Spread Option ($\rho=-0.5$)}
\end{figure}

In this example, one can see that the time discretization produces
a large downward bias (as in the call spread and digital option examples),
but both algorithms behave as expected (the first algorithm produces
high estimates, the second produces low estimates, and both are close
to the true price ($11.53$ ($+1\%$) and $11.31$ ($-0.9\%$))).
Moreover, the mid-estimate $11.42$ is very accurate.

In \cite{Guyon11} is reported the estimate $10.83$ for $\left(1/\Delta_{t},\log_{2}\left(M\right)\right)=\left(8,20\right)$,
which is slightly worse than our estimates for the same choice of
$M$ and $\Delta_{t}$ ($11.01$ and $10.95$), but the difference
can be due to the different choice of basis. However, the three estimates
are well below the true price, and our numerical results indicate
that the reason is that $\Delta_{t}=1/8$ is too large a time step.

This suggests that the estimates from \cite{Guyon11} could be improved
by considering smaller time steps. However, as acknowledged in their
paper, the second-order BSDE method does not work properly when $\Delta_{t}$
is too small. Indeed, their BSDE scheme makes use of the first order
component $Z$ and the second order component $\Gamma$. The problem
here is that, for fixed $M$, the variance of the estimators of $Z$
and $\Gamma$ tends to infinity when $\Delta_{t}$ tends to zero.
However, as detailed in \cite{Alanko13}, this problem can be completely
solved by amending the estimators using appropriate variance reduction
terms. Therefore, in our opinion, a fair comparison of the jump-constrained
BSDE approach and the second-order BSDE approach would require the
use of the variance reduction method from \cite{Alanko13} to allow
for smaller time steps for the second-order BSDE approach.

As a final numerical example, we consider again the same outperformer
spread option, with the exception that the correlation $\rho$ is
now considered uncertain, within $\left[-0.5,0.5\right]$. The true
(PDE) price is $\mathcal{C}_{\mathrm{PDE}}=12.83$, and the Black-Scholes
price with mid-volatility is $\mathcal{C}_{BS}=9.24$. We use the
following basis functions:
\begin{align*}
\phi\left(t,s_{1},s_{2},\sigma_{1},\sigma_{2},\rho\right)=s_{1}\times\left(K_{2}-K_{1}\right)\times & \mathcal{S}\left(\beta_{0}+\beta_{1}\frac{s_{2}}{s_{1}}+\beta_{2}\left(\frac{s_{2}}{s_{1}}\right)^{2}+\beta_{3}\sigma_{1}+\beta_{4}\sigma_{1}\frac{s_{2}}{s_{1}}+\beta_{5}\sigma_{1}\left(\frac{s_{2}}{s_{1}}\right)^{2}\right.\\
 & \left.+\beta_{6}\sigma_{2}+\beta_{7}\sigma_{2}\frac{s_{2}}{s_{1}}+\beta_{8}\sigma_{2}\left(\frac{s_{2}}{s_{1}}\right)^{2}+\beta_{6}\rho+\beta_{7}\rho\frac{s_{2}}{s_{1}}+\beta_{8}\rho\left(\frac{s_{2}}{s_{1}}\right)^{2}\right)
\end{align*}

Remark that at each time step we perform here a five-dimensional regression.

\begin{figure}[H]
\hspace{-4mm}\subfloat[\label{fig:OutperformerSpreadCorrel1}First Algorithm]{\includegraphics[width=0.4\paperwidth]{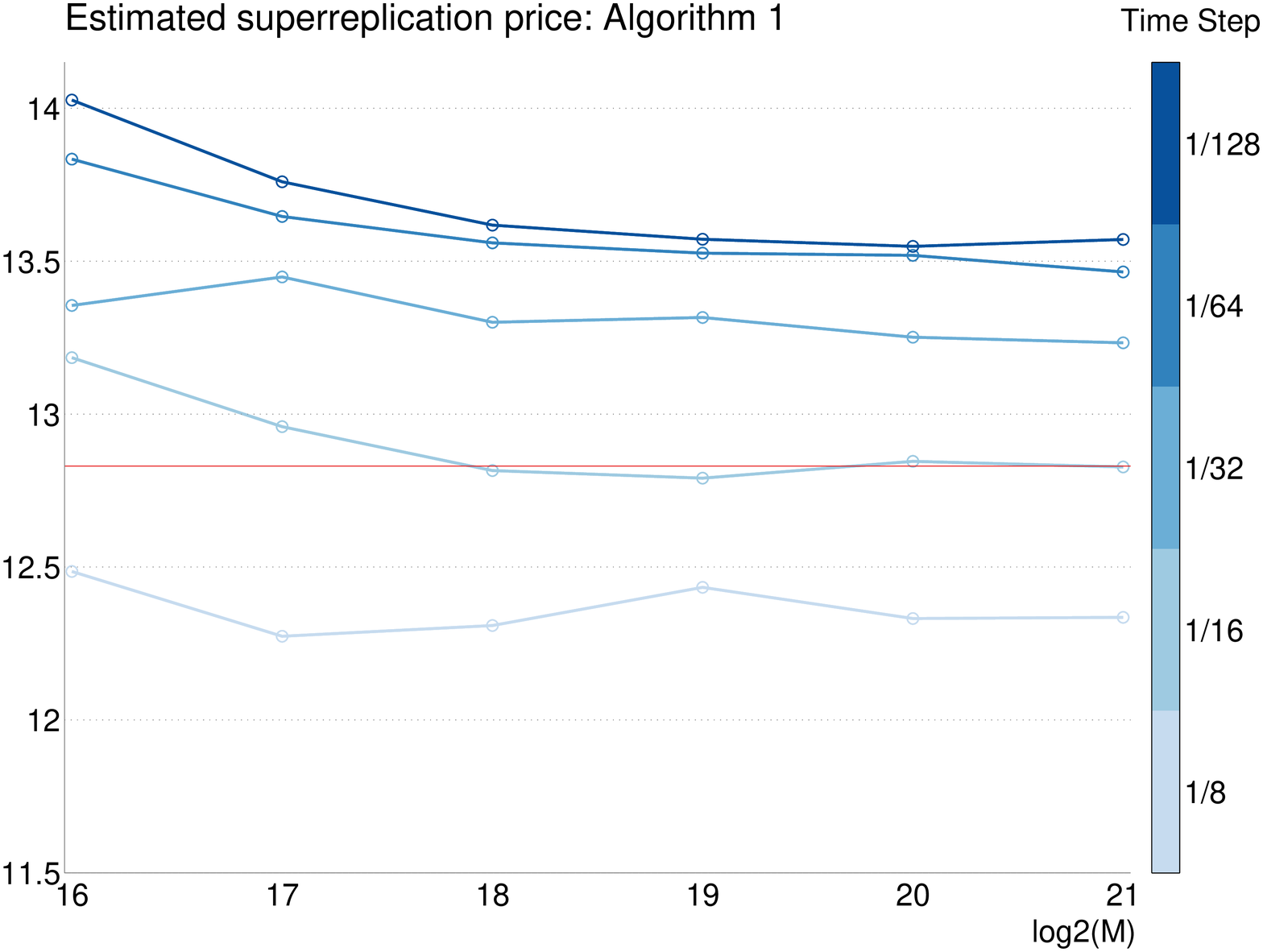}

}\subfloat[\label{fig:OutperformerSpreadCorrel2}Second Algorithm]{\includegraphics[width=0.4\paperwidth]{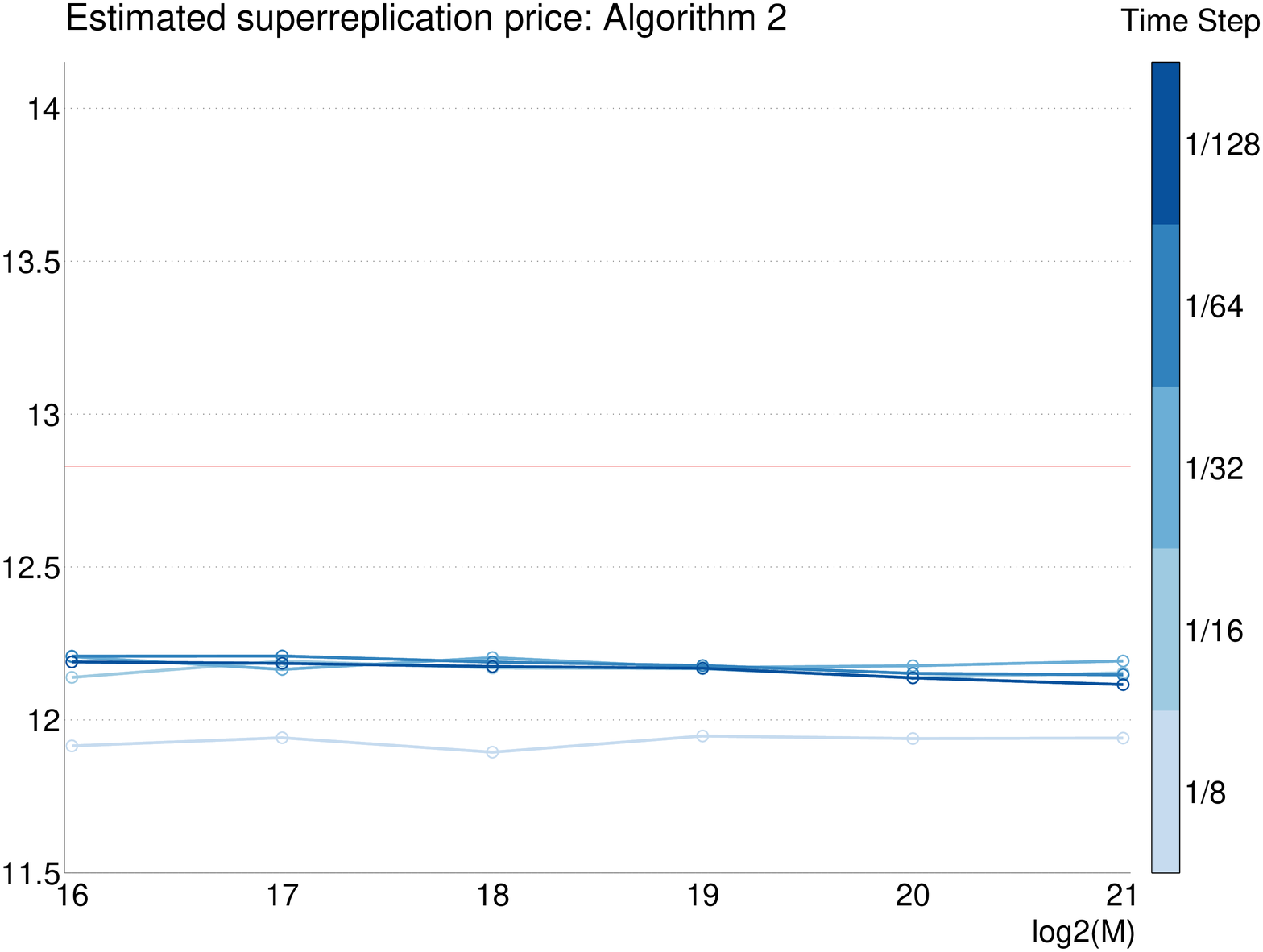}

}\caption{\label{fig:UVM-OPF-SPREAD-RHO}Price of Outperformer Spread Option
($\rho$ uncertain $\in\left[-0.5,0.5\right]$)}
\end{figure}

On this example, we observe a wide gap between the two estimates $13.57$
($+5.8\%$) and $12.12$ ($-5.6\%$) ($\left(1/\Delta_{t},\log_{2}\left(M\right)\right)=\left(128,21\right)$).
As neither the number of Monte Carlo simulation nor the discretization
time step seem able to narrow the gap, it means that it is due to
the chosen regression basis. Indeed, our basis is such that the optimal
volatilities and correlation are of bang-bang type, as in the previous
examples. However, unlike the previous examples, here both the volatilities
and the correlation are uncertain, and in this case it is known (cf.
\cite{Mrad08} for instance) that the optimum is not of a bang-bang
type. Therefore, one should look for a richer regression basis in
order to narrow the estimation gap on this specific example. Remark
however that the mid-estimated $12.84$ remains very accurate. On
the same example and with another regression basis, \cite{Guyon11}
manage to reach a price of $12.54$ for $\left(1/\Delta_{t},\log_{2}\left(M\right)\right)=\left(8,20\right)$.

To conclude these subsection, here are the differences we could notice
between the jump-constrained BSDE approach and the second-order BSDE
approach applied to the problem of pricing by simulation under uncertain
volatility model:
\begin{itemize}
\item Both are forward-backward schemes. Thus, the first step is to simulate
the forward process. At this stage the jump-constrained BSDE approach
is advantaged, because its forward process is a simple Markov process,
therefore easy to simulate. Its randomization of the control is fully
justified mathematically. On the contrary, the second-order BSDE requires
to resort to heuristics in order to simulate the forward process despite
the fact that the control is involved in its dynamics. \cite{Guyon11}
propose to use an arbitrary constant volatility (the mid-volatility)
to simulate the forward process, and they notice that the specific
choice of prior-volatility does impact substantially the resulting
estimates.
\item Then comes the estimation of the backward process. If both schemes
require to perform regressions, this step is more difficult in the
jump-constrained BSDE approach, because the dimensionality of the
regressions is higher as the state process contains the randomized
controls. In particular the choice of regression basis is more difficult.
\item On the set of options considered here and within the same range of
numerical parameters $M$ and $\Delta_{t}$ we could not detect any
significant and systematic difference between the two algorithms.
Nevertheless, we strongly suggest the following two points:
\item First, the second-order BSDE approach would strongly benefit from
the use of the variance reduction method from \cite{Alanko13}. It
would allow for smaller time steps to be considered, and therefore
allow for a sounder and more precise numerical comparison between
the two approaches. Indeed, the accurate estimates recorded in \cite{Guyon11}
for very large time steps may be, as in Figure \ref{fig:OutperformerSpreadCorrel1}
for $\Delta_{t}=1/16$, an incidental cancellation of biases of opposite
signs. The significant quantity is the level where the estimates converge
for small $\Delta_{t}$.
\item Second, to complement the downward biased, ``Longstaff-Schwartz like''
estimator considered in \cite{Guyon11}, we suggest the computation
of the upward biased, ``Tsitsiklis-van Roy like'' estimator, as
we did in this paper, as both estimators appear to be informative
in a complementary fashion, and the mid-estimator proposed here (which
requires both estimators) seems to perform staggeringly well.
\end{itemize}

\section{Conclusion\label{sec:Conclusion}}

We proposed in this paper a general probabilistic numerical algorithm,
combining Monte Carlo simulations and empirical regressions, which
is able to solve numerically very general HJB equations in high dimension. 
That includes general stochastic control problems with controlled volatility, possibly degenerate, but
more generally, it can solve any jump-constrained BSDE (\cite{Kharroubi12}).

We initiated a partial analysis of the theoretical error of the scheme,
and we provided several numerical application of the scheme on the
problem of pricing under uncertain volatility, the results of which
are very promising.

In the future, we would like to extend this work in the following
direction:
\begin{itemize}
\item First, we would like to manage to obtain a comprehensive analysis
of the error of the scheme, including the empirical regression step.
\item Then, we would like to perform a more systematic numerical comparison
with the alternative scheme described in \cite{Guyon11}, taking into
account our empirical findings.
\item Finally, we would like to extend the general methodology of control
randomization and subsequent constraint on resulting jumps to more
general problems, like HJB-Isaacs equations or even mean-fields games,
with possible advances on the numerical solution of such problems. 
\end{itemize}
\appendix
\bibliographystyle{abbrv}
\phantomsection\addcontentsline{toc}{section}{\refname}\bibliography{BiblioPHD.bib}

\end{document}